\documentclass[12pt]{amsart}

\usepackage{mathptmx, amscd, amssymb, amsmath, enumerate, slashbox, %hyperref,
float}

\usepackage{amsthm}
\usepackage{amsmath}
\usepackage{amssymb}
\usepackage[ansinew]{inputenc}
\usepackage{amscd}
\usepackage[ansinew]{inputenc}
\usepackage[mathscr]{euscript}
\usepackage{color, lipsum}
\usepackage[mathscr]{euscript}
\usepackage{alltt}

%\usepackage[backref]{hyperref}
%\usepackage[pagewise]{lineno}%\linenumbers
%%%%%%%%%%%%%Per a generar subseccions d'una manera més correcta%%%%%%%%%%%%%%%%%%%%%%%%%%%
%\makeatletter
%\def\subsection{\@startsection{subsection}{2}%
 % \z@{.5\linespacing\@plus.7\linespacing}{.3\linespacing}%
 % {\normalfont\bfseries}}
%\makeatother
%%%%%%%%%%%%%%%%%%%%%%%%%%%%%%%%%%%%%%%%%%%%%%%%%%%%%%%%%%%%%%%%%%%%%%%%%%%%%%%%%%%%%%%%%%%

%\usepackage{mathdesign}
%\usepackage{mathpazo}

\makeatletter
\newcommand{\verbatimfont}[1]{\def\verbatim@font{#1}}%
\makeatother

\oddsidemargin 0cm \evensidemargin 0cm \textwidth 16.5cm \textheight
22.5cm \topmargin -0.3cm \setlength{\headsep}{1 cm}

\numberwithin{equation}{section}

\newtheorem{thm}{Theorem}[section]
\newtheorem{cor}[thm]{Corollary}
\newtheorem{lem}[thm]{Lemma}
\newtheorem{prop}[thm]{Proposition}

\theoremstyle{definition}
\newtheorem{ex}[thm]{Example}

\theoremstyle{definition}
\newtheorem{defn}[thm]{Definition}

\theoremstyle{definition}

\theoremstyle{definition}
\newtheorem{question}[thm]{Question}

\theoremstyle{definition}
\newtheorem{rem}[thm]{Remark}

\theoremstyle{definition}

\def\C{\mathbb C}

\def\cZ{\mathcal Z}
\def\SS{\mathcal S}
\def\Z{\mathbb Z}
\def\N{\mathbb N}

\def\dim{\operatorname{dim}}

\def\supp{\operatorname{supp}}

\def\rank {\mathrm{rank}}

\def\O{\mathcal O}

\def\syz{\operatorname{syz}}

\def\m{\mathbf m}

\def\FF{\mathcal F}
\def\f{\overline f}

\def\TT{\mathcal T}

\def\I{\mathbf I}
\def\tL{\textnormal{\texttt L}}
\def\w{\mathbf w}

\def\SS{\mathcal S}
\def\R{\mathcal R}

\def\k{\mathbf k}

%%%%%%%%%%%%%%%%%%%%%%&&&&&&&&&&&&&&&&&
\newcommand{\Sym}{\operatorname{Sym}}
\newcommand{\Spec}{\operatorname{Spec}}
\newcommand{\Proj}{\operatorname{Proj}}
\newcommand{\Hom}{\operatorname{Hom}}
\newcommand{\fm}{\mathfrak{m}}
\newcommand{\fn}{\mathbf{n}}
\newcommand{\fp}{\mathfrak{p}}
\newcommand{\NN}{\mathbb{N}}
\newcommand{\ZZ}{\mathbb{Z}}
\newcommand{\Ass}{\operatorname{Ass}}
\newcommand{\Min}{\operatorname{Min}}

\def\f0{\mathbf{0}}

\def\geq{\geqslant}
\def\leq{\leqslant}
\def\gs{\geqslant}
\def\ls{\leqslant}
%%%%%%%%%%%%%%%%%%%%%%%%%%%

\def\*{{\color{red}\blacksquare}}
\def\+{\color{green}\blacksquare}

\subjclass[$2000$ Mathematics Subject Classification]{Primary 13B22; Secondary 13H15,  32S05}

\begin{document}

\title[Analytic spread and integral closure of integrally decomposable modules]{Analytic spread and integral closure of integrally \\ \vskip3pt decomposable modules}

%\title[Integral closure]{Bounds for quotients of multiplicities of monomial ideals and integral closure}

%The sequence of mixed \L ojasiewicz exponents
%The sequence $\LL^*_J(I)$ when $I$ and $J$ are monomial ideals
%Numerical invariants attached to pairs of monomial ideals

%----------Author 1
\author{Carles Bivi\`a-Ausina}
\address{
Institut Universitari de Matem\`atica Pura i Aplicada,
Universitat Polit\`ecnica de Val\`encia,
Cam\'i de Vera, s/n,
46022 Val\`encia, Spain}
\email{carbivia@mat.upv.es}

%----------Author 1
\author{Jonathan Monta\~no}
\address{
Department of Mathematical Sciences, New Mexico State University,
1290 Frenger Mall,
MSC 3MB / Science Hall 236,
Las Cruces, 88003-8001 New Mexico, USA }
\email{jmon@nmsu.edu }

\keywords{Integral closure of modules, analytic spread, Newton polyhedra, Rees algebra}

\thanks{The first author was partially supported by MICINN Grant PGC2018-094889-B-I00}

%\date{10 d'octubre de 2018}
%%%%%%%%%%%%%%%%%%%%%%%%%%%%%%%%%%%%%%%%%%%%%%%%%%%%%%%%%%%%%%%%%%%%Resum
\begin{abstract}
We relate the analytic spread of a module expressed as the direct sum of two submodules with the analytic spread of its components.
We also study a class of submodules
whose integral closure can be expressed in terms of the integral closure of its row ideals, and therefore can be obtained by means of a simple computer algebra procedure.
In particular, we analyze a class of modules, not necessarily of maximal rank, whose integral closure is determined by the
family of Newton polyhedra of their row ideals.
\end{abstract}

\maketitle
%%%%%%%%%%%%%%%%%%%%%%%%%%%%%%%%%%%%%%%%%%%%%%%%%%%%%%%%%%%%%%%%%%%%%%%%%%%%%%%%%%%%%%%%%%%%%%%%%%%%%%%%%%%%%%%%%%%%%%%%%%%%%%

%\tableofcontents

\section{Introduction}\label{intro}

%%%%%%%% Situación general del problema (en anillos Noetherianos locales en general), contexto, conexiones con otros asuntos y autores esenciales. Sin entrar en %muchos detalles.

Given an ideal $I$ in a Noetherian local ring $(R, \m)$, the notions of integral closure, reduction, analytic spread, and
multiplicity of $I$ %(this last concept requires the ideal $I$ to be $\m$-primary)
are fundamental objects of study in commutative algebra and algebraic geometry (see for instance \cite{HS, HIO, V}).
These notions have essential applications also in singularity theory mainly due to the works
of Lejeune and Teissier \cite{LT,Cargese, TeissierIM}. These applications concern the study of the equisingularity of deformations of hypersurfaces in $(\C^n,0)$ with isolated singularity at the origin.
The concept of integral closure of ideals was extended by Rees to modules (see \cite{Rees87}).
Moreover, the multiplicity of ideals was extended to modules
by Buchsbaum and Rim \cite{BRim} (see also Kirby \cite{Kirby}), thus leading to what is commonly known as Buchsbaum-Rim multiplicity of a submodule of $R^p$ of finite colength.

%\liniahor
%%%%%%%% Características generales de M barra, e(M) y l(M). El problema del cálculo de de M barra y de l(M) es no trivial. Esta es la raíz del asunto.

The integral closure and multiplicity of a submodule of a free module satisfy analogous properties as those satisfied by ideals.
For instance, they satisfy an analogous of the Rees' multiplicity theorem (see \cite{Katz} or \cite[Corollary 8.20]{V}). Moreover, when the residual field is infinite, the analytic spread of a submodule (see Definition \ref{defanspread}) also coincides with the minimum number of elements needed to generate a reduction of the submodule (see \cite{BUV,HS,V}). We also remark that, by the results of Gaffney \cite{Gaffney92, Gaffney96} the notion of integral closure of modules and Buchsbaum-Rim multiplicities have essential
applications to the study of the equisingularity of deformations of isolated complete intersection singularities. We also refer to \cite{GTW} for other applications in singularity theory.

In general, the computation of the analytic spread and the integral closure of a submodule is a non-trivial problem than can be approached from several points of view. %This problem is the main motivation of our study in this article.
%\liniahor
%%%%%%%% Al hilo de lo anterior, antecedentes en la dirección de lo que queremos hacer y motivación de lo que abordamos en este trabajo.
%There are some results dealing with the effective computation of $\ell(M)$ and $\overline M$, for a given submodule of $R^p$.
%For instance we cite the references [],[],[].
%\liniahor
%%%%%%%% Nuestra aportación a grandes rasgos y situación en el mapa.
Our objective in this work takes part of the general project of computing effectively the
analytic spread and the integral closure for certain classes of modules.
We relate the analytic spread of a module expressed as the direct sum of two submodules with the analytic spread of its components
(see Theorem \ref{mainAS} and Corollary \ref{cormainAS}). %Therefore we obtain an %extension of $\*$
%(Hyry,Lipman,Ulrich-Simis-Vasconcelos).
Moreover, we analyze a class of submodules $M\subseteq R^p$, that we call {\it integrally decomposable},
for which a generating system of $\overline M$
can be obtained by means of an easy computer algebra procedure once the integral closure of each row ideal $M_i$ is known
(Theorem \ref{WDcentral}).

%\liniahor
%%%%%%%% Estructura esencial del artículo, al mismo tiempo explicamos lo que hacemos (a vista de pájaro)
%%%%%%%% y con el lenguaje propio de una introducción.

%The article is organized as follows.
In Section \ref{preliminars} we recall briefly some fundamental facts about the integral closure of modules, analytic spread, reductions, Buchsbaum-Rim
multiplicity of submodules of a free module and Rees algebras that will be used in subsequent sections. In particular, we highlight the connection between the integral closure
of a module $M$ and the integral closure of the ideal generated by the minors of size $\rank(M)$ of $M$
(Theorem \ref{icm} and Corollary \ref{IrMh}).

%%%%%%%%%%%%%%%%%%%%%%%%%%%%%%%%%%%%%%%%%%%%%%%%%%%%%%%%%%%%%%%%%%%%%%%%%%%%%%%%%%%%%%%%%%%%%%%%%%%%%%%%%%%%%%%%%%%%%%%%%%%%%%%%%%%%%%%%
Section \ref{tasodm} is devoted to the study of the analytic spread of decomposable modules.  The main result of this section is Theorem \ref{mainAS}, where we relate $\ell(M \oplus N)$ with $\ell(M)$ and $\ell(N)$, and we derive a generalization of some results of
\cite{Hyr02} and \cite{Lipman} about the analytic spread of ideals (see also \cite[8.4.4]{HS}).
This result has required the study of
multi-graded Rees algebras and their corresponding multi-projective spectrum (see Subsections \ref{multigrad1} and \ref{multigrad2}).
As a corollary, given ideals $I_1,\dots, I_p$ of $R$, we prove that $\ell(I_1\oplus \cdots \oplus I_p)=\ell(I_1\cdots I_p)+p-1$ (see Corollary \ref{ideals}).

%%%%%%%%%%%%%%%%%%%%%%%%%%%%%%%%%%%%%%%%%%%%%%%%%%%%%%%%%%%%%%%%%%%%%%%%%%%%%%%%%%%%%%%%%%%%%%%%%%%%%%%%%%%%%%%%%%%%%%%%%%%%%%%%%%%%%%%%
In Section 4 we introduce the class of integrally decomposable modules $M\subseteq R^p$ (Definition \ref{WDdef}) and analyze their relation
with the condition $C(M)=\overline M$ (see Theorem \ref{WDcentral}), where $C(M)$ denotes the submodule of $R^p$ generated by
the elements $h\in \overline{M_1}\oplus \cdots \oplus \overline{M_p}$ such that $\rank(M)=\rank(M,h)$. In general we have that $\overline M\subseteq C(M)$.
If equality holds, then we obtain a substantial
simplification of the computation of $\overline M$, as can be seen in Examples \ref{exidcd} and \ref{exCzero}.

%%%%%%%%%%%%%%%%%%%%%%%%%%%%%%%%%%%%%%%%%%%%%%%%%%%%%%%%%%%%%%%%%%%%%%%%%%%%%%%%%%%%%%%%%%%%%%%%%%%%%%%%%%%%%%%%%%%%%%%%%%%%%%%%%%%%%%%%
We also extend the notion of Newton non-degenerate submodule of $\O_n^p$ introduced in \cite{BiviaJLMS} to the case where the rank of the module is not $p$.
These modules constitute a wide class of integrally decomposable submodules. We recall that $\O_n$ denotes the local ring
of analytic function germs $(\C^n,0)\to \C$.
%and have been our main motivation
%to introduce the notion of integrally decomposable submodule.
As a consequence of our study we show in Example \ref{deKod} an integrally closed and non-decomposable
submodule of $\O_2^2$ (see Definition \ref{decom}) whose ideal of maximal minors can be factorized as the product of two proper integrally closed ideals.
%, thus
%answering the question of Kodiyalam \cite[p.\,3572]{Kod} about the converse of Theorem 5.7 of \cite{Kod} (we refer to \cite{Hayasaka} for examples of a %different nature).

\section{Preliminaries: Rees algebras, analytic spread, and integral closure}\label{preliminars}

%The integral closure of modules can be characterized from several points of view.

Throughout this paper $R$ is a Noetherian ring and all $R$-modules are finitely generated.  An $R$-module $M$ has  a {\it rank} if there exists $e\in \NN$ such that $M_\fp\cong R^e_\fp$ for every $\fp$ associated prime of $R$. Equivalently, if $M\otimes_R Q(R)$ is a free $Q(R)$-module of rank $e$, where $Q(R)$ is the total ring of fractions of $R$. In this case we also say $M$ has rank $e$ ($\rank(M)=e$), and if $e>0$ we say $M$ has {\it positive rank}.  We note that an $R$-ideal $I$ has positive rank if it contains non-zero divisors. If $R$ is an integral domain, then $Q(R)$ is a field and hence every module over an integral domain has a rank.

From now on, whenever $M$ is a submodule of a free module $R^p$, we identify $M$ with a matrix of generators. In this case, we denote by $\I_i(M)$ the ideal of $R$ generated by the
$i\times i$ minors of $M$. If $i>p$, then we set $\I_i(M)=(0)$. We note that the ideals $\I_i(M)$ are independent of the matrix of generators chosen as they agree with the {\it Fitting ideals} of the module $R^p/M$ (see \cite[Section 2.2]{E}).
If $M$ has a rank, the maximum $i$ such that $\I_i(M)\otimes_R Q(R)\neq (0)$ coincides with $\rank(M)$.

If $M\subseteq R^p$ is a submodule, then for any $\tL\subseteq \{1,\dots, p\}$, $\tL\neq\emptyset$, we denote by $M_{\tL}$ the submodule of $R^{\vert \tL\vert}$
obtained by projecting the components of $M$ indexed by $\tL$, where $\vert \tL\vert$ is the cardinal of $\tL$.
In particular, we have $M_{\{i\}} = M_i$  for all $i=1,\dots, p$, where $M_i$ is the ideal of $R$ generated by the elements of the $i$-th row of any matrix of generators of $M$.
The ideals $M_1,\dots, M_p$ are called the {\it row ideals of $M$}. It is immediate to check that these ideals are independent of the chosen matrix of generators of $M$.

\begin{defn}\label{decom}
Let $M$ be a submodule of $R^p$. We say that $M$ is {\it decomposable} when $M=M_1\oplus \cdots \oplus M_p$. %Otherwise we say that
%$M$ is {\it indecomposable}. That is, $M$ is indecomposable when $M$ is strictly contained in $M_1\oplus \cdots \oplus M_p$.
%We say that $M$ is {\it monomial} when $M$ is decomposable and $M_1,\dots, M_p$ are monomial or zero ideals.
%We denote by $\ell(M)$ the analytic spread of a module $M$.
\end{defn}

\subsection{Rees algebras and the analytic spread}  In this subsection we include the definition and some of the  properties  of Rees algebras of modules. We also define the analytic spread of modules. For more details see \cite{EHU} and \cite{SUV}.

 Henceforth, we denote by $\Sym_R(M)$ the {\it symmetric algebra} of the $R$-module $M$, or simply $\Sym(M)$ when the base ring is clear. We also denote by $\tau_R(M)$ the {\it $R$-torsion} of $M$, i.e., $\tau_R(M)=\{x\in M\mid (0:_R x) \text{ contains non-zero divisors of } R\}$.

\begin{defn}
If $M$ has a rank, the {\it Rees algebra} of $M$ is defined as
$$\R(M):=\Sym(M)/\tau_R(\Sym(M)).$$
\end{defn}
The above definition coincides with the usual one for ideals, i.e., $\R(I)=R[It]=\oplus_{n\in \NN} I^nt^n$, although we note that the latter does not require the rank assumption.

\begin{rem}\label{torsionless}
Assume $M$ has a rank, then the natural map $\R(M)\to \R(M/\tau_R(M))$ is an isomorphism, i.e., $$\Sym(M)/\tau_R(\Sym(M))\cong \Sym(M/\tau_R(M))/\tau_R\big(\Sym(M/\tau_R(M))\big).$$
To see this, we note that since $\Sym(M/\tau_R(M))/\tau_R\big(\Sym(M/\tau_R(M))\big)$ is torsion-free, the kernel of the natural map $\varphi: \Sym(M)\to \Sym(M/\tau_R(M))/\tau_R\big(\Sym(M/\tau_R(M))\big)$ contains $\tau_R(\Sym(M))$. On the other hand, since $M$ has a rank, $M\otimes_R Q(R)$ is free and then $\varphi\otimes_R Q(R)$ is an isomorphism. Thus,  $\ker(\varphi)$ has rank zero which is equivalent to being contained in $\tau_R(\Sym(M))$.

We also note that $M/\tau_R(M)$ is a torsion-free module with a rank, then it is contained in a free $R$-module. The latter implies that when dealing with  the Rees algebra of a module  with a rank, one may always assume it is contained in a free module.
\end{rem}
\begin{rem}\label{subFree1}
Assume $M$ has a rank and $M/\tau_R(M)\subseteq F$ for a free $R$-module $F\cong R^{r}$, then $\R(M)$ is isomorphic to the image of the map $\Sym(M)\xrightarrow{\alpha} \Sym(F)\cong R[t_1,\ldots, t_r]$.%, for some variables $t_1,\ldots, t_r$. %(\cite{EHU}).
%To see this, let $Q$ be the total ring of fractions of $R$.  We note that because of  the rank assumption $M\otimes_R Q$ is a free $Q$-module. Thus, $\alpha\otimes_R Q$ is injective. The latter implies $\ker \alpha$ is an $R$-torsion submodule of $\Sym(M)$. On the other hand, since $\Sym(F)$ is $R$-free, the $R$-torsion of $\Sym(M)$ must be contained in $\ker \alpha$.
\end{rem}

In the following proposition we recall some facts about the dimension and associated primes of Rees algebras. %The proofs of these facts are analogous to those of Rees algebras of ideals. %We need to introduce some notation first.
 Following the notation from Remark \ref{subFree1}, let $\TT := R[t_{1},\ldots, t_r]$. For  any $I \in \Spec R$ we denote by $I'$ the $\R$-ideal $I \TT\cap \R(M)$.

\begin{prop}\label{info}
Let $M$ be an $R$-module that has a rank. Then
\begin{enumerate}
\item[$(1)$] $\Min(\R(M)) = \{P'\mid P\in \Min(R)\}$ and $\Ass(\R(M))=\{P'\mid P\in \Ass(R)\}$.
\item[$(2)$] $\dim \R(M) = \dim R+\rank (M)$.
\end{enumerate}
\end{prop}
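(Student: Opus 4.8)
The plan is to reduce to the classical case of Rees algebras of ideals contained in free modules, using Remark \ref{torsionless} and Remark \ref{subFree1}. First I would replace $M$ by $M/\tau_R(M)$, which by Remark \ref{torsionless} changes neither $\R(M)$ nor $\rank(M)$, so we may assume $M\subseteq F\cong R^r$ is torsion-free of rank $e$, and by Remark \ref{subFree1} that $\R(M)$ is the image of $\Sym(M)\xrightarrow{\alpha}\Sym(F)=\TT=R[t_1,\dots,t_r]$. Thus $\R(M)$ is a subalgebra of the polynomial ring $\TT$ over $R$, generated over $R$ by the finitely many linear forms $\sum_j a_{ij}t_j$ corresponding to a generating matrix $(a_{ij})$ of $M$.

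For part $(1)$: since $\R(M)\subseteq\TT$ and $\TT$ is a free, hence faithfully flat, $R$-module, $\TT$ contains no embedded primes coming from nowhere and the minimal (resp.\ associated) primes of $\TT$ are exactly $P\TT$ for $P\in\Min(R)$ (resp.\ $P\in\Ass(R)$); this is the standard behavior of polynomial extensions. Contracting to $\R(M)$, the primes $P'=P\TT\cap\R(M)$ are the minimal (resp.\ associated) primes of $\R(M)$, provided one checks that contraction from $\TT$ to $\R(M)$ gives a bijection on these particular primes. The key point here is that $\R(M)\hookrightarrow\TT$ becomes an isomorphism after inverting the nonzerodivisors of $R$ — because $M\otimes_R Q(R)$ is free of rank $e$ and $\alpha\otimes Q(R)$ identifies $\Sym(M)\otimes Q(R)$ with a polynomial ring $Q(R)[t_1,\dots,t_e]\subseteq Q(R)[t_1,\dots,t_r]$, which is already torsion-free — so $\TT$ is integral, indeed finite as we localize, over... more precisely $\TT/\R(M)$ is a torsion $R$-module. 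Hence for any $P\in\Ass(R)$ the extension $\R(M)_P\to\TT_P$ is an equality after tensoring with $Q(R/P\text{-part})$, which forces $\Ass$ and $\Min$ of the two algebras to match up under contraction along $P\mapsto P'$; one also uses that distinct $P,P'\in\Ass(R)$ stay distinct after contraction because they already differ in $R\subseteq\R(M)$.

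For part $(2)$: with $M\subseteq F=R^r$ torsion-free of rank $e$, we have $\dim\R(M)=\sup_P\dim\R(M)/P'$ over $P\in\Min(R)$ by part $(1)$. Modding out by $P'$ we may assume $R$ is a domain with fraction field $K=Q(R)$. Then $\R(M)$ is a domain, and $\R(M)\otimes_R K\cong K[t_1,\dots,t_e]$ is a polynomial ring in $e=\rank(M)$ variables, so $\R(M)$ has transcendence degree $e$ over $R$; combined with the fact that $\R(M)$ is a finitely generated $R$-algebra that is a domain, the dimension formula $\dim\R(M)=\dim R+\operatorname{trdeg}_{Q(R)}(\R(M)\otimes_R Q(R))=\dim R+e$ follows from the standard dimension theory of finitely generated algebras over Noetherian rings (as in \cite{E}), once one notes $\R(M)\otimes_R K$ has dimension $e$ and the generic fiber computation propagates. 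Taking the supremum over $P\in\Min(R)$ and noting $\dim R=\sup\dim R/P$ gives $\dim\R(M)=\dim R+\rank(M)$.

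The main obstacle I expect is part $(1)$: carefully controlling the contraction of primes along $\R(M)\hookrightarrow\TT$ when $R$ is not a domain. One must use that $\TT/\R(M)$ is an $R$-torsion module (so both algebras have the same minimal/associated primes over each $P\in\Ass(R)$), and that the localization at the zero-divisors recovers the free situation; handling possible embedded behavior and verifying $P\mapsto P'$ is injective on $\Ass(R)$ are the delicate points. This material is essentially contained in \cite{EHU} and \cite{SUV}, and I would cite those for the technical lemmas on associated primes of Rees algebras of modules rather than reprove them here.
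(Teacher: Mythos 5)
The paper's own ``proof'' of this proposition is just a citation to \cite[Section 15.4]{Matsumura} and \cite[2.2]{SUV}, so your closing suggestion to cite \cite{SUV}/\cite{EHU} for the technical lemmas is in line with the paper; however, the parts of the argument you actually wrote contain genuine gaps. In part $(1)$, the claim that $\TT/\R(M)$ is a torsion $R$-module is false whenever $e=\rank(M)<r$: as your own sentence shows, $\R(M)\otimes_R Q(R)\cong Q(R)[t_1,\dots,t_e]$ is then a \emph{proper} subring of $Q(R)[t_1,\dots,t_r]=\TT\otimes_R Q(R)$, so the cokernel has positive rank. This is repairable: a finitely generated torsion-free module of rank $e$ embeds into $R^{e}$ (clear denominators in $M\hookrightarrow M\otimes_R Q(R)\cong Q(R)^{e}$), and with that choice $\R(M)_W=\TT_W$, where $W$ is the set of nonzerodivisors of $R$. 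But even then, ``torsion cokernel forces $\Ass$ and $\Min$ to match up under contraction'' is not a valid inference for an arbitrary ring inclusion. What makes it work is that $\R(M)$ is a torsion-free $R$-module, so for every $Q\in\Ass(\R(M))$ the contraction $Q\cap R$ consists of zerodivisors of $R$; hence $Q$ survives localization at $W$, where one can compare with $\Ass(\TT_W)$, and one also needs that $P\TT$ is $W$-saturated (true because $P\in\Ass(R)$ is disjoint from $W$) to see that the bijection is given by $P\mapsto P'=P\TT\cap\R(M)$. None of this mechanism appears in your sketch, and it is the actual content of the statement.

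In part $(2)$, the formula $\dim B=\dim A+\operatorname{trdeg}_{Q(A)}Q(B)$ for a finitely generated domain $B$ over a Noetherian domain $A$ is false at this level of generality: take $A$ a DVR with uniformizer $\pi$ and $B=A[1/\pi]=Q(A)$, a finitely generated $A$-algebra domain with transcendence degree $0$ but $\dim B=0\neq \dim A$. What is available for arbitrary Noetherian $R$ is only the dimension \emph{inequality} (Matsumura, Thm.~15.5), which gives the upper bound $\dim\R(M)\leq \dim R+e$; the lower bound requires a separate argument that exploits the graded structure (a chain lifted from a maximal chain of primes of $R$ combined with the $e$-dimensional generic or special fiber of $\R(M)$), and this is precisely what \cite[2.2]{SUV} supplies. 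Also, your reduction ``mod out by $P'$ and assume $R$ is a domain'' silently uses $\R(M)/P'\cong\R\big(M(R/P)\big)$, which deserves justification. So either carry out these repairs explicitly or, as the paper does, simply cite \cite[Section 15.4]{Matsumura} and \cite[2.2]{SUV}; as written, the proposal does not constitute a proof.
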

\begin{proof}
See \cite[Section 15.4]{Matsumura} and \cite[2.2]{SUV}.
\end{proof}

We are now ready to define the analytic spread.
\begin{defn}\label{defanspread}
Assume $(R,\fm,k)$ is local and $M$ is an $R$-module having a rank. The {\it fiber cone} of $M$ is defined as
$\FF(M):=\R(M)\otimes_R k.$
The {\it analytic spread} of $M$ is then $\ell(M):=\dim \FF(M).$
\end{defn}

The following proposition will be needed in several of our arguments.

\begin{prop}[{\cite[2.3]{SUV}}]\label{posAS}
Let $M$ be an $R$-module having a rank. Then $$\rank (M) \ls \ell(M) \ls \dim R + \rank (M) -1 .$$
\end{prop}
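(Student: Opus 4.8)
The plan is to establish the two inequalities $\rank(M) \ls \ell(M)$ and $\ell(M) \ls \dim R + \rank(M) - 1$ separately, in both cases passing through the Rees algebra $\R(M)$ and its fiber cone $\FF(M) = \R(M) \otimes_R k$. For the lower bound, I would argue that the fiber cone surjects onto (a quotient isomorphic to) a polynomial ring in $\rank(M)$ variables over $k$. Concretely, using Remark \ref{torsionless} and Remark \ref{subFree1}, after replacing $M$ by $M/\tau_R(M)$ we may assume $M \subseteq F \cong R^r$ with $r = \rank(M)$, and $\R(M)$ is the image of $\Sym(M) \to \Sym(F) \cong R[t_1,\dots,t_r]$; localizing at a minimal prime of $R$ (equivalently tensoring with $Q(R)$) shows the image has dimension $\dim R + r$ as in Proposition \ref{info}(2). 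To get the fiber-cone statement I would instead invoke that $\R(M)$ is a standard graded $R$-algebra with $\R(M)_0 = R$ and $\R(M)_1 = M$, so $\FF(M)$ is a standard graded $k$-algebra whose dimension equals the analytic spread; the lower bound $\dim \FF(M) \gs \rank(M)$ should follow because the generic rank $r$ forces $r$ algebraically independent elements in the fiber cone — more carefully, one can localize and complete, choose a minimal reduction setup, or simply note that $\FF(M)$ contains a polynomial subring in $r$ variables coming from a free direct summand after a faithfully flat base change to make $k$ infinite.

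For the upper bound $\ell(M) \ls \dim R + \rank(M) - 1$, the key point is that $\FF(M)$ is a proper homogeneous quotient of $\R(M)$ — namely $\FF(M) = \R(M)/\fm\R(M)$ — and $\R(M)$ has dimension $\dim R + \rank(M)$ by Proposition \ref{info}(2). The naive bound from quotienting by one "level" of the maximal ideal only gives $\ell(M) \ls \dim R + \rank(M)$; to sharpen it by one I would use that $\R(M)$ is graded with $\R(M)_+$ containing a non-zerodivisor modulo the minimal primes, or more precisely that the irrelevant ideal $\R(M)_{++}$ (the ideal generated by elements of positive $t$-degree) meets every minimal prime of $\R(M)$ in a proper way. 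Since $\FF(M)$ is the quotient of $\R(M)$ by $\fm\R(M)$, which is contained in the degree-zero-plus-lower part, and since $\R(M)$ is not equal to $R$ (as $\rank(M) \gs 1$... or treating $\rank(M)=0$ trivially), one gets that $\dim \FF(M) < \dim \R(M)$ by a standard argument: the fiber cone sits inside $\R(M) \otimes_R k$ where we have killed a non-nilpotent element. I would formalize this via the associated primes description in Proposition \ref{info}(1): every associated prime of $\R(M)$ is of the form $P'$ for $P \in \Ass(R)$, and none of these contains $\R(M)_+$ (since $\R(M)/P' \cong \R(M/\text{stuff})$ over $R/P$ still has positive-degree part), so $\R(M)_+$ contains an element that is a non-zerodivisor on $\R(M)$; modding out by $\fm\R(M) \supseteq$ nothing forces a drop, but combined with the graded structure one deduces $\dim \FF(M) \ls \dim \R(M) - 1$.

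The main obstacle I anticipate is making the "$-1$" in the upper bound rigorous: the drop in dimension when passing from $\R(M)$ to $\FF(M) = \R(M)/\fm\R(M)$ is not automatic from Krull dimension bookkeeping alone, and one must genuinely use that $\R(M)$ is generated in degree one over $R$ together with the fact that $M$ has positive rank so that $\R(M)$ has Krull dimension strictly larger than $\dim R$. The cleanest route is probably: reduce to $M$ torsion-free inside $R^r$; then $\R(M) \subseteq R[t_1,\dots,t_r]$ is a graded domain-like object whose fraction field over each minimal prime has transcendence degree $r$; the fiber cone is the special fiber of the map $\Spec \R(M) \to \Spec R$, whose generic fiber has dimension $r$ and whose special fiber over the closed point has dimension $\ell(M)$; since the total space has dimension $\dim R + r$ and the morphism to $\Spec R$ is "dominant with $r$-dimensional generic fiber," semicontinuity of fiber dimension together with the irreducibility/equidimensionality from Proposition \ref{info} yields $\ell(M) \ls \dim R + r - 1$ — the strict inequality coming precisely from the generic fiber already accounting for $r$ of the $\dim R + r$ dimensions while the base contributes the remaining $\dim R$, leaving at most $\dim R - 1$ "extra" dimension in any special fiber beyond the $r$ it must contain. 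For the lower bound the obstacle is milder but one should be careful that $\ell(M) \gs \rank(M)$ even without assuming $k$ infinite; a faithfully flat extension $R \to R[x]_{\fm R[x]}$ resolves this since it changes neither $\rank(M)$ nor $\ell(M)$.
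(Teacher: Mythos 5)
Your attempt must stand on its own here, since the paper gives no argument for this proposition but simply cites \cite[2.3]{SUV}; judged that way, both halves have gaps, the serious one being the upper bound. You correctly reduce everything to comparing $\dim \FF(M)$ with $\dim\R(M)=\dim R+\rank(M)$ (Proposition \ref{info}(2)), but the ``$-1$'' is never actually obtained. The ideal being killed is $\fm\R(M)$, not $\R(M)_+$: producing a non-zerodivisor in $\R(M)_+$ from $\Ass(\R(M))=\{P'\mid P\in\Ass(R)\}$ is true but irrelevant, because such an element need not lie in $\fm\R(M)$, and ``semicontinuity of fiber dimension'' only says special fibers can jump, so it yields no upper bound at all. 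The missing step is short: any prime $Q\supseteq \fm\R(M)$ satisfies $Q\cap R=\fm$, whereas by Proposition \ref{info}(1) every minimal prime of $\R(M)$ contracts to a minimal prime of $R$; hence, provided $\dim R>0$, no such $Q$ is a minimal prime of $\R(M)$, so $\height Q\gs 1$ and therefore $\ell(M)=\dim \R(M)/\fm\R(M)\ls \dim\R(M)-\height Q\ls \dim R+\rank(M)-1$. Note that $\dim R>0$ is genuinely needed and is a hypothesis in \cite[2.3]{SUV}: for a free module over an Artinian local ring one has $\ell(M)=\rank(M)>\dim R+\rank(M)-1$, so neither the inequality nor your ``treat $\rank(M)=0$ trivially'' aside survives $\dim R=0$.

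For the lower bound, the mechanism you actually describe is incorrect: enlarging the residue field does not give $M$ a free direct summand (e.g.\ $M=\fm\subseteq k[[x,y]]$ is torsion-free of rank one, hence indecomposable and non-free, and stays so after the extension), and the natural map $\FF(M)\to \Sym(R^r)\otimes_R k\cong k[t_1,\dots,t_r]$ induced by $M\subseteq R^r$ has image generated by the residues of generators of $M$ modulo $\fm R^r$, which can be just $k$ (again $M=\fm\subseteq R$); so $\FF(M)$ neither contains nor surjects onto a polynomial ring in $\rank(M)$ variables in general. The route you only name is the right one and is immediate from the paper's Remark \ref{sameRank}: after passing to $R[x]_{\fm R[x]}$ (which changes neither $\rank(M)$ nor $\ell(M)$), a minimal reduction $U\subseteq M$ is generated by exactly $\ell(M)$ elements (Remark \ref{sameRank}(2)) and satisfies $\rank(U)=\rank(M)$ (Remark \ref{sameRank}(3)); since a module generated by $\ell(M)$ elements has rank at most $\ell(M)$, this gives $\rank(M)\ls\ell(M)$. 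As written, however, neither inequality is established: the upper bound lacks its key step and the lower bound rests on a false claim with the correct alternative left unexecuted.
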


\subsection{Integral closure of modules }
In this subsection we include the definition of integral closure of modules and some basic  properties of it. We restrict ourselves to the case of  torsion-free modules with a rank. %, in particular for rings
%over Noetherian integral domains.
For more details see \cite[Chapter 16]{HS} and \cite[Chapter 8]{V}.

\begin{defn}[{Rees \cite{Rees87}}]\label{theDefn}
Let $R$ be a Noetherian ring and let $M$ be a submodule of $R^p$.
\begin{enumerate}
\item[$(1)$] The element $h\in R^p$ is {\it integral} over $M$ if for every minimal prime $\fp$ of $R$ and every discrete valuation ring (DVR) or field $V$ between $R/\fp$ and $R_\fp/\fp R_\fp$, the image $hV$ of  $h$ in $V^p$ is in the image $MV$  of the composition of $R$-maps
$M\hookrightarrow R^p\to  V^p$  (see \cite[16.4.9]{HS}).
\item[$(2)$] The {\it integral closure} of $M$ in $R^p$ is defined as $\overline{M}:=\{h\in R^p: h \textnormal{ is integral over }M\},$ which is a  submodule of $R^p$.
If $M=\overline{M}$, we say $M$ is {\it integrally closed}. We note that if $M\subseteq R$ is an ideal, then the integral closure of $M$ as a module coincides with that as an ideal (see \cite[6.8.3]{HS}).

\item[$(3)$]  Assume $M$ has a rank. A submodule $U\subseteq M$ having a rank is a {\it reduction} of $M$ if $M\subseteq \overline{U}$. As shown in \cite{Rees87} (see also \cite[16.2.3]{HS}), this is equivalent to $\R(M)$ being integral over the subalgebra generated by the image of $U$. The latter condition is in turn equivalent to $[\R(M)]_{n+1}=U[\R(M)]_n$ for $n\gg 0$, where $U$ is identified with its image in $[\R(M)]_1$.  A reduction is {\it minimal} if it does not  properly contain any other reduction of $M$.
\end{enumerate}
\end{defn}

\begin{rem}\label{sameRank}
Let $M\subset R^p$ be a submodule having a rank, then
\begin{enumerate}
\item[$(1)$] $\overline{M}=[\overline{\R(M)}]_1$, where $\overline{\R(M)}$ is the integral closure of $\R(M)$ in $\Sym(R^p)$ (cf. \cite[5.2.1]{HS}).

\item[$(2)$] If $R$ is local, then for every reduction $U$ of $M$ we have
 $\mu(U)\gs \ell(M)$,
 where $\mu(-)$ denotes the minimal number of generators. Moreover, if $R$ has infinite residue field then every minimal reduction is generated by exactly $\ell(M)$ elements.

\item[$(3)$]  It is clear from the definition that free modules  $R^q\subseteq R^p$ are integrally closed. %Therefore,  $\overline{M}$ has a rank.
Moreover, if $U\subseteq M$ is a reduction, then $\rank(U)=\rank(M)$ (see \cite[p.\,416]{V}). In particular, $\rank(M)=\rank(\overline{M})$.
 \end{enumerate}
\end{rem}

The integral closure of modules admits several characterizations. The following theorem relates the integral closure of modules with the integral closure of ideals. %Although this seems like an essential property,
 As far as the authors are aware, this result had not appeared in the literature in this generality (see \cite[1.7]{Gaffney92}, \cite[16.3.2]{HS}, \cite[1.2]{Rees87}, \cite[8.66]{V} for related statements).

\begin{thm}\label{icm}
Let $R$ be a Noetherian ring and  $M\subseteq R^p$ a submodule having a rank. %with rank $r$.
Let $h\in R^p$ be such that $M+Rh$ also has a rank and $\rank(M)=\rank(M+Rh)$. Then the following conditions are equivalent.

\begin{enumerate}
\item[$(1)$] $h\in\overline M$.
\item[$(2)$] $\overline {\I_i(M)}=\overline{\I_i(M+R h)}$, for all $i\geq 1$.
\item[$(3)$] $\overline {\I_r(M)}=\overline{\I_r(M+R h)}$, for $r=\rank(M)$.
\end{enumerate}

%equivalent:

%The following conditions are
%equivalent:
%\begin{enumerate}
%\item[(a)] $h\in\overline M$;
%\item[(b)] $\I_r(M\vert h)\subseteq \overline {\I_r(M)}$, for all $r\geq 1$.
%\item[(c)] $\I_r(M\vert h)\subseteq \overline {\I_r(M)}$, where $r$ is the
%largest integer such that $\I_r(h,M)\neq 0$.
%\item[(c)] for each choice of generators $u^1,\dots, u^s$ of $M$,
%there exists a neighbourhood $U$ of $0$ in $\C^n$ and a constant
%$C>0$ such that, for all
%$\varphi\in\Gamma(\textnormal{Hom}(\C^p,\C))$, the following
%holds: $$\Vert \varphi(z)\cdot h(z)\Vert\leq C \sup_i \Vert
%\varphi(z) \cdot u^i(z) \Vert, \,\, \textnormal{for all $z\in U.$}
%$$
%\item[(d)] There exists a non-zero ideal $I$ of $\O_n$ such that $Ih\subseteq IM$.
%\end{enumerate}
\end{thm}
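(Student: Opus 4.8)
The plan is to prove the chain of implications $(1)\Rightarrow(2)\Rightarrow(3)\Rightarrow(1)$, using the valuative criterion from Definition~\ref{theDefn}(1) together with the fact that over a DVR or field every finitely generated torsion-free module is free, which reduces module-theoretic statements to the classical theory of ideals. The implication $(2)\Rightarrow(3)$ is trivial since it is just a specialization of the statement to $r=\rank(M)$. For $(1)\Rightarrow(2)$, I would argue valuation-theoretically: fix a minimal prime $\fp$ of $R$ and a DVR or field $V$ with $R/\fp\subseteq V\subseteq R_\fp/\fp R_\fp$. Then $MV\subseteq V^p$ is generated by the images of the columns of the matrix, and since $h\in\overline M$ we have $hV\in MV$. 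Consequently the matrix obtained by appending $h$ to $M$ has the same image as $M$ over $V$, so for every $i$ the ideal $\I_i(M)V$ and $\I_i(M+Rh)V$ coincide (the minors of a matrix and of the same matrix with a column in the $V$-span of the others generate the same $V$-ideal — this is the elementary fact that adding a dependent column does not change the Fitting ideals). Since this holds for all such $V$ and all minimal primes $\fp$, the valuative criterion for ideals (\cite[6.8.2]{HS} or the analogous statement via \cite{Rees87}) gives $\overline{\I_i(M+Rh)}\subseteq\overline{\I_i(M)}$; the reverse inclusion is automatic from $M\subseteq M+Rh$. Hence $\overline{\I_i(M)}=\overline{\I_i(M+Rh)}$ for all $i$.

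The substantive direction is $(3)\Rightarrow(1)$. Here I would again use the valuative criterion: to show $h\in\overline M$ it suffices to fix a minimal prime $\fp$ and a DVR or field $V$ between $R/\fp$ and $R_\fp/\fp R_\fp$ and prove $hV\in MV$. If $V$ is a field this is immediate because then $MV$ is a subspace of $V^p$ of dimension $\rank(M)$ (since $\I_r(M)V\neq 0$ while $\I_{r+1}(M)V=0$, using $\rank(M)=\rank(M+Rh)$), and the hypothesis $\overline{\I_r(M)}=\overline{\I_r(M+Rh)}$ forces $\I_r(M)V=\I_r(M+Rh)V$, which in a field means the column space does not grow when we adjoin $h$, i.e. $hV$ is a $V$-linear combination of the columns of $M$. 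So assume $V$ is a DVR with maximal ideal $(\pi)$ and fraction field $L$. Over $L$ the module $M_L:=MV\otimes_V L$ is an $r$-dimensional subspace of $L^p$ containing $hL$ (by the field case applied to $L$, or directly since ranks agree). Choose $r$ columns of $M$ forming an $L$-basis of $M_L$; after clearing denominators and using that $V$ is a PID, I can replace $M$ by a $p\times r$ submatrix $M'$ of generators whose columns are a $V$-basis of the free $V$-module $MV$ (here I use that over a DVR the image $MV$ is a finitely generated torsion-free, hence free, $V$-module of rank $r$). Now $\I_r(M')V=\I_r(M)V$ is a principal ideal $(\delta)$ with $\delta\neq 0$, and $\I_r(M'+Vh)V=\I_r(M+Rh)V$; by hypothesis these have the same integral closure, hence — both being nonzero ideals of the DVR $V$ — the same $\pi$-adic order, so $\I_r(M'+Vh)V=(\delta)$ as well. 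The key linear-algebra lemma is then: if $M'$ is a $p\times r$ matrix over a DVR $V$ whose columns are a basis of a free submodule $W\subseteq V^p$, and $h\in V^p$ satisfies $\I_r(M'\mid h)V=\I_r(M')V$, then $h\in W$. This follows by writing $h=\sum_j c_j v_j + h_0$ with $v_j$ the columns of $M'$, $c_j\in L$, and $h_0$ in a complementary summand — if some $c_j\notin V$ or $h_0\neq 0$ then an $r\times r$ minor of $(M'\mid h)$ picks up a strictly smaller order than $\delta$, contradicting equality of the ideals. Assembling this over all $V$ and all minimal primes gives $h\in\overline M$.

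I expect the main obstacle to be the careful handling of the DVR case in $(3)\Rightarrow(1)$, specifically the reduction to a matrix whose columns are a $V$-basis of $MV$ and the orders-of-minors argument that equality of $\I_r$-ideals (up to integral closure, i.e. up to radical-equality in the DVR, i.e. equal $\pi$-adic order) forces $h$ into the $V$-span. One has to be mindful that "reduction of the generating matrix over $V$" is harmless for the integral-closure statement because passing to $V^p$ only uses the $V$-module generated by the columns, not the matrix itself; and that the hypothesis is only about $\I_r$, not all $\I_i$, which is exactly why the rank condition $\rank(M)=\rank(M+Rh)$ is essential (it guarantees $\I_{r+1}(M+Rh)\otimes Q(R)=0$, hence $\I_{r+1}(M+Rh)V=0$ for the relevant $V$, so that no higher minors can obstruct $h$ lying in $MV$). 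A clean way to package the DVR computation is via the theory of the ideal of maximal minors: over the DVR $V$, one has $\I_r(M'\mid h)V = \I_r(M')V + (\,$entries of $h$ times cofactors$\,)$, and Cramer's rule translates "same ideal'' into "$h$ is in the $V$-span of the columns of $M'$,'' which is the desired conclusion. Throughout, I would cite \cite[6.8.2]{HS}, \cite[6.8.3]{HS} for the valuative criterion for integral closure of ideals and its compatibility between the module and ideal notions, and \cite[Section 2.2]{E} for the Fitting-ideal (generator-independence) facts already recalled in the preliminaries.
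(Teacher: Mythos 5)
Your route is the same as the paper's: $(1)\Rightarrow(2)$ by the valuative criterion (over each $V$ the appended column is a $V$-combination of the others, so all Fitting ideals agree), $(2)\Rightarrow(3)$ trivially, and $(3)\Rightarrow(1)$ by fixing a minimal prime $\fp$ and a DVR or field $V$ between $R/\fp$ and $R_\fp/\fp R_\fp$, transferring the hypothesis to the equality $\I_r(M)V=\I_r(M+Rh)V$ (ideals of $V$ being integrally closed), and then running a Cramer's-rule computation to conclude $hV\in MV$. The only difference is packaging: the paper isolates the determinantal step as Lemma \ref{Cramer} over the domain $R/\fp$, namely $\I_r(M)h\subseteq \I_r(M+Rh)M$, and then cancels the nonzero principal ideal $\I_r(M)V$ in $V$, while you choose $r$ columns of $M$ forming a $V$-basis of $MV$ (this exists by Nakayama over the local ring $V$, which is the correct justification rather than ``clearing denominators'') and compare orders of minors; these are the same mechanism.

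There is, however, one genuine misstep: your ``key linear-algebra lemma'' is false as stated, and the branch of its proof claiming that $h_0\neq 0$ produces an $r\times r$ minor of $[M'\mid h]$ of order strictly smaller than $\ord(\delta)$ is wrong. For example, over a DVR $V$ with uniformizer $\pi$, take $p=2$, $r=1$, $M'=[\,\pi\ \ 0\,]^T$ and $h=[\,0\ \ \pi\,]^T$: then $\I_1([M'\mid h])=(\pi)=\I_1(M')$, yet $h\notin W$. Equality of the ideals of maximal minors alone cannot detect a component of $h$ outside $W\otimes_V L$; what rules that out is the vanishing of the $(r+1)$-minors of $[M'\mid h]$, i.e.\ the hypothesis $\rank(M)=\rank(M+Rh)$. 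Since you had already derived $h\in M_L$ from that hypothesis just before invoking the lemma, the case you actually need is $h=\sum_j c_jv_j$ with $c_j\in L$, where the expansion of a minor using column $h$ in place of $v_j$ on a row set $T_0$ of minimal order gives $\pm c_jm_{T_0}\in(\delta)$ and hence $c_j\in V$; so the defect is local and is repaired by adding the hypothesis $h\in W\otimes_V L$ (equivalently $\I_{r+1}([M'\mid h])=0$) to the lemma and deleting the $h_0$ branch. With that correction your argument is complete and coincides in substance with the paper's proof, whose formulation via Lemma \ref{Cramer} plus cancellation avoids the basis choice and the case distinction altogether.
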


For the proof of the theorem we need the following lemma whose proof is essentially the same as \cite[1.6]{Gaffney92}. We include here the details for completeness.

\begin{lem}\label{Cramer}
Let $R$ be a Noetherian integral domain and $M\subseteq R^p$ a submodule. Let $h\in R^p$ be an arbitrary element and set $r=\rank(M+Rh)$, %Assume $\rank(M)=\rank(M+Rh)=r$,
then $\I_r(M)h\subseteq \I_r(M+Rh)M$.
\end{lem}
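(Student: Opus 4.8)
The plan is to argue by a Cramer's-rule / cofactor-expansion computation. Let me set $r = \rank(M+Rh)$ and pick any matrix of generators for $M$, whose columns I'll think of as vectors $v_1,\dots,v_s \in R^p$. A generator of $\I_r(M)$ is an $r\times r$ minor of $M$, i.e. the determinant of a submatrix formed by choosing $r$ rows (indexed by some $\tL\subseteq\{1,\dots,p\}$, $|\tL|=r$) and $r$ columns $v_{j_1},\dots,v_{j_r}$. I must show that, after multiplying such a minor by $h$, the resulting element of $R^p$ lies in $\I_r(M+Rh)\cdot M$, i.e. is an $R$-combination of the $v_j$'s with coefficients in $\I_r(M+Rh)$. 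The point is that $\I_r(M+Rh)$ contains all the $r\times r$ minors of the enlarged matrix whose columns are $v_1,\dots,v_s,h$, so in particular it contains the minors obtained by using the column $h$ once together with $r-1$ of the $v_j$'s.

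First I would fix the row set $\tL$ with $|\tL|=r$ and columns $v_{j_1},\dots,v_{j_r}$, and let $D=\det$ of that submatrix, a generator of $\I_r(M)$; I want to show $Dh \in \I_r(M+Rh)M$. Work one coordinate at a time: fix $k\in\{1,\dots,p\}$ and consider the $(r+1)\times(r+1)$ matrix $A$ whose rows are indexed by $\tL\cup\{k\}$ and whose columns are $v_{j_1},\dots,v_{j_r},h$ (if $k\in\tL$ this matrix has a repeated row and $\det A=0$). Expanding $\det A$ along its last column $h$ gives a relation of the form $\sum_{\text{rows }\ell} \pm\, h_\ell \cdot (\text{$r\times r$ minor of }[v_{j_1}\cdots v_{j_r}]) = \det A$. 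The minor multiplying $h_k$ (the entry of $h$ in row $k$) is exactly $\pm D$ when $k\notin\tL$; the other minors appearing are $r\times r$ minors of $M$ as well, and $\det A$ is an $(r+1)\times(r+1)$ minor of $M+Rh$, which is $0$ because $\rank(M+Rh)=r$. A cleaner way to package this: since $\rank(M+Rh)=r$ over the domain $R$, the vector $h$, viewed in $Q(R)^p$, lies in the $Q(R)$-span of $v_{j_1},\dots,v_{j_r}$ whenever those columns already span a rank-$r$ space; Cramer's rule then expresses $h = \sum_t \frac{D_t}{D} v_{j_t}$ in $Q(R)^p$, where $D_t$ is the determinant obtained from $D$'s submatrix by replacing the $t$-th column by (the corresponding rows of) $h$. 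Clearing denominators gives $Dh = \sum_t D_t\, v_{j_t}$, an identity in $R^p$ since both sides lie in $R^p$; and each $D_t$ is an $r\times r$ minor of $M+Rh$, hence lies in $\I_r(M+Rh)$.

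The one gap in the previous paragraph is the case where the chosen columns $v_{j_1},\dots,v_{j_r}$ do \emph{not} span a rank-$r$ subspace — but then $D=0$ and the claim $Dh\in\I_r(M+Rh)M$ is trivial. So I would split into that trivial case and the case $D\neq 0$, where the Cramer identity $Dh=\sum_t D_t v_{j_t}$ holds in $Q(R)^p$ and therefore in $R^p$ (note $R$ is a domain, so $R\hookrightarrow Q(R)$), and the coefficients $D_t\in\I_r(M+Rh)$. Ranging over all generators $D$ of $\I_r(M)$ (all choices of $\tL$ and of $r$ columns) gives $\I_r(M)h\subseteq \I_r(M+Rh)M$, as desired. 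The main thing to be careful about is bookkeeping with the sign/placement conventions in Cramer's rule and making sure the denominators really clear — i.e. that each $D_t$ is genuinely an $r\times r$ minor of the augmented matrix $[M \mid h]$, which it is by construction. This is precisely the argument of \cite[1.6]{Gaffney92}; the only adaptation is that we phrase it for a submodule of $R^p$ rather than a map between free modules, which changes nothing in the computation.
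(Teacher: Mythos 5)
Your proof is correct and follows essentially the same route as the paper's (both are the Cramer's-rule argument of \cite[1.6]{Gaffney92}): fix a nonzero $r\times r$ minor $D$ of $M$, use the hypothesis $\rank(M+Rh)=r$ to express $Dh$ as a combination of the corresponding $r$ columns with coefficients that are $r\times r$ minors of $[M\mid h]$, and dispose of the case $D=0$ trivially. The only cosmetic difference is that you justify the identity $Dh=\sum_t D_t v_{j_t}$ by passing to $Q(R)$ and clearing denominators, whereas the paper verifies it inside $R$ by showing that the difference vector $g$ satisfies $g_i d=0$ for all $i$ via the vanishing of the $(r+1)\times(r+1)$ minors.
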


\begin{proof}
If $\rank(M)<r$, then $\I_r(M)=0$ and the conclusion clearly follows. Then we may assume $\rank(M)=r$. We identify $M$ with a matrix of generators and $M+Rh$ with the matrix $[M\vert h]$. Let $M'$ be a $r\times r$ submatrix of $M$ such that $d=\det(M')\neq 0$ and
let $\tL\subseteq \{1,\ldots, p\}$ be the rows of $M$ corresponding to the rows of $M'$. By Cramer's rule there exists $x_1,\ldots, x_r\in \I_r(M'\vert h_\tL)\subseteq \I_r(M\vert h)$ such that
$Mx= dh_\tL$, where $x=[\begin{matrix}
x_1&\ldots&
x_r
\end{matrix}]^T\in R^r.
$
Let $N$ be the $p\times r$ submatrix of $M$ corresponding to the columns of $M'$ and consider the vector $g=dh-Nx$. By construction, we have $g\in M+Rh$ and $g_\tL=0$. Let $i\in \{1,\ldots, p\}\setminus \tL$, then  the $(r+1)\times (r+1)$ minor of $[N\vert g]$ corresponding to the rows $\tL\cup\{i\}$ is $\pm g_id$ and it must vanish since $\rank(M+Rh)=r$. Therefore, $g_id=0$ which implies $g_i=0$. Thus $g=0$ and then $dh=Nx\subseteq  \I_r(M+Rh)M$. Since $M'$ was chosen arbitrarily the proof is complete.
\end{proof}

We are now ready to prove the theorem.

\begin{proof}[Proof of Theorem \ref{icm}]
We begin with (1) $\Rightarrow$ (2).  Let $\fp$  be a minimal prime of $R$ and  $V$ a DVR or a field  between $R/\fp$ and $R_\fp/\fp R_\fp$. Since $hV\in MV$,  for every $i\gs 1$ we have
$$\I_i(M+Rh)V =\I_i(MV+R(hV))\subseteq \I_i(MV)= \I_i(M)V.$$
Thus $\I_i(M+Rh)\subseteq \overline{\I_i(M)}$ and (2) follows.

Since (2) $\Rightarrow$ (3) is clear, it suffices to show (3) $\Rightarrow$ (1).  Let $\fp$  be a minimal prime of $R$ and for a submodule $N\subseteq R^q$ let $N(R/\fp)$ its image  in $(R/\fp)^q$. By assumption we have that $M(R/\fp)$ and $(M+Rh)(R/\fp)$ both have rank $r$. In particular, $\I_r(M)(R/\fp)=\I_r(M(R/\fp))\neq 0$, and likewise $\I_r(M+Rh)(R/\fp)\neq 0$.  Let $V$ a DVR or a field  between $R/\fp$ and $R_\fp/\fp R_\fp$. Then by the assumption and Lemma \ref{Cramer}, applied to $R/\fp$, we have
$$
(\I_r(M)V)hV=(\I_r(M)h)V\subseteq (\I_r(M+Rh)M)V=(\I_r(M+Rh)V)MV=(\I_r(M)V)MV.
$$
Thus $hV\in MV$. We conclude $h\in \overline{M}$, as desired.
\end{proof}
%Given a matrix of generators of a submodule $M$ of $R^p$, we address the fundamental problem of computing a system of generators of $\overline M$.

As an immediate consequence of Theorem \ref{icm} we have the following result.

\begin{cor}\label{IrMh}
Let $R$ be a Noetherian ring and let $M\subseteq R^p$ be a submodule having a rank. Let $r=\rank(M)$. Then
$$
\overline M=\left\{ h\in R^p: \rank(M + R h)=r
\hspace{0.4cm}\textnormal{and}\hspace{0.4cm}\I_r(M+R h)\subseteq \overline{\I_r(M)}\right\}.
$$
\end{cor}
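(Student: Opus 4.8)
The plan is to derive Corollary \ref{IrMh} directly from the equivalence $(1) \Leftrightarrow (3)$ in Theorem \ref{icm}, so that the only real work is checking that the rank hypotheses on $M + Rh$ used in the theorem are automatically captured by the description of the set on the right-hand side. First I would denote by $C$ the set on the right-hand side, namely $C = \{h \in R^p : \rank(M+Rh) = r \text{ and } \I_r(M+Rh) \subseteq \overline{\I_r(M)}\}$, and argue both inclusions $\overline{M} \subseteq C$ and $C \subseteq \overline{M}$.

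For $\overline{M} \subseteq C$: if $h \in \overline{M}$, then by Remark \ref{sameRank}(3) we have $\rank(M) = \rank(\overline{M})$, and since $M \subseteq M + Rh \subseteq \overline{M}$ we get $\rank(M+Rh) = r$; in particular $M + Rh$ has a rank. Hence Theorem \ref{icm} applies with this $h$, and $(1) \Rightarrow (3)$ gives $\overline{\I_r(M)} = \overline{\I_r(M+Rh)} \supseteq \I_r(M+Rh)$, so $h \in C$. For $C \subseteq \overline{M}$: if $h \in C$, then $\rank(M + Rh) = r = \rank(M)$, so again the rank hypothesis of Theorem \ref{icm} is satisfied; moreover $\I_r(M+Rh) \subseteq \overline{\I_r(M)}$ together with the reverse inclusion $\overline{\I_r(M)} \subseteq \overline{\I_r(M+Rh)}$ — which holds because $\I_r(M) \subseteq \I_r(M+Rh)$ as $M \subseteq M + Rh$ — yields $\overline{\I_r(M)} = \overline{\I_r(M+Rh)}$, i.e. condition $(3)$. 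Then $(3) \Rightarrow (1)$ gives $h \in \overline{M}$.

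I expect no genuine obstacle here; the corollary is essentially a repackaging of the theorem. The one point that deserves a line of care is that the set $C$ as written presupposes $M + Rh$ has a rank (this is implicit in writing $\rank(M+Rh) = r$), which is exactly what Theorem \ref{icm} needs, so the two formulations match up cleanly. I would write the proof in three or four sentences: note $\overline{M}$ always satisfies the two conditions by Remark \ref{sameRank}(3) and $(1)\Rightarrow(3)$; conversely any $h$ satisfying them has $\rank(M+Rh) = \rank(M)$ and, since $\I_r(M) \subseteq \I_r(M+Rh)$, equality of integral closures, hence $h \in \overline{M}$ by $(3)\Rightarrow(1)$.
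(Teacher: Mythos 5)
Your argument is correct and matches the paper's intent exactly: the paper offers no separate proof of Corollary \ref{IrMh}, presenting it as an immediate consequence of the equivalence $(1)\Leftrightarrow(3)$ in Theorem \ref{icm}, which is precisely how you derive both inclusions (using $\I_r(M)\subseteq\I_r(M+Rh)$ to upgrade the containment of ideals to equality of integral closures, and Remark \ref{sameRank}(3) to handle the rank condition). No issues.
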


Assume $R$ is local of dimension $d$ and let $\lambda(-)$ denote the length function of $R$-modules. If  $\lambda(R^p/M)<\infty$, we say $M$ has {\it finite colength} and in this case the limit  $$e(M)=(d+p-1)!\lim_{n\to \infty}\frac{\lambda([\Sym(R^p)]_n/[\R(M)]_n)}{n^{d+p-1}}$$
is called the {\it Buchsbaum-Rim multiplicity} of $M$.
It is known that if $R$ is Cohen Macaulay and $M$ is generated by $d+p-1$ elements, then $e(M)=\lambda(R^p/M)=\lambda(R/\I_p(M))$
(see for instance \cite[p.\,214]{Gaffney96}).

We recall the following numerical characterization of integral closures due to Rees \cite{Rees61} in the case of ideals and Katz \cite{Katz} for modules. %We state this result over integral domains, however this condition is not necessary.

\begin{thm}[{\cite[p.\,317]{HS},\cite{Katz}}]\label{numCr}
Let $R$ be a formally equidimensional Noetherian local ring of dimension $d>0$. %such that the $\fm$-adic completion $\hat{R}$ is equidimensional.
Let $N\subseteq M\subseteq R^p$ be submodules such that $\lambda(R^p/N)<\infty$. Then $\overline M=\overline{N}$ if and only if $e(N)=e(M)$.
\end{thm}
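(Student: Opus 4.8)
The plan is to reduce the module statement to the classical ideal-theoretic version of Rees's theorem via the connection established in Theorem \ref{icm} and Corollary \ref{IrMh}, together with the comparison between the Buchsbaum-Rim multiplicity of $M$ and a suitable multiplicity of the ideal of maximal minors. Since $N\subseteq M\subseteq R^p$ with $\lambda(R^p/N)<\infty$, the module $N$ has finite colength; because $R$ is formally equidimensional of dimension $d$, it is in particular equidimensional and the total ring of fractions behaves well, so $N$ (hence $M$) has a rank equal to $p$. The direction $\overline M=\overline N\Rightarrow e(N)=e(M)$ is the easy one: if $N$ is a reduction of $M$ then $\R(M)$ is integral over the subalgebra generated by the image of $N$, so the graded pieces $[\R(N)]_n$ and $[\R(M)]_n$ agree up to bounded colength, and the limit defining $e$ is insensitive to such a change; equivalently $e$ is an integral-closure invariant. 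So the content is in the converse.

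For the converse, assume $e(N)=e(M)$. First I would reduce to the case where $R$ is a complete local domain. Passing to $\widehat R$ does not change lengths, colengths, the Buchsbaum-Rim multiplicities, or integral closures of modules (integral closure commutes with completion for modules over a formally equidimensional ring, cf.\ \cite[Chapter 16]{HS}), and $\widehat R$ is equidimensional and universally catenary. Then, since Buchsbaum-Rim multiplicity and integral closure are both computed prime-by-prime on the minimal primes — here all of the same dimension $d$ by formal equidimensionality — one can further pass to $R/\fp$ for each minimal prime, i.e.\ assume $R$ is a complete local domain of dimension $d$. In this setting I would use the standard associativity/additivity formula that expresses $e(M)$ in terms of the Hilbert-Samuel multiplicity of the ideal $\I_p(M)$ of maximal minors: there is a polynomial identity, valid when $M$ has maximal rank $p$ and finite colength, relating $\lambda([\Sym(R^p)]_n/[\R(M)]_n)$ to $\lambda(R/\I_p(M)^{n})$ up to lower-order terms — concretely, one has the comparison
$$
e(M)=e\bigl(\I_p(M)\bigr)
$$
(the Buchsbaum-Rim multiplicity of a finite-colength submodule of $R^p$ equals the Hilbert-Samuel multiplicity of its ideal of maximal minors; this is part of the circle of results in \cite{Katz, Gaffney96} and follows from the fact that $\R(M)$ and the Rees algebra of $\I_p(M)$ have the same dimension and comparable Hilbert functions). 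Granting this, $e(N)=e(M)$ becomes $e(\I_p(N))=e(\I_p(M))$, and since $\I_p(N)\subseteq \I_p(M)$ are $\fm$-primary ideals in the formally equidimensional local ring $R$, Rees's classical multiplicity theorem \cite{Rees61} (see \cite[Theorem 11.3.1]{HS}) gives $\overline{\I_p(N)}=\overline{\I_p(M)}$.

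It remains to deduce $\overline N=\overline M$ from $\overline{\I_p(N)}=\overline{\I_p(M)}$. Take any $h\in M$. Then $\I_p(N+Rh)\subseteq \I_p(M)$ because $N+Rh\subseteq M$, so $\I_p(N+Rh)\subseteq \overline{\I_p(M)}=\overline{\I_p(N)}$; moreover $N+Rh\subseteq M$ has rank $p=\rank(N)$, so $\rank(N+Rh)=\rank(N)=p$. By Corollary \ref{IrMh} (with $r=p$), these two conditions say exactly that $h\in\overline N$. Hence $M\subseteq\overline N$, which gives $\overline M\subseteq\overline N$; the reverse inclusion is automatic from $N\subseteq M$. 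This closes the argument. The main obstacle I anticipate is making the reduction to a complete local domain fully rigorous while correctly tracking the Buchsbaum-Rim multiplicity across the passage to minimal primes — in particular verifying that $e(M)$ is additive over the minimal primes of $R$ with the common dimension $d$, so that $e(N)=e(M)$ globally is equivalent to the equality of the corresponding multiplicities on each $R/\fp$; once that bookkeeping is in place, the identification $e(M)=e(\I_p(M))$ and Rees's theorem do the rest.
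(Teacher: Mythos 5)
The central step of your plan is unjustified and in fact false: the identity $e(M)=e\bigl(\I_p(M)\bigr)$ does not hold for a general finite-colength submodule $M\subseteq R^p$; it is only valid under special hypotheses (e.g.\ $R$ Cohen--Macaulay and $M$ generated by $d+p-1$ elements, as recalled in Section \ref{preliminars}). The paper itself provides a counterexample in Remark \ref{basics}: for the module $M\subseteq\O_2^2$ considered there one has $e(M)=7$ while $e(\I_2(M))=8$. An even simpler one is $M=\fm\oplus\fm\subseteq R^2$ over $R=k[[x,y]]$: by the Kirby--Rees formula $e(M)=e(\fm)+e(\fm,\fm)+e(\fm)=3$, whereas $\I_2(M)=\fm^2$ has $e(\fm^2)=4$. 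Consequently your passage from $e(N)=e(M)$ to $e(\I_p(N))=e(\I_p(M))$ has no justification, and this is exactly the point where the module statement cannot be bought from Rees' theorem for ideals; note that the implication you need is itself a consequence of Theorem \ref{numCr} (via Theorem \ref{icm}), so establishing it independently is essentially as hard as the theorem you are trying to prove.

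The surrounding parts of the plan are fine: the easy direction (a reduction does not change the Buchsbaum--Rim multiplicity), the reduction to $\widehat R$ and to its minimal primes (additivity of $e$ over minimal primes together with $e(N(R/\fp))\geq e(M(R/\fp))$ turns the global equality into componentwise equalities, and formal equidimensionality guarantees all components have dimension $d$), and the final deduction of $\overline N=\overline M$ from $\overline{\I_p(N)}=\overline{\I_p(M)}$ via Corollary \ref{IrMh} is correct. But the missing middle step is the real content; this is why the paper quotes the result from Katz rather than deriving it from the ideal case, and Katz's argument does not go through the Fitting ideal $\I_p$ but through a genuinely module-theoretic reduction argument over quasi-unmixed rings. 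As written, the proposal has a genuine gap.
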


\begin{rem}\label{basics} Let $M\subseteq R^p$ be a submodule.
%\begin{enumerate}
%\item\label{Mbarra} By Theorem \ref{icm}, if $r$ denotes the rank of $M$, then we have
%$$
%\overline M=\left\{h\in \O_n^p: \rank(M)=\rank(M\vert h),\,\, \I_r(M\vert h)\subseteq \overline{\I_r(M)}\right\}.
%$$
%\item As remarked by Gaffney in \cite[p.\,305]{Gaffney92}, under the conditions of Theorem \ref{icm}, if $h\in \overline M$, then
%$\I_r(M,h)\subseteq \overline {\I_r(M)}$ for all $r$, with no assumption on the rank on $M$. %In particular $h_i\in \overline M_i$, for all $i=1,\dots, p$.
%\item
In general we have
\begin{equation}\label{icfonam}
\overline M\subseteq \overline{M_1\oplus \cdots \oplus M_p}=\overline{M_1}\oplus \cdots \oplus \overline{M_p}.
\end{equation}
However, the first inclusion in \eqref{icfonam} might be strict. For
instance, consider the
submodule of $\O_2^2$ generated by the columns of the matrix
$$
\left[\begin{matrix}
x+y & x^3 & y^3 \\
x & y & x
\end{matrix}\right].
$$
It is clear that $x^3\in \overline{M_1}$, $x\in\overline{M_2}$. Let
$h=[\begin{matrix} x^3 & x \end{matrix}]^T$, we can see that $h\notin \overline M$.
By Theorem \ref{icm} we have that
$$
h\in\overline M \Longleftrightarrow \I_2(M+ \O_2 h)\subseteq \overline {\I_2(M)}
\Longleftrightarrow e(\I_2(M))=e(\I_2(M+\O_2 h)),
$$
where the last equivalence follows from Theorem \ref{numCr}.
However $e(\I_2(M))=8$ and $e(\I_2(M+ \O_2 h))=6$, as can be computed using Singular \cite{Singular}.  Hence $h\notin \overline M$.

Another argument leading to the conclusion that $h\notin \overline M$ is the following.
We have that $e(M)=7$ and $e(M+  \O_2 h)=5$, computed again using Singular. Since these multiplicities are different, it follows that $h\notin M$, by Theorem \ref{numCr}. Moreover, by using Macaulay2 (see Remark \ref{intClRemark}) it is possible to prove that $\overline M$ is generated by the columns of the matrix
$$
\left[\begin{matrix}
x+y & x^3 & y^3&x^3y^2 \\
x & y & x&x+y
\end{matrix}\right].
$$
That is, $\overline{M}=M+\O_2[\begin{matrix}
x^3y^2&
x+y
\end{matrix}]^T.
$

%\end{enumerate}
\end{rem}

Given an analytic map $\varphi:(\C^m,0)\to (\C^n,0)$, we denote by $\varphi^*$ the morphism $\O_n\to \O_m$
given by $\varphi^*(h)=h\circ \varphi$, for all $h\in\O_n$. For submodules of $\O_n^p$ we have the following alternative definition of integral closure.

\begin{thm}[Gaffney {\cite[p.\,303]{Gaffney92}}]\label{morphDef} Let $M\subseteq \O_n^p$ be a submodule and let $h\in \O_n^p$. Then $h$ is {\it integral over $M$} if and only if
$\varphi^*(h)\in \O_1\varphi^*(M)$, for any analytic curve $\varphi:(\C,0)\to (\C^n,0)$.
%The set of integral elements over $M$, which will be denoted by $\overline M$, is a submodule of $\O_n^p$ containing $M$ and is called the {\it integral closure of $M$}.
\end{thm}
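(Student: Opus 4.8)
The statement to prove is Theorem \ref{morphDef}, Gaffney's curve criterion for integral dependence over a module $M \subseteq \O_n^p$: namely $h \in \overline{M}$ if and only if $\varphi^*(h) \in \O_1 \varphi^*(M)$ for every analytic curve $\varphi\colon (\C,0) \to (\C^n,0)$.

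The plan is to reduce the module statement to the already-established ideal-theoretic criteria via Theorem \ref{icm} and Corollary \ref{IrMh}, combined with the classical (Lejeune--Teissier) curve criterion for integral dependence of \emph{ideals} in $\O_n$. First I would dispose of the trivial case where $h$ does not preserve the rank: if $\rank(M + \O_n h) > \rank(M) = r$, then on one hand $h \notin \overline M$ by Remark \ref{sameRank}(3), and on the other hand one can produce a curve $\varphi$ along which $\varphi^*(h) \notin \O_1\varphi^*(M)$ — concretely, localize at a point where the $(r+1)\times(r+1)$ minors of $[M \mid h]$ do not all vanish and choose a generic analytic curve through the origin hitting that locus, so that the rank is still $r+1$ on the curve while $\varphi^*(M)$ has rank $\le r$; then $\varphi^*(h)$ cannot lie in $\O_1\varphi^*(M)$ since $\O_1$ is a DVR and $\O_1\varphi^*(M)$ is free of rank $\le r < r+1$. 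Hence we may assume $\rank(M + \O_n h) = r$ throughout, which is exactly the hypothesis under which Theorem \ref{icm} applies.

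Now for the forward direction ($\Rightarrow$): assume $h \in \overline M$. Given any analytic curve $\varphi\colon(\C,0)\to(\C^n,0)$, consider the submodule $\varphi^*(M) \subseteq \O_1^p$. The key point is that integral closure is compatible with the base change $\varphi^*$ in the sense that $\varphi^*(\overline M) \subseteq \overline{\O_1 \varphi^*(M)}$ — this follows directly from the valuative Definition \ref{theDefn}(1), since any DVR $V$ of $\O_1$ (or its fraction field) lying over $\O_1$ sits between $\O_1$ and its fraction field, and the composite $\O_n \to \O_1 \to V$ realizes the valuation test for $M$. Since $\O_1$ is itself a DVR (a principal ideal domain with a single maximal ideal), $\overline{\O_1\varphi^*(M)} = \O_1\varphi^*(M)$: a finitely generated torsion-free module over a DVR is free, hence integrally closed by Remark \ref{sameRank}(3) (free submodules of a free module are integrally closed). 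Therefore $\varphi^*(h) \in \O_1\varphi^*(M)$, as desired.

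For the reverse direction ($\Leftarrow$), assume $\varphi^*(h) \in \O_1\varphi^*(M)$ for all curves $\varphi$; we want $h \in \overline M$. By Corollary \ref{IrMh} it suffices to show $\I_r(M + \O_n h) \subseteq \overline{\I_r(M)}$ (the rank condition already being assumed). Use the classical curve criterion for ideals in $\O_n$ (the local ring $\O_n$ is a Noetherian normal domain, so Theorem \ref{numCr} together with the Lejeune--Teissier valuative description applies): an element $a \in \O_n$ lies in $\overline{\I_r(M)}$ iff $\varphi^*(a) \in \O_1\varphi^*(\I_r(M)) = \varphi^*(\I_r(M))\O_1$ for every curve $\varphi$. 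So fix a curve $\varphi$ and a generator $a$ of $\I_r(M + \O_n h)$, which is an $r\times r$ minor of $[M \mid h]$; if the chosen columns all come from $M$ then $a \in \I_r(M)$ and there is nothing to do, so assume the column $h$ is used. Applying $\varphi^*$, this minor becomes the corresponding $r\times r$ minor of $[\varphi^*(M) \mid \varphi^*(h)]$. Since $\varphi^*(h) \in \O_1\varphi^*(M)$ by hypothesis, column operations over $\O_1$ replace the $\varphi^*(h)$ column by a combination of columns of $\varphi^*(M)$, so that minor lies in $\varphi^*(\I_r(M))\O_1 + \O_1 \cdot 0$, i.e. in $\varphi^*(\I_r(M))\O_1$; more carefully, multilinearity of the determinant in the last column expresses $\varphi^*(a)$ as an $\O_1$-linear combination of $r\times r$ minors of $\varphi^*(M)$. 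Hence $\varphi^*(a) \in \varphi^*(\I_r(M))\O_1$ for every $\varphi$, so $a \in \overline{\I_r(M)}$, proving $\I_r(M+\O_n h) \subseteq \overline{\I_r(M)}$ and thus $h \in \overline M$ by Corollary \ref{IrMh}.

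The main obstacle is making the rank-jump case airtight in the $\Leftarrow$ direction packaging: strictly one must phrase things so that Corollary \ref{IrMh} is applicable, which presupposes $\rank(M+\O_n h) = r$; the cleanest route is to first prove (using a generic curve through a point where the order-$(r+1)$ minors are nonzero) that if the rank jumps then the curve criterion already fails, so both sides of the equivalence are false and there is nothing more to check. A secondary subtlety is invoking the ideal-level curve criterion in $\O_n$ in the precise form "$a \in \overline{J}$ iff $\varphi^*(a) \in \varphi^*(J)\O_1$ for all $\varphi$", which is standard (it follows from Theorem \ref{numCr} and the valuative criterion for normal domains, since every divisorial valuation of $\O_n$ is realized along a suitable analytic curve) but should be cited carefully.
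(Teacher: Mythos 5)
The paper itself offers no proof of Theorem \ref{morphDef}: it is quoted from Gaffney \cite{Gaffney92}, whose argument likewise reduces the module statement to the ideal of maximal minors, so your overall plan (dispose of the rank-jump case, then combine Theorem \ref{icm}/Corollary \ref{IrMh} with the classical Lejeune--Teissier curve criterion for ideals in $\O_n$) is the expected route, and your treatment of the reverse implication — multilinearity of the determinant in the $h$-column showing that every $r\times r$ minor of $[\varphi^*(M)\mid \varphi^*(h)]$ lies in $\varphi^*(\I_r(M))\O_1$, hence $\I_r(M+\O_n h)\subseteq\overline{\I_r(M)}$ — and of the rank-jump case is essentially sound.

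There is, however, a genuine gap in your forward implication. You assert that $\varphi^*(\overline M)\subseteq\overline{\O_1\varphi^*(M)}$ ``follows directly from the valuative Definition \ref{theDefn}(1)'' because the composite $\O_n\to\O_1\to V$ ``realizes the valuation test for $M$''. It does not: for a nonconstant curve and $n\geq 2$ the kernel of $\varphi^*$ is a prime of height $n-1$, not a minimal prime of $\O_n$, so neither $\O_1$ nor any DVR between $\O_1$ and its quotient field is a ring ``between $R/\fp$ and $R_\fp/\fp R_\fp$'' in the sense of Definition \ref{theDefn}, and Rees's valuative condition says nothing about such homomorphisms. Persistence of integral dependence under $\varphi^*$ is true, but it must be obtained from the equational/Rees-algebra characterization (Remark \ref{sameRank}(1)): $h\in\overline M$ means the linear form $h$ satisfies an equation of integral dependence over $\R(M)$ inside $\Sym(\O_n^p)$, and applying the induced map $\Sym(\O_n^p)\to\Sym(\O_1^p)$ yields such an equation for $\varphi^*(h)$ over the subalgebra generated by $\varphi^*(M)$, whence $\varphi^*(h)\in\overline{\O_1\varphi^*(M)}=\O_1\varphi^*(M)$; the last equality holds because over the DVR $\O_1$ every submodule of $\O_1^p$ is integrally closed (take $V=\O_1$ itself in Definition \ref{theDefn}) — your appeal to Remark \ref{sameRank}(3), which concerns free submodules, does not literally cover this. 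A secondary inaccuracy: your justification of the ideal-level criterion, ``every divisorial valuation of $\O_n$ is realized along a suitable analytic curve'', is false as stated (divisorial valuations are not curve valuations); simply cite the Lejeune--Teissier theorem \cite{LT}, whose proof proceeds via the normalized blowup and a curve through an appropriate point of the exceptional divisor, rather than deriving it this way.
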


\begin{ex}\label{morphEx}
It is also possible to check that $h\notin \overline M$ in the example from Remark \ref{basics} by considering the arc $\varphi:(\C,0)\to (\C^2,0)$ given by
$\varphi(t)=(-t+t^3, t)$, for all $t\in \C$. We have that
$\varphi^*(h)=[\begin{matrix}
(-t+t^3)^3 &
-t+t^3
\end{matrix}]^T
$
and that $\varphi^*(M)$ is generated by the columns of the matrix
$$
\left[\begin{matrix}
t^3 & (-t+t^3)^3 & t^3 \\
-t+t^3 & t & -t+t^3
\end{matrix}\right].
$$
We note that   the first and  third columns of the previous matrix coincide.
If $\varphi^*(h)\in \varphi^*(M)$, then we would have
\begin{equation}\label{elsI2}
\I_2 \left[\begin{matrix}
t^3 & (-t+t^3)^3 \\
-t+t^3 & t
\end{matrix}\right]=
\I_2 \left[\begin{matrix}
t^3 & (-t+t^3)^3 & (-t+t^3)^3 \\
-t+t^3 & t & -t+t^3
\end{matrix}\right].
\end{equation}
The ideal on the left of (\ref{elsI2}) is equal to $( t^4)$ and the ideal on the right of (\ref{elsI2})
is equal to $( t^6)$. Hence $\varphi^*(h)\notin \varphi^*(M)$ and by Theorem \ref{morphDef} it follows that
 $h\not\in \overline{M}$.
 \end{ex}
%\item %We have the following inclusions
%\begin{equation}\label{chain}
%\overline M \subseteq \overline{M_1}\oplus \cdots \oplus \overline{M_p}\subseteq
%M_1^0\oplus \cdots \oplus M_p^0.
%\end{equation}
%As shown in \cite{BiviaJLMS}, if $M$ has finite colength in $\O_n^p$, then the first inclusion of (\ref{icfonam})
%becomes an equality if and only if
%$$
%e(M)=
%\sum_{\substack{i_1+\cdots +i_p=n\\ i_1,\dots, i_p\geq 0
%}}e_{i_1,\dots, i_p}(M_1,\dots, M_p).
%$$

We finish this section with the following relation between integral closures and projections.

\begin{prop}\label{projections}
Let $R$ be a Noetherian ring and  $M\subseteq R^p$ a submodule, then for every non-empty $\tL\subseteq \{1,\dots, p\}$ we have $(\overline{M})_\tL\subseteq \overline{M_\tL}$.
\end{prop}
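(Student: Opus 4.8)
The plan is to argue directly from Rees's valuative definition of integral closure (Definition \ref{theDefn}(1)), transporting it along the coordinate projection $\pi_{\tL}\colon R^{p}\to R^{\vert\tL\vert}$. Fix an element $h\in\overline M$ and write $h_{\tL}:=\pi_{\tL}(h)$; we must show $h_{\tL}\in\overline{M_{\tL}}$, that is, that for every minimal prime $\fp$ of $R$ and every DVR or field $V$ with $R/\fp\subseteq V\subseteq R_{\fp}/\fp R_{\fp}$, the image of $h_{\tL}$ in $V^{\vert\tL\vert}$ lies in the image $M_{\tL}V$ of $M_{\tL}$ under $M_{\tL}\hookrightarrow R^{\vert\tL\vert}\to V^{\vert\tL\vert}$. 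The crucial observation is that the data (minimal primes $\fp$, valuation rings $V$) appearing in this criterion depend only on the ring $R$, hence are exactly the same data that witness $h\in\overline M$.

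So fix such $\fp$ and $V$. By hypothesis the image $hV\in V^{p}$ lies in $MV$, the image of the composite $M\hookrightarrow R^{p}\to V^{p}$. Being $R$-linear, $\pi_{\tL}$ induces a $V$-linear map $\pi_{\tL}^{V}\colon V^{p}\to V^{\vert\tL\vert}$ (namely $\pi_{\tL}\otimes_{R}V$, using $V^{p}=R^{p}\otimes_{R}V$). Now consider the square of $R$-module maps formed by $M\hookrightarrow R^{p}$, the base changes of these to $V$, and the two projections $\pi_{\tL}$ and $\pi_{\tL}^{V}$: it commutes, and the restriction of $\pi_{\tL}$ to $M$ factors through $M_{\tL}\subseteq R^{\vert\tL\vert}$ by the very definition of $M_{\tL}$. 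Chasing $M$ around this diagram two ways shows that $\pi_{\tL}^{V}$ carries $MV$ onto $M_{\tL}V$; and by construction $\pi_{\tL}^{V}(hV)$ is precisely the image of $h_{\tL}$ in $V^{\vert\tL\vert}$. Applying $\pi_{\tL}^{V}$ to the containment $hV\in MV$ therefore yields $h_{\tL}V\in M_{\tL}V$.

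Since $\fp$ and $V$ were arbitrary, this says $h_{\tL}\in\overline{M_{\tL}}$, and since $h\in\overline M$ was arbitrary we conclude $(\overline M)_{\tL}\subseteq\overline{M_{\tL}}$. No result beyond the definition is needed, so there is no genuine obstacle; the only point requiring a little care is the bookkeeping in the second paragraph, namely that ``map $M$ into $V^p$, then project'' agrees with ``project $M$ onto $M_{\tL}$, then map into $V^{\vert\tL\vert}$''. This is immediate from the $R$-linearity of $\pi_{\tL}$ together with the right-exactness (indeed functoriality) of $-\otimes_{R}V$, which is what makes the square above commute.
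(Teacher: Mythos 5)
Your proof is correct and is essentially the paper's own argument: both verify the valuative criterion directly, using that the coordinate projection commutes with the maps to $V^{p}$, so that $h_{\tL}V=(hV)_{\tL}\in (MV)_{\tL}=M_{\tL}V$ for every admissible $\fp$ and $V$. The only difference is that you spell out the commuting square explicitly, which the paper leaves implicit.
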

\begin{proof}
Fix $h\in \overline{M}$. For every  a minimal prime $\fp$ of $R$ and every   DVR or  field $V$ between $R/\fp$ and $R_\fp/\fp R_\fp$ we have
$h_\tL V=(hV)_\tL\in (MV)_\tL=M_\tL V.$
Thus $h_\tL\in \overline{M_\tL}$. The result follows.
\end{proof}

\section{The analytic spread of decomposable modules}\label{tasodm}
%%%%%%%%%%%%%%%%%%%%%%%%%%%%%%%%%%%%%%%%%%%%%%%%%%%%%%%%%%%%%%%%%%%%%%%%%%%%%%%%%%%%%%%%%%%%%%%%%%%%%%%%%%%%%%%%%%%%%%%%%%%%%%

In this section we study the analytic spread of decomposable modules and its relation with the analytic spread  of their components. Our main results are Theorem \ref{mainAS} and its corollaries. We begin with some necessary background information.

\subsection{Multi-graded algebras and multi-projective spectrum}\label{multigrad1}    In this subsection we recall several facts about multi-graded algebras and their multi-homogeneous spectrum, we refer the reader to \cite{Hyry99}  for more information. We start by setting up some notation.

Let $p\in \ZZ_{>0}.$ We denote by $\fn$  the vector $(n_1,\ldots, n_p)\in \NN^p$. For convenience we also set $\mathbf{0}=(0,\ldots, 0)$ and  $\mathbf{1}=(1,\ldots, 1)$ where each of these vectors belongs to $\NN^p$.  We  call the sum $n_1+\cdots+n_p$ the {\it total degree} of $\fn$ and denote it by $|\fn|$.

Let $R$ be a Noetherian  ring and $A=\oplus_{\fn\in \NN^p}A_{\fn}$ a Noetherian $\NN^p$-graded algebra  with $A_{\mathbf{0}}=R$ and generated by the elements of total degree one ({\it standard graded}).
We  denote by $A^\Delta$ the {\it diagonal subalgebra} of $A$, i.e., $A^\Delta=\oplus_{n\in \NN}A_{n\mathbf{1}}$. For every $1\ls i\ls p$ we write
$A^{(i)}=\oplus_{n_i=0}A_{\fn}$. We also consider the following $\NN^p$-homogeneous $A$-ideals
$A^{+}_i=\oplus_{n_i>0}A_{\fn}$ for $1\ls i\ls p$ and $  A^{+}=\oplus_{n_1,\ldots, n_p>0}A_{\fn}.$
We write
$\Proj^p A = \{P\in \Spec A\mid P \text{ is $\NN^p$-homogeneous, and }A^+\not\subset P\}.$
The {\it dimension} of $\Proj^p A$ is one minus the maximal length  of an increasing chain of elements of $\Proj^p A$,  $P_0\subsetneq P_1\subsetneq\cdots \subsetneq P_d$. %,  where  $P_i\in \Proj A$ for $i=1\ls i\ls  d.$
The relation between the dimensions of $\Proj^p A$ and $A$ is explained in the following lemma.

\begin{lem}[{\cite[1.2]{Hyry99}}]\label{dims}
Let $\cZ=\Proj^p A$ and assume $\cZ\neq \emptyset$, then
\begin{enumerate}
\item[$(1)$] $\dim \cZ = \max\{\dim A/P\mid P\in \cZ\}-p\ls \dim A -p$.
	\item[$(2)$]  If $\dim A^{(i)}<\dim A$ for every $1\ls i\ls p$, then $\dim \cZ = \dim A-p$.
\end{enumerate}
\end{lem}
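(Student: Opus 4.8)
Setting $\cZ = \Proj^p A$ (nonempty), we have $\dim \cZ = \max\{\dim A/P \mid P \in \cZ\} - p \le \dim A - p$, and if $\dim A^{(i)} < \dim A$ for all $i$, then $\dim \cZ = \dim A - p$.

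Let me think about how to prove this.The plan is to relate chains of $\NN^p$-homogeneous primes in $\Proj^p A$ to chains of arbitrary primes in $\Spec A$, paying attention to the "fiber direction" of the $\NN^p$-grading. First I would recall the standard fact that for an $\NN^p$-graded algebra $A$, and any prime $P$ of $A$, the largest $\NN^p$-homogeneous prime $P^*$ contained in $P$ satisfies $\dim A/P^* \le \dim A/P + p$ in the standard graded setting; more precisely, if $P$ is not homogeneous, then $\dim A/P^* = \dim A/P + (\text{number of ``grading directions'' not already collapsed})$, and since there are $p$ directions, the defect is at most $p$. This is the multigraded analogue of the classical fact that $\dim A/P^* = \dim A/P + 1$ for a non-homogeneous prime in a singly graded ring. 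For part (1), the key observation is that any $P \in \cZ$ has $A^+ \not\subset P$, which forces, for each $i$, the existence of a homogeneous element of positive $i$-th degree outside $P$; localizing/passing to a homogeneous localization makes the grading ``effective'' in all $p$ directions above $P$, so one can build a saturated chain of length $p$ of $\NN^p$-homogeneous primes above $P$ inside $A$ but outside $\cZ$ (e.g.\ by successively using the relevant irrelevant-type ideals $A_i^+$). Concatenating such a chain above a $P \in \cZ$ realizing $\max\{\dim A/P\}$ with a chain in $A/P$ gives $\dim \cZ \ge \max\{\dim A/P\} - p$; conversely, any chain in $\cZ$ together with a length-$p$ tail of non-relevant homogeneous primes gives a chain in $\Spec A$, yielding $\dim\cZ \le \max\{\dim A/P\} - p \le \dim A - p$. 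This proves (1).

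For part (2), I would argue that the extra hypothesis $\dim A^{(i)} < \dim A$ for all $i$ guarantees that no minimal prime of $A$ of maximal dimension can contain all of $A_i^+$ for any $i$, hence cannot contain $A^+$; equivalently, there is a minimal prime $Q$ with $\dim A/Q = \dim A$ and $A^+ \not\subset Q$, so $Q \in \cZ$. Then $\max\{\dim A/P \mid P \in \cZ\} = \dim A$, and (1) gives $\dim \cZ = \dim A - p$. The point is that if $A^+ \subseteq Q$ then $A_i^+ \subseteq Q$ for some $i$ (since $A^+$ is, up to radical, the intersection of the $A_i^+$, or more directly $A^+ = A_1^+ \cap \cdots \cap A_p^+$ and $Q$ prime forces one factor inside), and $A_i^+ \subseteq Q$ would put $Q$ in the image of $\Spec A^{(i)} \hookrightarrow \Spec A$, forcing $\dim A/Q \le \dim A^{(i)} < \dim A$, a contradiction.

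I expect the main obstacle to be making the ``length-$p$ chain in the grading directions'' argument fully rigorous: one must verify that above a relevant prime $P$ one really can produce a \emph{saturated} chain of $\NN^p$-homogeneous primes of length exactly $p$ landing outside $\cZ$, and dually that a chain in $\cZ$ extends by exactly $p$ steps in $\Spec A$, with no loss. The cleanest route is probably to reduce to the case $A = R[t_1,\dots,t_p]$-like behavior via the standard-graded generation hypothesis, or to cite the singly graded statement $\dim A/P^* \in \{\dim A/P, \dim A/P + 1\}$ and iterate it once per grading variable after a suitable regrading; but one should be careful that iterating requires each intermediate quotient to remain well-behaved. Since this is precisely \cite[1.2]{Hyry99}, in the write-up I would either reproduce Hyry's argument in this controlled form or simply invoke it, and note that the standard-graded hypothesis on $A$ is what keeps all the defects equal to exactly one per direction.
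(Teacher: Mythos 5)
The first thing to note is that the paper does not prove this lemma at all: it is quoted verbatim from \cite[1.2]{Hyry99}, so your closing option of ``simply invoking it'' is exactly what the authors do. The issue is with the sketch you offer in place of that citation. Your construction of $p$ extra homogeneous primes above a relevant prime $P$ (using the images of $A_1^+, A_1^++A_2^+,\dots$ in the domain $A/P$, which are prime because the quotients are the subrings $\oplus_{n_1=\cdots=n_j=0}(A/P)_\fn$) is sound, but it only proves the \emph{upper} bound: a chain $P_0\subsetneq\cdots\subsetneq P_d$ in $\cZ$ extends by $p$ steps above $P_d$, so $d\le \dim A/P_0-p$. Your claim that ``concatenating such a chain above a $P$ realizing the max with a chain in $A/P$ gives $\dim\cZ\ge\max\{\dim A/P\}-p$'' is logically backwards: the $p$ primes you build lie \emph{outside} $\cZ$ (they contain the $A_i^+$), and a chain in $A/P$ consists of arbitrary, generally non-homogeneous and non-relevant primes, so the concatenation is a chain in $\Spec A$ and bounds $\dim A/P$, not $\dim\cZ$. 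The genuinely hard half of (1) is to produce, for a relevant $P$ maximizing $\dim A/P$, a chain of length $\dim A/P-p$ consisting of primes that stay in $\cZ$; neither the $P\mapsto P^*$ defect fact you quote nor the ``irrelevant-type ideals'' construction does this. The standard route is through the multihomogeneous charts: for $f=f_1\cdots f_p$ with $f_i\in A_{\fe_i}$ one has $A_f\cong A_{(f)}[t_1^{\pm1},\dots,t_p^{\pm1}]$, every chain in $\cZ$ lies in some chart $D_+(f)\cong\Spec A_{(f)}$ (choose $f$ avoiding the top prime of the chain), and one must then show that a suitable chart of $A/P$ has dimension $\dim A/P-p$; this last dimension-theoretic step is where the real work and the hypotheses of \cite{Hyry99} (a local base ring) enter, and your sketch does not engage with it.

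Your argument for (2) is fine modulo (1): from $A^+\supseteq A_1^+\cdots A_p^+$ and primality one gets $A_i^+\subseteq Q$ for some $i$ whenever $A^+\subseteq Q$, whence $\dim A/Q\le\dim A^{(i)}<\dim A$, so a minimal prime of maximal dimension is relevant; you should add that minimal primes of an $\NN^p$-graded Noetherian ring are homogeneous (the paper cites \cite[A.3.1]{HS} for this elsewhere), since otherwise $Q\in\cZ$ is not literal. As it stands, though, the sketch cannot replace the citation, because the equality in (1) — the only part of the lemma with real content — is exactly the part your argument does not reach.
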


It is possible to give $\Proj^p A$ a structure of scheme and to show that it is isomorphic to  $\Proj^1 A^\Delta$
(see \cite[Part II, Exercise 5.11]{Harts} and also \cite[Lemma 3.2]{Hayasaka2} and \cite[Lemma 7.1]{Kirby-Rees2}).
For the reader's convenience, we provide a proof of the following particular result which suffices for our applications.

\begin{prop}\label{bijection}
Let $\iota : A^\Delta\to A$ be the natural inclusion. Then $\iota^*: \Proj^p A \to \Proj^1 A^\Delta$ is a bijection.
\end{prop}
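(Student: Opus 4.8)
The plan is to construct the inverse map explicitly and check it is well-defined, then verify that $\iota^*$ and this candidate inverse are mutually inverse. Given $P \in \Proj^1 A^\Delta$ — an $\NN$-homogeneous prime of the diagonal subalgebra not containing $(A^\Delta)_+$ — I would first produce a candidate $\NN^p$-homogeneous prime $Q$ of $A$. The natural candidate is the ``saturation-type'' ideal
\[
Q \;=\; \bigl\{\, a \in A \;:\; a^N A_{\fone} \subseteq \text{(ideal generated by }P\text{ in }A)\text{ for some }N, \text{ homogeneous part by part}\,\bigr\},
\]
or, more concretely, the ideal whose $\fn$-graded piece in total degree $n = |\fn|$ consists of those $a \in A_{\fn}$ such that $a \cdot (A_{\fone})^{\,k} \subseteq P$ for all large $k$ (so that multiplying up to a diagonal degree lands in $P$). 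I would need to check $Q$ is a prime $\NN^p$-homogeneous ideal, that $A^+ \not\subseteq Q$, and that $Q \cap A^\Delta = P$; the last uses that $P \not\supseteq (A^\Delta)_+$, i.e. $P$ does not kill the whole diagonal, so pulling back loses nothing.

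\textbf{Key steps, in order.} First, given $Q \in \Proj^p A$, show $\iota^*(Q) = Q \cap A^\Delta \in \Proj^1 A^\Delta$: it is clearly $\NN$-homogeneous and prime, and one must show it does not contain $(A^\Delta)_+ = \bigoplus_{n>0} A_{n\fone}$. This is where the hypothesis $A^+ \not\subseteq Q$ is used together with standardness: since $A$ is generated in total degree one, for $Q \in \Proj^p A$ one can find, for each $i$, some element of $A_{\fe_i}$ outside $Q$ (using primeness and $A^+ \not\subseteq Q$, distributing a product of all $A_{\fe_i}$'s), and then a suitable product of such elements lands in $A_{\fone}\setminus Q$, hence in $(A^\Delta)_+ \setminus \iota^*(Q)$. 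Second, define the reverse assignment $P \mapsto Q$ as above and check well-definedness (prime, homogeneous, $A^+ \not\subseteq Q$). Third, verify the two composites are the identity: $\iota^*$ applied to the $Q$ built from $P$ returns $P$ (using $P \not\supseteq (A^\Delta)_+$ so that saturating and re-intersecting is harmless), and starting from $Q \in \Proj^p A$, the ideal built from $Q \cap A^\Delta$ equals $Q$ (here one uses that an $\NN^p$-homogeneous prime not containing $A^+$ is determined by its diagonal part, again via standardness: any homogeneous $a \in A_\fn$ can be pushed into the diagonal by multiplying by suitable degree-one elements not in $Q$).

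\textbf{Main obstacle.} The crux is the last verification, that $Q$ is recovered from its diagonal intersection — equivalently, the injectivity of $\iota^*$ on $\Proj^p A$. Concretely: if $a \in A_\fn \setminus Q$, I must manufacture an element of $A^\Delta \setminus Q$ that ``witnesses'' $a$, i.e. show $a$ cannot be forced into the saturation of $Q \cap A^\Delta$. The argument is to multiply $a$ by monomials in generators of $A_{\fe_i}$ chosen outside $Q$ — which exist for each $i$ by the standardness-and-$A^+\not\subseteq Q$ argument above — to reach a diagonal degree; since $Q$ is prime, the product stays outside $Q$, so it lies outside $Q\cap A^\Delta$, showing $a \notin$ (the ideal rebuilt from $Q \cap A^\Delta$). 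Surjectivity and the other composite are comparatively routine once the right definition of the inverse is fixed. I would present the reverse map cleanly, remark that it corresponds on schemes to the standard iso $\Proj^p A \cong \Proj^1 A^\Delta$, and cite \cite{Harts, Hayasaka2, Kirby-Rees2} for the scheme-theoretic statement while giving the bijection proof in full.
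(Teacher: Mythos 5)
Your injectivity half is sound: given $Q\in\Proj^p A$, standardness together with primeness of $Q$ and $A^+\not\subseteq Q$ does produce, for each $i$, an element $f_i\in A_{\fe_i}\setminus Q$, and multiplying a homogeneous $a\in A_\fn$ by $f_1^{m-n_1}\cdots f_p^{m-n_p}$ (with $m=\max_i n_i$) lands in $A_{m\fone}$ and stays outside $Q$ by primeness; this shows a prime in $\Proj^p A$ is determined by its intersection with $A^\Delta$, which is injectivity. The genuine gap is in the surjectivity half. Your candidate inverse is misdefined: for $a\in A_\fn$ with $\fn\notin\NN\fone$, the products $a\cdot(A_\fone)^k$ sit in multidegree $\fn+k\fone$, which is never diagonal, so the condition ``$a\cdot(A_\fone)^k\subseteq P$'' with $P\subseteq A^\Delta$ forces $a\,(A_\fone)^k=0$ and is not the intended saturation --- multiplying by powers of the degree-$\fone$ piece does not move you onto the diagonal; what is needed is multiplication by the \emph{complementary} degrees, e.g. $Q_\fn:=\{a\in A_\fn:\ a\,A_{m\fone-\fn}\subseteq P \text{ for } m\geq\max_i n_i\}$. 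The alternative ``$a^N A_\fone\subseteq PA$'' version is too vague to assess and is not obviously prime or contracting to $P$. Moreover, even with the corrected definition, the items you label ``comparatively routine'' are exactly the content of surjectivity: primality of $Q$ (a product computation inside $A^\Delta$ using primeness of $P$), $Q\cap A^\Delta=P$ (which needs $A_{j\fone}\not\subseteq P$ for all $j>0$, itself a consequence of $P\not\supseteq(A^\Delta)^+$ and standardness), and $A^+\not\subseteq Q$. None of these is carried out, so as written the inverse map has not been produced.

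For comparison, the paper avoids the explicit inverse altogether: fix $0\neq f_i\in A_{\fe_i}$, set $f=f_1\cdots f_p\in A_\fone$, and observe that in $A_f$ every homogeneous element is a unit times an element of $A^\Delta_f$, so $\iota_f^*$ is bijective; injectivity and surjectivity of $\iota^*$ then follow by choosing $f$ outside the given prime (of $A$, respectively of $A^\Delta$) and pulling back, the prime of $A$ in the surjectivity step being recovered from the localized one. Your multiplication-by-$f_i$ device is the same germ of an idea, but routed through localization it yields surjectivity for free, with no saturation ideal to define or verify. If you prefer your route, replace the definition of $Q$ by the complementary-degree one above and write out the primality and contraction checks; also note the degenerate case $\Proj^p A=\emptyset$, which the paper dispatches first.
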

\begin{proof}
Clearly $\Proj^p A = \emptyset$ if and only if $\Proj^1 A^\Delta = \emptyset$ if and only if $A_{n\mathbf{1}}=0$ for $n\gg 0$, then we may assume these two sets are both non-empty.  For every $1\ls i\ls p$, let $\mathfrak{e}_i=(0,\ldots,0,1,0,\ldots, 0)\in \NN^p$ where the 1 is in the $i$th-position. Fix $0\neq f_i\in A_{\mathfrak{e}_i}$  for $1\ls i\ls p$ and let $f=f_1\cdots f_p$. Since  every element in the localization  $A_f$ is a unit times an element of $A^\Delta_f$, one can easily see that $\iota_f^*$ is bijective.

We first show $\iota^*$ is injective. Let $P_1,P_2\in \Proj^p A$ and assume $\iota^*(P_1)=\iota^*(P_2)$. If $f$ is as above and such that $f\not \in P_1$ (thus $f\not \in P_2$), then by assumption $\iota^*_f(P_1A_f)=\iota^*_f(P_2A_f)$. Hence $P_1A_f=P_2A_f$, which implies $P_1=P_2$.

We now show $\iota^*$ is surjective. Let $P\in \Proj^1 A^\Delta$ and $f\not \in P$ as above. Then there exists $Q\in A$ such that $\iota^*_f(QA_f)=PA^\Delta_f$, which implies $\iota^*(Q)=P$, finishing the proof.
\end{proof}

We end this subsection with the following lemma that will be used in the proofs of our main results.
%If $T$ is an integral domain, we denote its field of fractions by $Q(T)$.

\begin{lem}\label{posFiber}
Let $A=\oplus_{\fn\in \NN^p}A_n$ be a Noetherian standard $\NN^p$-graded algebra and $\fp\in \Proj^{p-1} A^{(p)}$ (if $p=1$,  $\Proj^0 A_0$ is simply $\Spec A_0$). Fix $e\in \NN$, then the following statements are equivalent.
\begin{enumerate}
\item[$(1)$] There exists a chain of elements in  $\Proj^p A$, $P_0\subsetneq P_1\subsetneq \cdots \subsetneq P_{e-1}$ such that $\fp = P_i\cap A^{(p)}$ for every $0\ls i \ls e-1$.
\item[$(2)$] $\dim Q(A^{(p)}/\fp)\otimes_{A^{(p)}} A \gs e$.
\end{enumerate}
\end{lem}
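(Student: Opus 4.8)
The plan is to reduce both conditions to the Krull dimension of the fiber algebra $T:=Q(A^{(p)}/\fp)\otimes_{A^{(p)}}A$. Recall that $A$ is standard $\NN$-graded over $A^{(p)}$ via its $p$-th grading, with $A^{(p)}$ as its $p$-degree-zero part and generated in $p$-degree one by $A_{\mathfrak e_p}$; hence $T$ is a standard $\NN$-graded algebra over the field $L:=Q(A^{(p)}/\fp)$ with $T_0=L$. First I would replace $A$ by $A/\fp A$: one checks $\fp A\cap A^{(p)}=\fp$, that $(A/\fp A)^{(p)}=A^{(p)}/\fp$ is a domain, that $P\mapsto P/\fp A$ is an order isomorphism from $\{P\in\Proj^p A : P\cap A^{(p)}=\fp\}$ onto $\{\bar P\in\Proj^p(A/\fp A) : \bar P\cap(A/\fp A)^{(p)}=(0)\}$, and that $T$ is unchanged. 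Thus I may assume $B:=A^{(p)}$ is a domain and $\fp=(0)$, and it suffices to show that the maximal number of elements in a chain inside $\{P\in\Proj^p A : P\cap B=(0)\}$ equals $\dim T$.

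Next, let $\mathcal H$ be the multiplicative set of nonzero $\NN^{p-1}$-homogeneous elements of the domain $B$. Then $B^h:=\mathcal H^{-1}B$ is a \emph{graded field} (every nonzero homogeneous element is a unit), with degree-zero part a field $K$, and $A^h:=\mathcal H^{-1}A$ is Noetherian and standard $\NN$-graded over $B^h$. I would establish three facts. (a) The map $P\mapsto PA^h$ is an order isomorphism from $\{P\in\Proj^p A : P\cap B=(0)\}$ onto the set of homogeneous primes $\mathfrak Q$ of $A^h$ with $A^+A^h\not\subseteq\mathfrak Q$; the point is that an $\NN^p$-homogeneous prime $P$ meets $\mathcal H$ if and only if $P\cap B\neq(0)$. (b) $A^+A^h=A_{\mathfrak e_p}A^h$, the irrelevant ideal of $A^h$ for the $p$-grading: this follows from the factorization $A^+=A_{(1,\dots,1,0)}\cdot(A_{\mathfrak e_p}A)$ and the observation that the homogeneous subgroup $A_{(1,\dots,1,0)}\subseteq B$ is nonzero (because $\fp\in\Proj^{p-1}A^{(p)}$ forces the irrelevant ideal of $A^{(p)}/\fp$ to be nonzero), hence generates the unit ideal of $B^h$. (c) Using standardness together with the graded-field property of $B^h$, there is a graded isomorphism $A^h\cong B^h\otimes_K C^\flat$, where $C^\flat:=K[\text{image of }A_{\mathfrak e_p}\text{ in }A^h]$ is a standard $\NN$-graded $K$-algebra, and consequently $T=L\otimes_{B^h}A^h=L\otimes_K C^\flat$ (using $L=Q(B^h)$).

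Then I would prove that the homogeneous primes of $B^h\otimes_K C^\flat$ are precisely the extensions $(B^h\otimes_K C^\flat)\mathfrak q$ of homogeneous primes $\mathfrak q$ of $C^\flat$, this correspondence being an order isomorphism that carries $\{\mathfrak Q : A^+A^h\not\subseteq\mathfrak Q\}$ onto $\Proj^1 C^\flat$; the crucial input is that $B^h\otimes_K D$ is a domain whenever $D$ is a graded domain over $K$ (it is a crossed product of $D$ with a free abelian group), and is itself a graded field when $D$ is one. Combining this with (a) and (c), the maximal number of elements in a chain inside $\{P\in\Proj^p A : P\cap A^{(p)}=\fp\}$ equals the corresponding number for $\Proj^1 C^\flat$, which equals $\dim C^\flat$ because $C^\flat$ is standard $\NN$-graded over the field $K$ (so $\dim C^\flat=\dim\Proj^1 C^\flat+1$ and $\Proj^1 C^\flat$ realizes a chain of $\dim C^\flat$ homogeneous primes; the case $C^\flat=K$ is trivial). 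Finally $\dim C^\flat=\dim(L\otimes_K C^\flat)=\dim T$, since the Krull dimension of a finite type algebra over a field is invariant under base field extension. Hence (1) holds for a given $e$ if and only if $\dim T\geq e$, which is (2).

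The step I expect to cause the most trouble is the identification of the fiber with $\Proj^1 C^\flat$ (together with part (a)). One must not try to identify the fiber of the projection $\Proj^p A\to\Proj^{p-1}A^{(p)}$ over $\fp$ with $\Proj T$: the primes occurring in (1) are required to be $\NN^p$-homogeneous, whereas the points of $\Proj T$ correspond to primes that are homogeneous only for the single $p$-grading, and these two sets really do differ. For instance, for $A=\k[x,y_1,y_2]$ with $\deg x=(1,0)$ and $\deg y_1=\deg y_2=(0,1)$, the prime $(y_1-xy_2)$ corresponds to a point of $\Proj T$ but is not $\NN^2$-homogeneous, so it does not lie in $\Proj^p A$. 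The auxiliary algebra $C^\flat$ supplies a single-graded model whose $\Proj$ matches the fiber exactly while still computing $\dim T$.
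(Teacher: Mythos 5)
Your proposal is correct, but it follows a genuinely different route from the paper's proof. The paper works directly with the fiber ring $W=Q(A^{(p)}/\fp)\otimes_{A^{(p)}}A$: for $(1)\Rightarrow(2)$ it pushes the chain $P_0\subsetneq\cdots\subsetneq P_{e-1}$ into $W$ and caps it with the maximal ideal $(\fp A+A^+_p)W$, and for $(2)\Rightarrow(1)$ it identifies $\dim W$ with $\dim (A/\fp A)_{\fp A+A^+_p}$ and then invokes the $\NN^p$-homogeneity of associated primes together with a multigraded adaptation of \cite[1.5.8(a)]{BH} to produce a chain of $\NN^p$-homogeneous primes below $\fp A+A^+_p$, which automatically contract to $\fp$. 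You instead build a structural model of the fiber: reduce modulo $\fp A$, invert the nonzero homogeneous elements of $B=A^{(p)}/\fp$ so that $B^h$ becomes a graded field (a Laurent polynomial ring over $K$), split $A^h\cong B^h\otimes_K C^\flat$, and identify the set of $\NN^p$-homogeneous primes lying over $\fp$ with $\Proj^1 C^\flat$, concluding with $\dim T=\dim C^\flat$ by invariance of dimension under base field extension. The key assertions you leave as facts do check out: the injectivity of $B^h\otimes_K C^\flat\to A^h$, and the statement that every homogeneous prime of $A^h$ is extended from a homogeneous prime of $C^\flat$, both follow from the observation that each graded component of $B^h$ is one-dimensional over $K$, so every multihomogeneous element of $A^h$ is a unit of $B^h$ times an element of $C^\flat$; the nonvanishing of the image of $A_{(1,\dots,1,0)}$ needed in your step (b) is exactly where the hypothesis $\fp\in\Proj^{p-1}A^{(p)}$ enters, as you note. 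Comparing the two: the paper's argument is much shorter but relies on the multigraded chain lemma stated as a ``direct adaptation'' of \cite[1.5.8(a)]{BH}, whereas your argument needs only the classical single-graded fact over a field and, at the cost of more bookkeeping, yields more, namely the exact equality of $\dim T$ with the maximal length of a chain in the fiber and an explicit description of that fiber as $\Proj^1$ of a standard graded algebra over a field; your closing warning that the fiber cannot be identified with $\Proj$ of $T$ as a set (only its dimension is captured) is accurate and is precisely the subtlety both proofs must negotiate.
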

\begin{proof}
Set $W = Q(A^{(p)}/\fp)\otimes_{A^{(p)}} A$. If (1) holds, then $P_0W\subsetneq \cdots \subsetneq P_{e-1}W \subsetneq ( \fp +A^+_p)W=W^+$ is a chain of prime ideals in $W$. Thus, $\dim W\gs e$ and (2) follows.

Conversely, if (2) holds then $\dim (A/\fp A)_{\fp A+A^+_p}\gs e$. Since associated primes of $\NN^p$-graded rings are $\NN^p$-homogeneous (\cite[A.3.1]{HS}), a direct adaptation of \cite[1.5.8(a)]{BH} to $\NN^p$-graded rings shows that there exist  $\NN^p$-homogeneous  $A$-ideals $\fp A\subseteq P_0\subsetneq \cdots\subsetneq P_{e-1}\subsetneq (\fp A+A^+_p)$ whose images in  the ring $(A/\fp A)_{\fp A+A^+_p}$ are all different. Since $\fp = P_i\cap A^{(p)}$ for every  $0\ls i\ls e-1$, the result follows.
\end{proof}

\subsection{Multi-graded Rees algebras}\label{multigrad2}

 In this subsection we describe a standard multi-graded structure for the Rees algebras of direct sums of modules.

%\begin{defn}\label{multiRees}
%Let $M_1,\ldots,M_p$ be $R$-modules having a rank and set $\S = \bigotimes_{i=1}^p\Sym(M_i)$, where the tensor product is taken over $R$. We define the {\it multi-Rees algebra} of the set $(M_1,\ldots, M_p)$ by
%$$\R(M_1,\ldots, M_p):=\S/\tau_R(\S).$$
%\end{defn}

%\begin{rem}\label{subFree2}
%As in Remark \ref{torsionless} one may always assume each of the modules $M_i$ in Definition \ref{multiRees} is torsion-free, and hence contained in a free module. Moreover,  as in Remark \ref{subFree1}, if  $M_i$ is contained in a free $R$-module $F_i\cong R^{r_i}$ for every $1\ls i\ls p $, then $\R(M_1,\ldots, M_p)$ is isomorphic to the image of the map $$\bigotimes_{i=1}^p\Sym(M_i)\to \bigotimes_{i=1}^p\Sym(F_i) \cong R[\{t_{i,j}\}_{1\ls i\ls p,\, 1\ls j\ls r_i}].$$
%\end{rem}

\begin{defn}\label{grading}
Let $M_1,\ldots,M_p$ be $R$-modules having a rank.  We define a natural standard $\NN^p$-graded structure on $\R(M_1\oplus \cdots \oplus M_p)$.  %We note that the algebras $\Sym(F_i)$ are isomorphic to polynomial rings with coefficient in $R$ in  $r_i$ variables. Therefore these algebras have a natural $\NN$-graded structures where all these variables have degree one.
By \cite[A2.2.c]{E} we have $$\Sym(M_1\oplus\cdots \oplus M_p)\cong \bigotimes_{i=1}^p\Sym(M_i),$$ and since each of the algebras $\Sym(M_i)$ has a  standard $\NN$-grading, we can combine  these to an $\NN^p$-grading of $\R(M_1\oplus \cdots \oplus M_p)$ by setting
$[\bigotimes_{i=1}^p\Sym(M_i)]_\fn=\bigotimes_{i=1}^p \Sym(M_i)_{n_i}. $
\end{defn}

\begin{prop}\label{iterate}
Let $M_1,\ldots, M_p$ be $R$-modules having a rank and set $\R'=\R(M_1\oplus\cdots\oplus M_{p-1})$. Then  there is a natural graded $\R'$-isomorphism $$\R(M_1\oplus\cdots \oplus M_p)\cong \R(M_p\otimes_R \R').$$
%\begin{enumerate}
%\item[$(1)$]  $\R(M_1,\ldots,,M_p)$ is isomorphic to $\R(M_1\oplus\cdots \oplus M_p)$ as graded $R$-algebras and thus the latter inherits an $\NN^p$-graded structure (Definition \ref{grading}).
%\item[$(2)$]  Set $\R'=\R(M_1,\ldots, M_{p-1})$, then  there is a natural graded $\R'$-isomorphism $$\R(M_1,\ldots, M_{p-1},M_p)\cong \R'(M_p\otimes_R \R').$$
%\end{enumerate}
\end{prop}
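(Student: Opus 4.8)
The plan is to unwind both sides of the claimed isomorphism to a common description as a subalgebra of a polynomial ring, using the structure results already recorded in the excerpt. Since each $M_i$ has a rank, so does $M_1\oplus\cdots\oplus M_p$ (its rank being $\sum\rank(M_i)$), and by Remark \ref{torsionless} we may replace each $M_i$ by $M_i/\tau_R(M_i)$ and assume each $M_i$ is torsion-free with a rank; likewise for $\R'$. By Remark \ref{subFree1}, writing $M_i\subseteq F_i\cong R^{r_i}$, the algebra $\R(M_1\oplus\cdots\oplus M_p)$ is the image of $\Sym(M_1\oplus\cdots\oplus M_p)\to\Sym(F_1\oplus\cdots\oplus F_p)\cong R[t_1,\ldots,t_r]$ with $r=\sum r_i$; and $\R'$ is the image of $\Sym(M_1\oplus\cdots\oplus M_{p-1})\to R[t_1,\ldots,t_{r'}]$ with $r'=r-r_p$. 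So the right-hand side $\R(M_p\otimes_R\R')$ makes sense once one knows $M_p\otimes_R\R'$ has a rank over $\R'$, which holds because $\R'$ has a rank over $R$ (its total ring of fractions pulls back from $Q(R)$ by Proposition \ref{info}(1)) and $M_p$ has a rank over $R$.

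The main computation is to identify $\R(M_p\otimes_R\R')$ concretely. First I would observe that $M_p\otimes_R\R'$, modulo its $\R'$-torsion, embeds in $F_p\otimes_R\R'\cong(\R')^{r_p}$, so by Remark \ref{subFree1} applied over the base ring $\R'$, the algebra $\R(M_p\otimes_R\R')$ is the image of $\Sym_{\R'}(M_p\otimes_R\R')\to\Sym_{\R'}((\R')^{r_p})\cong\R'[t_1,\ldots,t_{r_p}]$. Now use the base-change formula for symmetric algebras, $\Sym_{\R'}(M_p\otimes_R\R')\cong\Sym_R(M_p)\otimes_R\R'$, together with $\Sym_R(M_p)\otimes_R\Sym_R(M_1\oplus\cdots\oplus M_{p-1})\cong\Sym_R(M_1\oplus\cdots\oplus M_p)$ from \cite[A2.2.c]{E} (cf. Definition \ref{grading}). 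Tracking these identifications through the natural surjections onto the relevant polynomial-ring images shows the image of $\Sym_{\R'}(M_p\otimes_R\R')$ in $\R'[t_1,\ldots,t_{r_p}]\subseteq R[t_1,\ldots,t_r]$ coincides with the image of $\Sym_R(M_1\oplus\cdots\oplus M_p)$, i.e. with $\R(M_1\oplus\cdots\oplus M_p)$. The $\NN^p$-grading on the left, where the last coordinate records the $t$-degree of the polynomial-ring variables coming from $F_p$ and the first $p-1$ coordinates are the grading on $\R'$, matches the grading from Definition \ref{grading}; this makes the isomorphism graded over $\R'$.

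I expect the main obstacle to be bookkeeping with torsion and rank rather than anything deep: one must check that passing to torsion-free quotients and the corresponding embeddings into free modules is compatible on both sides, and that $M_p\otimes_R\R'$ really has a rank over $\R'$ so that the right-hand Rees algebra is even defined. A clean way to handle the latter is to localize at $Q(R)$: since $-\otimes_R Q(R)$ is exact and turns each $M_i$ and $\R'$ into free modules, $M_p\otimes_R\R'\otimes_{\R'}Q(\R')$ is free of rank $r_p$ over $Q(\R')$ (using $Q(\R')=\R'\otimes_R Q(R)$, which localizes further to the right total ring of fractions by Proposition \ref{info}(1)), and similarly the torsion of $\Sym$ on both sides is characterized by vanishing after tensoring with $Q(R)$, exactly as in the argument of Remark \ref{torsionless}. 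Once these compatibilities are in place, the isomorphism is just the canonical identification of the two images inside $R[t_1,\ldots,t_r]$, carrying the grading over, and the proposition follows.
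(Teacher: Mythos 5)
Your proposal is correct and takes essentially the same route as the paper's proof: both hinge on $\Sym_{\R'}(M_p\otimes_R\R')\cong\Sym_R(M_p)\otimes_R\R'$ together with $\Sym_R(M_1\oplus\cdots\oplus M_p)\cong\bigotimes_i\Sym_R(M_i)$, and both settle the torsion bookkeeping by tensoring with $Q(R)$ and using Proposition \ref{info}(1) to control $\Ass(\R')$, so your realization of the two Rees algebras as images inside $R[t_1,\ldots,t_r]$ via Remark \ref{subFree1} is just a repackaging of the paper's torsion-quotient argument. Two harmless slips worth fixing when you write it up: $\R'\otimes_R Q(R)$ is a localization of $\R'$ contained in, but in general not equal to, $Q(\R')$, and $M_p\otimes_R\R'$ has rank $\rank(M_p)$ over $\R'$, not $r_p$.
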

\begin{proof}
%We begin with (1). For every pair of $R$-modules $M$ and $N$, there exists a graded isomorphism $\Sym(M\oplus N)\cong \Sym(M)\otimes_R \Sym(N)$ \cite[A2.2.c]{E}, then the conclusion follows after modding out the $R$-torsion. We continue with (2).
 We claim that for any $R$-module $M$ with a rank we have $\tau_{\R'}(M\otimes_R \R')$ is equal to the image $T$ of $\tau_R(M)\otimes_R \R'$ in $M\otimes_{R} \R'$. First observe that by Proposition \ref{info}(1), $M\otimes_R \R'$ has  a rank as $\R'$-module and it is equal to $\rank (M)$. Now, consider a short exact sequence $$0\to \tau_R(M)\to M\to F\cong R^r\to 0.$$
By tensoring with $\R'$ it follows that $T$ contains  $\tau_{\R'}(M\otimes_R \R')$. On the other hand, $T$ has rank zero as $\R'$-module (since $\rank (\tau_R(M))=0$), then it must be $\R'$-torsion. The claim follows. We obtain  the following natural maps
\begin{align*}
\SS :=\bigotimes_{i=1}^p\Sym_R(M_i)&\xrightarrow{\text{\cite[A2.2.c]{E}}} \R'\otimes_R \Sym_R(M_p)\\
 &\stackrel{\text{\cite[A2.2.b]{E}}}{\cong}\Sym_{\R'}(M_p\otimes_{R} \R' )\\
&\,\,\,\,\,\,\,\,\,\,\xrightarrow{\text{onto}} \,\,\,\,\,\,\,\, \Sym_{\R'}(M_p\otimes_{R} \R' )/\tau_R( \Sym_{\R'}(M_p\otimes_{R} \R' ))\\
 &\,\,\,\,\,\,\,\,\,\stackrel{\text{claim}}{=} \,\,\,\,\,\,\,\,\R(M_p\otimes_{R} \R').
\end{align*}
Clearly the  kernel of the composition of these maps contains $\tau_R(\SS)$ and, since tensoring by $Q(R)$ leads to an monomorphism, this kernel  must be equal to $\tau_R(\SS)$. The result follows.

\end{proof}

%From now on we assume $M_i$ is contained in a finitely generated free module $F_i$ for every $i$.

\subsection{Main results about the analytic spread of modules} This subsection contains the main results of this section. We assume $(R,\fm,k)$ is a Noetherian local ring.

 The following is the main theorem of this section. This result, in particular, allows us to recover, and extend, the results in \cite[Lemma 4.7]{Hyr02}, \cite[5.5]{Lipman}, and \cite[2.3]{SUV}. %We include here detailed  algebraic proofs of these facts.

\begin{thm}\label{mainAS}
Let  $M$ and $N$ be $R$-modules having a rank. Then
$$ \max\{\ell(M)+\rank(N), \ell(N)+ \rank(M)\} \ls \ell(M\oplus  N)\ls \ell(M)+\ell(N).$$
%\begin{enumerate}
%\item[$(1)$] $\ell(M)+\ell(N)\gs \ell(M\oplus  N)\gs \max\{\ell(M)+\rank(N), \ell(M)+ \rank(N)\}$.
%\item[$(2)$]  $\ell(IM)\gs \max\{\ell(I)+%\rank(M)-1,\, \ell(M)\}$.
%\end{enumerate}
\end{thm}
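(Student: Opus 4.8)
The plan is to analyze the fiber cone $\FF(M\oplus N)=\R(M\oplus N)\otimes_R k$ directly using the $\NN^2$-graded structure on $\R(M\oplus N)$ from Definition \ref{grading}. Since $\R(M\oplus N)$ is standard $\NN^2$-graded with $[\R(M\oplus N)]_{\fn}=\Sym(M)_{n_1}\otimes_R\Sym(N)_{n_2}$ modulo torsion, tensoring with $k$ gives a standard $\NN^2$-graded $k$-algebra $\FF$ whose diagonal subalgebra $\FF^\Delta=\oplus_{n}[\FF]_{(n,n)}$ is precisely the fiber cone of $M\oplus N$ in the usual (single-graded) sense. The key dimension-theoretic input is that $\ell(M\oplus N)=\dim\FF$, and that by Proposition \ref{bijection} and Lemma \ref{dims}, $\dim\Proj^2\FF=\dim\Proj^1\FF^\Delta$ relates these to $\dim\FF-2$ under an appropriate non-vanishing hypothesis on the partial algebras $\FF^{(i)}$. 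So first I would set up these identifications carefully, reducing modulo the $R$-torsion and invoking Remark \ref{torsionless} to assume $M,N$ are submodules of free modules.

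For the \emph{upper bound} $\ell(M\oplus N)\le\ell(M)+\ell(N)$: I would observe that $\FF(M\oplus N)$ is generated over $k$ by the degree-$(1,0)$ piece $V_1:=\Sym(M)_1\otimes k$ and the degree-$(0,1)$ piece $V_2:=\Sym(N)_1\otimes k$ (images thereof in $\FF$), and that the subalgebra generated by $V_1$ surjects onto $\FF(M)=\R(M)\otimes k$ while the subalgebra generated by $V_2$ surjects onto $\FF(N)$. More precisely, the natural graded surjections $\Sym(M)\otimes k\twoheadrightarrow\FF$ restricted to the $\NN\times\{0\}$-part and the $\{0\}\times\NN$-part factor through $\FF(M)$ and $\FF(N)$ respectively. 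A Noether-normalization / generic-hyperplane-section argument then shows $\dim\FF\le\dim(k[V_1])+\dim(k[V_2])\le\ell(M)+\ell(N)$, since $\FF$ is a quotient of a fiber product or tensor-type algebra built from its two "coordinate subalgebras." Concretely I expect to use that if a finitely generated graded $k$-algebra $B$ is generated by $V_1\cup V_2$ and the subalgebras $B_1=k[V_1]$, $B_2=k[V_2]$ have dimensions $d_1,d_2$, then $\dim B\le d_1+d_2$ — this follows by choosing homogeneous systems of parameters for $B_1$ and $B_2$ and checking $B$ is module-finite over the subalgebra they generate.

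For the \emph{lower bound}: I want to produce, say, a chain realizing $\ell(M)+\rank(N)$ (the other term is symmetric). Using Lemma \ref{posFiber} with $A=\FF$, $p=2$, and taking $\fp\in\Proj^1\FF^{(2)}$ (where $\FF^{(2)}=\oplus_{n_2=0}[\FF]_{\fn}=\FF(M)$, up to the identification) to be a prime realizing $\dim\FF(M)$ suitably — actually I would pick $\fp$ a relevant prime with $\dim\FF(M)/\fp$ large — and then the module $W=Q(\FF^{(2)}/\fp)\otimes_{\FF^{(2)}}\FF$ captures the "$N$-direction fiber over the generic point of $\fp$." The point is that $W$ is a fiber cone of $N$ over a field, or at least contains $\R(N)$ tensored over an extension field, so $\dim W\ge\rank(N)$ by Proposition \ref{posAS} applied over the new base. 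Combining via Lemma \ref{posFiber}, I get a chain in $\Proj^2\FF$ of length contributing $\rank(N)$ lying over a chain of length $\ell(M)$ coming from $\Proj^1\FF^{(2)}=\Proj^1\FF(M)$, and assembling these (lifting the $\FF(M)$-chain and then extending in the $N$-direction) yields $\dim\Proj^2\FF\ge(\ell(M)-1)+\rank(N)$, hence $\dim\FF\ge\ell(M)+\rank(N)$ after adding back $p=2$ appropriately. I would also separately handle the trivial/degenerate cases (e.g.\ when $\Proj$ is empty, i.e.\ $M$ or $N$ of rank zero, or when $\ell$ equals its minimal value $\rank$, where Proposition \ref{posAS} gives the bound for free).

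The main obstacle I anticipate is the bookkeeping in the lower bound: correctly matching the multi-$\Proj$ dimension formulas (the "$-p$" shift in Lemma \ref{dims}) with the ordinary Krull dimensions of the fiber cones, and verifying the non-vanishing hypothesis "$\dim A^{(i)}<\dim A$" needed for the sharp equality in Lemma \ref{dims}(2) — or, if that fails, arguing with part (1) and an explicit maximal-dimensional prime instead. A secondary technical point is justifying that $W=Q(\FF(M)/\fp)\otimes\FF$ really does have dimension at least $\rank(N)$: this requires knowing that base-changing the Rees algebra of $N$ along $R\to\FF(M)/\fp\to Q(\FF(M)/\fp)$ (a field) behaves well with respect to torsion and rank, which should follow from Proposition \ref{iterate}, Proposition \ref{info}, and the fact that $\rank$ is preserved under the relevant base changes, but needs to be spelled out. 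Once these dimension translations are pinned down, the rest is the generic-hyperplane / system-of-parameters argument for the upper bound and chain-assembly for the lower bound.
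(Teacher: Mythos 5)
Your upper bound is essentially the paper's argument in a light disguise: the paper observes that $\FF(M\oplus N)$ is a quotient of $\FF(M)\otimes_k\FF(N)$ (kill torsion in $\Sym(M)\otimes_R\Sym(N)$, then tensor with $k$), so its dimension is at most $\ell(M)+\ell(N)$; your system-of-parameters/integrality argument for the subalgebras $k[V_1]$ and $k[V_2]$ (note the surjection goes $\FF(M)\twoheadrightarrow k[V_1]$, not the other way) proves the same bound and is fine. The problems are in the lower bound, at exactly the two places you flagged. First, the inequality $\dim W\gs\rank(N)$ cannot be obtained by applying Proposition \ref{posAS} ``over the field $Q(\FF(M)/\fp)$'': over a field, rank is just vector-space dimension of the wrong module, and $W$ is not exhibited as the Rees algebra or fiber cone of a module of rank $\rank(N)$ over that field. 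The paper gets this bound by working with the Rees algebra instead of the fiber cone: by Proposition \ref{iterate}, $\R(M\oplus N)\cong\R(N\otimes_R\R(M))$ over $\R(M)$; by Proposition \ref{info}(1) the $\R(M)$-module $N\otimes_R\R(M)$ has rank $\rank(N)$ (this uses that minimal primes of $\R(M)$ contract to minimal primes of $R$); and then Proposition \ref{posAS} applied over the \emph{local ring} $\R(M)_\fp$ gives $\dim Q(\R(M)/\fp)\otimes_{\R(M)}\R(M\oplus N)=\ell\big((N\otimes_R\R(M))_\fp\big)\gs\rank(N)$. Since your $\fp$ contains $\fm$, your $W$ agrees with this fiber, so the claim is true, but its proof must pass through the Rees algebra; your sketch only gestures at this.

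Second, and more seriously, the final ``chain assembly'' step fails as described. Having $P_0\subsetneq\cdots\subsetneq P_{e-1}$ in $\Proj^2\FF$ all contracting to $\fp$, you cannot ``lift the $\FF(M)$-chain'' and splice it onto this fiber chain: that would require going-down (or lying-over) for the inclusion $\FF(M)\hookrightarrow\FF(M\oplus N)$, which is neither flat nor integral, and you also lose control of contractions and of membership in $\Proj^2$. Your arithmetic confirms something is off: $\dim\Proj^2\FF\gs(\ell(M)-1)+\rank(N)$ combined with $\dim\FF\gs\dim\Proj^2\FF+2$ would give $\ell(M)+\rank(N)+1$, which is false already for $M=N=R$. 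The paper replaces chain assembly by a dimension-theoretic argument inside the affine domains $A=\R(M)/\fp\hookrightarrow B=\R(M\oplus N)/P_0$ (here $\fp\supseteq\fm$, so $\dim B\ls\ell(M\oplus N)$): by semicontinuity of fiber dimension (\cite[14.8(b)]{E}) every $\fp'\in\Proj^1 A$ has fiber of dimension $\gs\rank(N)$ in $B$; choosing $\fp'$ with $\dim A/\fp'=1$ avoiding a general element and a suitable $P_{e-1}'\in\Proj^2 B$ over it, the dimension formula \cite[14.5]{E} gives $\dim\Proj^1 A\ls\dim\Proj^2 B-\rank(N)+1$, whence $\ell(M\oplus N)\gs\dim B\gs\dim\Proj^2 B+2\gs\ell(M)+\rank(N)$. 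Some argument of this type (equivalently, additivity of transcendence degree, $\dim B=\dim A+\dim Q(A)\otimes_A B$ for affine domains over $k$) is what must replace your lifting of chains; without it the lower bound is not proved.
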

\begin{proof}
We may assume $M$ and $N$ are torsion-free and hence contained in free $R$-modules (Remark \ref{torsionless}). If either $M$ or $N$ has rank zero, then it has to be the zero module. Then we may assume  they both have positive rank. Consider the following natural surjective maps
 $$\R(M)\otimes_R \R(N)\xrightarrow{\alpha} \R(M)\otimes_R \R(N)/\tau_R(\R(M)\otimes_R \R(N))\xleftarrow{\beta} \Sym(M)\otimes_R \Sym(N).$$
Since $Q(R)\otimes_R \beta$ is an isomorphism and the image of $\beta$ is torsion-free, it follows that $\ker \beta\subseteq  \tau_R(\Sym(M)\otimes_R \Sym(N))\subseteq \ker \beta .$ Then we obtain a surjective map
\begin{align*}
\R(M)\otimes_R \R(N)&\,\,\,\,\,\,\,\,\,\,\xrightarrow{ \text{onto}}\,\,\,\,\,\,\,\,\,\,\Sym(M)\otimes_R \Sym(N)/\tau_R(\Sym(M)\otimes_R \Sym(N))\\
 & \stackrel{\text{\cite[A2.2.c]{E}}}{\cong} \Sym(M\oplus N)/\tau_R(\Sym(M\oplus N))\\
 &\,\,\,\,\,\,\,\,\,\,\,\,\,=\,\,\,\,\,\,\,\,\,\,\R(M\oplus N).
\end{align*}
By tensoring this map by $k$ we observe that $\FF(M\oplus N)$ is a quotient of $\FF(M)\otimes_k \FF(N)$, and since the latter is a tensor product of affine algebras, it has dimension $\dim \FF(M)+\dim \FF(N)=\ell (M)+\ell(N)$.  The  right-hand inequality follows.

We now show the left-hand inequality. Set $\R = \R(M\oplus N) $. Following the multi-grading in Definition \ref{grading} we have  $\R^{(1)}=\R(M)$. We also observe that $\R \cong \R(N')$,  where  $N'=N\otimes_R \R^{(1)}$ (Proposition \ref{iterate}).
Fix $\fp\in \Proj^1 \R^{(1)}$ such that  $\fp\cap R=\fm$ and $\dim \R^{(1)}/\fp= \ell(M)$, which exists by Lemma \ref{posFiber} and the fact that $\ell(M)\gs 1$ (Proposition \ref{posAS}). By Proposition \ref{info}(1), $(N')_\fp$  is an  $\R^{(1)}_\fp$-module with the same rank as  $N$; let $e$ be this  rank. Then, \begin{equation}\label{unxeq}
\dim Q(\R^{(1)}/\fp) \otimes_{\R^{(1)}} \R   = \ell((N')_\fp)\gs e  \hspace{0.5cm}\text{ (Proposition \ref{posAS})}.
\end{equation}
%The claim now follows by Lemma \ref{posFiber}. %From  $\varphi^*$ we obtain a surjection
%$$\Proj \R \otimes_R R/\fm \xrightarrowdbl{\quad\varphi^*\otimes_R R/\fm \quad} \Proj \F(M_1).$$
%Let  $P\in \Proj^1 \R^{(1)}$ be such that  $P\cap R=\fm$ and $\dim \R^{(1)}/P= \ell(M_1)>0$.
Therefore, by Lemma \ref{posFiber}  there exist  $P_0\subsetneq \cdots \subsetneq P_{e-1}$ in $\Proj^2 \R$ with
$P_i\cap \R^{(1)}  = \fp$ for every $0\ls i\ls e-1$. We have an inclusion of domains $A=\R^{(1)}/\fp \hookrightarrow B:=\R/P_0$ and  Lemma \ref{posFiber}  implies $\dim Q(A )\otimes_A B\gs e$. Hence,  $\dim Q(A /\fp')\otimes_A B\gs e$ for every $\fp'\in \Proj^1 A$ (\cite[14.8(b)]{E}).  Choose a $\fp'$ that avoids a general element of $A$ (cf. \cite[14.5]{E}) and that $\dim A/\fp' = 1$;  such $\fp'$ exists by Hilbert's Nullstellensatz. Additionally, choose $P_{e-1}'\in \Proj^2 B$ such that $P_{e-1}'\cap A=\fp'$ and its  image in $ Q(A /\fp')\otimes_A B$ has height $\gs e-1$ (Lemma \ref{posFiber}). Then, from \cite[14.5]{E} we obtain
 \begin{equation}\label{dimes}
 \dim \Proj^1 A =\dim A_{\fp'} =\dim B_{P_{e-1}'} - \dim  B_{P_{e-1}'}/\fp' B_{P_{e-1}'} \ls \dim \Proj^2 B -e+1.
 \end{equation}
Thus,
$$\ell(M\oplus N)=\dim \R\otimes_R k \gs \dim B\gs  \dim \Proj^2 B +2\gs  \dim \Proj^1 A+e+1 = \ell(M)+e.$$
Where the second inequality follows from Lemma \ref{dims}(1). Likewise, $\ell(M\oplus N)\gs \ell(N)+\rank (M)$,  finishing the proof.
%This concludes the proof of part (1).
 %
 %We continue with part (2). Rename $I$ and $M$,  as $M$ and $N$, in any order. We show that $\ell(MN)\gs \ell(M)+\rank(N)-1$. As before, set $\R = \R(M\oplus N) $, then  $\R^\Delta = \R(MN)$. From Proposition \ref{bijection} we obtain
%\begin{equation}\label{eqPr}
 %\dim \Proj^1 \F(MN)  =\dim \Proj^1 \R^\Delta \otimes_R k= \dim \Proj^2 \R \otimes_R k.
%\end{equation}
 %Thus,
%\begin{align*}
%\ell(M)-1=\dim \Proj^1 A &\ls \dim \Proj^2 B-e+1\\ &\ls \dim \Proj^2 \R\otimes_R R/\fm -e+1\\
%&= \dim \Proj^1 \F(M  N)-e+1=\ell(M_1 M_2)-e,
%\end{align*}
% where the first inequality from the left follows from Equation \eqref{dimes} and the second equality from Equation \eqref{eqPr}. The result follows
\end{proof}

\begin{rem}\label{decomRemark}
 Under the conditions of Theorem \ref{mainAS}, let us assume $R$ has infinite residue field. Then $\ell(M)=\ell(\overline M)$. Hence Theorem \ref{mainAS} applies when $\overline{M}$ is decomposable. %This property will be referred as {\it integrally decomposable} in Section \ref{WD}.
\end{rem}

In the following corollary we observe that if a module satisfies equality in one of the inequalities in Proposition \ref{posAS}, then we obtain a closed formula for the analytic spread of its direct sum with any other module.

\begin{cor}\label{cormainAS}
Let $M$ and $N$ be $R$-modules having a rank.
\begin{enumerate}
\item[$(1)$] If $\ell(N)=\rank(N)$, then $\ell(M\oplus N)=\ell(M)+\rank(N)$.
\item[$(2)$] If $\ell(N)=\dim R+\rank(N)-1$, then
$\ell(M\oplus N)=\dim R+\rank (M)+\rank (N)-1$.\end{enumerate}
\end{cor}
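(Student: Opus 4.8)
The plan is to derive Corollary \ref{cormainAS} directly from Theorem \ref{mainAS} together with the bounds on analytic spread recorded in Proposition \ref{posAS}; no new machinery should be needed, only a squeeze argument in each case.

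For part (1), assume $\ell(N)=\rank(N)$. Theorem \ref{mainAS} gives the lower bound
$$\ell(M\oplus N)\gs \max\{\ell(M)+\rank(N),\ \ell(N)+\rank(M)\}\gs \ell(M)+\rank(N),$$
and the upper bound
$$\ell(M\oplus N)\ls \ell(M)+\ell(N)=\ell(M)+\rank(N).$$
Comparing the two yields $\ell(M\oplus N)=\ell(M)+\rank(N)$, as claimed.

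For part (2), assume $\ell(N)=\dim R+\rank(N)-1$. The upper bound in Theorem \ref{mainAS}, combined with Proposition \ref{posAS} applied to $M$, gives
$$\ell(M\oplus N)\ls \ell(M)+\ell(N)\ls \big(\dim R+\rank(M)-1\big)+\big(\dim R+\rank(N)-1\big),$$
which is not yet sharp, so instead one applies Proposition \ref{posAS} to the module $M\oplus N$ itself, which has rank $\rank(M)+\rank(N)$: this gives the a priori upper bound
$$\ell(M\oplus N)\ls \dim R+\rank(M)+\rank(N)-1.$$
On the other hand the lower bound in Theorem \ref{mainAS} gives
$$\ell(M\oplus N)\gs \ell(N)+\rank(M)=\dim R+\rank(N)-1+\rank(M),$$
and the two bounds coincide, forcing $\ell(M\oplus N)=\dim R+\rank(M)+\rank(N)-1$.

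There is no real obstacle here: the only subtlety is remembering to use Proposition \ref{posAS} for the \emph{direct sum} $M\oplus N$ (whose rank is additive, by Proposition \ref{info}(2) or directly) rather than summing the individual upper bounds, since the latter would be off by $\dim R-1$. I would state this explicitly so the reader sees why the right-hand inequality of Theorem \ref{mainAS} alone does not suffice for part (2).
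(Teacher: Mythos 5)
Your argument is correct and is essentially the paper's own proof: both cases follow by squeezing the bounds of Theorem \ref{mainAS} against Proposition \ref{posAS} applied to $M\oplus N$, using $\rank(M\oplus N)=\rank(M)+\rank(N)$. One small quibble: rank additivity is not what Proposition \ref{info}(2) asserts (that is the dimension formula for the Rees algebra), but as you note it holds directly, e.g.\ by tensoring with $Q(R)$, so nothing is lost.
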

\begin{proof}
The conclusion follows from Theorem \ref{mainAS}, Proposition \ref{posAS}, and the fact that $\rank(M\oplus N)=\rank(M)+\rank (N)$.
\end{proof}

\begin{rem}
We note that the equality $\ell(N)=\dim R+\rank(N)-1$ is satisfied in a variety of situations. For example, if $N$ is torsion-free and $F/N$ has finite length for some free $R$-module $F$ (\cite[8.4]{V}); if $N$ is an {\it ideal module} (i.e., $N$ is torsion-free and $\Hom_R(N,R)$ is free), and such that $N_\fp$ is free for any $\fp\in \Spec(R)\setminus \{\fm\}$ (\cite[5.2]{SUV}); and if $R$ is a two-dimensional local normal domain with infinite residue field and $N$ is not free (\cite[page 418]{V}).

 The equality $\ell(N)=\rank(N)$ trivially holds for any free $R$-module.
\end{rem}

In the following corollary we relate the analytic spread of direct sums and products of ideals and modules. We remark that the estimates for the analytic spread  in \cite[6.5 6.8]{BS1} and \cite[5.9]{BS2} follow from our next result.

\begin{cor}\label{ideals}
Let $I_1,\ldots, I_{p-1}$ be $R$-ideals  for some $p\gs 1$ and let $M$ be an $R$-module, all of positive rank. Then
\begin{align*}
\ell(I_1\cdots I_{p-1} M)+p-1&=\ell(I_1\oplus \cdots \oplus I_{p-1}\oplus M)\\
&\gs \max_{1\ls i\ls p-1}\{\ell(I_i)+\rank (M)-1,\ell(M)\}+p-1.
\end{align*}
\end{cor}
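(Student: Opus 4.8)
The plan is to deduce the displayed chain of (in)equalities from Theorem \ref{mainAS} together with the known behavior of analytic spread under multiplication of ideals by a module. First I would establish the equality
$$\ell(I_1\oplus\cdots\oplus I_{p-1}\oplus M)=\ell(I_1\cdots I_{p-1}M)+p-1.$$
The key observation is that each $I_j$ has rank one (it has positive rank and is an ideal, hence its rank as a module over $Q(R)$ is at most one, and positive rank forces it to be exactly one). Thus $\rank(I_1\oplus\cdots\oplus I_{p-1}\oplus M)=(p-1)+\rank(M)$. I would argue that the fiber cone of the direct sum $I_1\oplus\cdots\oplus I_{p-1}\oplus M$ can be identified, up to adding $p-1$ to the dimension, with the fiber cone of the product module $I_1\cdots I_{p-1}M$. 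Concretely, the Rees algebra of the direct sum carries the standard $\NN^{p}$-grading of Definition \ref{grading}, and its "diagonal" (total-degree) piece recovers the Rees algebra of the product $I_1\cdots I_{p-1}M$; passing to fiber cones and invoking the relation $\dim\Proj^p=\dim(\text{total algebra})-p$ in the spirit of Lemma \ref{dims} and Proposition \ref{bijection} gives the shift by $p-1$. (Alternatively, one can iterate the elementary fact that for an ideal $I$ of positive rank and a module $N$ of positive rank one has $\ell(I\oplus N)=\ell(IN)+1$, which is the $p=2$ case of what is being proved and is exactly \cite[5.5]{Lipman}/\cite[Lemma 4.7]{Hyr02} generalized; Theorem \ref{mainAS} and Remark \ref{decomRemark} reduce everything to this.) I would set this up as an induction on $p$, the base case $p=1$ being the trivial identity $\ell(M)=\ell(M)$.

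Next I would prove the inequality
$$\ell(I_1\oplus\cdots\oplus I_{p-1}\oplus M)\ \gs\ \max_{1\ls i\ls p-1}\{\ell(I_i)+\rank(M)-1,\ \ell(M)\}+p-1.$$
For the $\ell(M)$ term, apply the left-hand inequality of Theorem \ref{mainAS} repeatedly: writing the direct sum as $I_1\oplus(\text{rest})$ and using that $\rank$ is additive, one gets $\ell(\bigoplus) \gs \ell(M)+\rank(I_1\oplus\cdots\oplus I_{p-1})=\ell(M)+(p-1)$. For the term $\ell(I_i)+\rank(M)-1$, fix $i$ and group the direct sum as $I_i\oplus N_i$ where $N_i$ is the direct sum of $M$ and the remaining $p-2$ ideals, so $\rank(N_i)=\rank(M)+(p-2)$. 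Theorem \ref{mainAS} gives $\ell(I_i\oplus N_i)\gs \ell(I_i)+\rank(N_i)=\ell(I_i)+\rank(M)+p-2=\bigl(\ell(I_i)+\rank(M)-1\bigr)+(p-1)$. Taking the maximum over $i$ and combining with the first bound yields the claim. Since this holds for every $i$, the stated $\max$ follows.

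The main obstacle I anticipate is the first equality — identifying the fiber cone of the $p$-fold direct sum with that of the product, precisely tracking the grading shift of $p-1$. The cleanest route is the inductive one through the $p=2$ case: for an ideal $I$ and module $N$ of positive rank, $\R(I\oplus N)$ has a standard $\NN^2$-grading with $\R(I\oplus N)^{(2)}=\R(I)$ and (by Proposition \ref{iterate}) $\R(I\oplus N)\cong\R(N\otimes_R\R(I))$; chasing through $\Proj^2$ via Proposition \ref{bijection} and Lemma \ref{dims}, and noting that $N\otimes_R\R(I)$ has the same rank as $N$ (Proposition \ref{info}(1)), one extracts $\ell(I\oplus N)=\ell(IN)+1$. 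Everything else is bookkeeping with ranks (each $I_j$ has rank exactly one) and iterated application of Theorem \ref{mainAS}; no further difficulty is expected.
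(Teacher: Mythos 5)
Your proposal is correct and follows essentially the same route as the paper: the inequality is read off directly from Theorem \ref{mainAS} (exactly as you group the summands), and the equality is obtained from the $\NN^{p}$-graded fiber cone $A=\FF(I_1\oplus\cdots\oplus I_{p-1}\oplus M)$, whose diagonal is $\FF(I_1\cdots I_{p-1}M)$, via Lemma \ref{dims}(2) and Proposition \ref{bijection}. The only detail to make explicit is the hypothesis of Lemma \ref{dims}(2), namely $\dim A^{(i)}<\dim A$ for every $i$, which the paper deduces from Theorem \ref{mainAS} because each omitted summand has positive rank.
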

\begin{proof}
As the inequality follows directly from Theorem \ref{mainAS}, it suffices to show the equality.

We may assume $M$ is torsion-free  (Remark \ref{torsionless}).
We proceed by induction on $p\gs 1$, the case $p=1$ being  clear. Now, assume $p\gs 2$ %and set $E =I_1\oplus \cdots \oplus I_p\oplus M$.
and set $A=\FF(I_1\oplus \cdots \oplus I_{p-1}\oplus M)$. Notice that   $A$  has a natural $\NN^{p}$-graded structure (Definition \ref{grading}). Moreover,  $A^{\Delta}=\FF(I_1\cdots I_{p-1}M)$,  $A^{(i)}=\FF(I_1\oplus\cdots\oplus I_{i-1}\oplus I_{i+1} \oplus\cdots \oplus I_{p-1}\oplus M)$ for every $1\ls i\ls p-1$, and $A^{(p)}=\FF(I_1\oplus \cdots \oplus I_{p-1})$. Hence,  $\dim A^{(i)} <\dim A$ for every $1\ls i\ls p$ (Theorem \ref{mainAS}). Therefore, by Lemmas \ref{dims}(2) and \ref{bijection} we have
$$\dim A = \dim \Proj^{p} A + p  =\dim \Proj^{1} A^\Delta + p  = \dim A^\Delta +p-1,$$ and the result follows.
\end{proof}

The following example extends \cite[8.6]{V}. Here we are able to provide a formula for the analytic spread of a certain class of modules.

\begin{ex}
Let $A_1,\ldots, A_p$ be standard graded $k$-algebras and  for each $i=1,\ldots,p $ let $I_i$ be an $R_i$-ideal of positive rank and generated by elements of degree $\delta_i$. Consider $A=A_1\otimes_k \cdots \otimes_k A_p$ and identify each $I_i$ with its image in $A$. Then   $I_1\cdots I_p$ is generated in degree $\delta_1+\cdots +\delta_p$ and its  minimal number of generators is the dimension of the $k$-vector space $[I_1]_{\delta_1}\otimes_k\cdots \otimes_k
 [I_p]_{\delta_p}$, i.e., $\prod_{i=1}^p \dim_k [I_i]_{\delta_i}=\prod_{i=1}^p\mu(I_i)$, where $\mu(-)$ denotes minimal number of generators. Likewise, for every $n\in \NN$, we have $\mu((I_1\cdots I_p)^n)=\prod_{i=1}^p\mu(I_i^n)$. Hence, $\ell(I_1\cdots I_p)-1=(\ell(I_1)-1)+\cdots+(\ell(I_p)-1)$ (\cite[4.1.3]{BH}). From Corollary \ref{ideals} we conclude that
$$\ell(I_1\oplus\cdots \oplus I_p)=\ell(I_1)+\cdots+\ell(I_p).$$
\end{ex}

In the following corollary we recover, and slightly extend, %for (2),
the results in \cite[5.5]{Lipman} (see also \cite[8.4.4]{HS}) and \cite[Lemma 4.7]{Hyr02}). We  recall that the analytic spread of an ideal is defined as $\ell(I)=\dim \R(I)\otimes_R k$ regardless of any rank assumption. %, and \cite[2.3]{SUV}.

\begin{cor}
Let $I$ and $J$ be $R$-ideals (not necessarily with a rank).  Then
\begin{enumerate}
\item[$(1)$]  If $I$ or $J$ is not nilpotent, then
$\ell(I)+\ell(J)>\ell(IJ)$.
\item[$(2)$]  If $IJ$ has positive height, or $\sqrt{I}=\sqrt{J}$, then $\ell(IJ)\gs \max\{\ell(I),\ell(J)\}$.
%\item[$(3)$]  If $I$ is not nilpotent, then   $\ell(IM)\gs \ell(I)+\rank(M)$.
\end{enumerate}
\end{cor}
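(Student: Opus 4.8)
The strategy is to reduce both statements to the module results of this section, most directly to Corollary~\ref{ideals}, via the standard device of passing to a ring where the ideals acquire positive rank. For part~(1), the natural attempt is to invoke Corollary~\ref{ideals} with $p=3$, $I_1=I$, $I_2=J$ and $M$ a free module of rank one, which would give $\ell(I\oplus J\oplus R)=\ell(IJ)+2$ and hence $\ell(IJ)=\ell(I\oplus J\oplus R)-2\le \ell(I)+\ell(J)+1-2=\ell(I)+\ell(J)-1<\ell(I)+\ell(J)$ by the right-hand inequality of Theorem~\ref{mainAS}; but this requires $I,J$ to have positive rank, which is not assumed. So the first step is to arrange positive rank without changing analytic spreads: pass from $R$ to $R':=R[x]_{\fm R[x]}$ (or to $R/\fp$ for a suitable minimal prime $\fp$ with $\dim R/\fp=\dim R$ and then adjoin the indeterminate), noting that $\ell(IR')=\ell(I)$, $\ell(JR')=\ell(J)$, $\ell(IJR')=\ell(IJ)$, since the fiber cone is unchanged up to a polynomial/localization that does not affect Krull dimension. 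If $I$ (say) is not nilpotent, then after this reduction $I$ still has positive height and in particular positive rank in the domain, and Corollary~\ref{ideals} applies.

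For part~(2), once $IJ$ has positive height the ideals $I$ and $J$ individually have positive height, hence positive rank, and the same reduction is unnecessary. Then Corollary~\ref{ideals} with $p=2$ gives $\ell(IJ)+1=\ell(I\oplus J)$, and the left-hand inequality of Theorem~\ref{mainAS} (applied with $M=I$, $N=J$, both of rank one) yields $\ell(I\oplus J)\ge \max\{\ell(I)+\rank(J),\ell(J)+\rank(I)\}=\max\{\ell(I),\ell(J)\}+1$, whence $\ell(IJ)\ge\max\{\ell(I),\ell(J)\}$. In the case $\sqrt I=\sqrt J$ (without height assumption), I would first dispose of the trivial nilpotent case and then reduce modulo a minimal prime as above so that $I,J$ have positive rank (here $\sqrt I=\sqrt J$ is preserved), and again apply Corollary~\ref{ideals} and Theorem~\ref{mainAS}. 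Alternatively, part~(2) under $\sqrt I=\sqrt J$ can be seen more elementarily: a minimal reduction argument shows $\ell(IJ)=\ell(I^2)$ type estimates, but routing everything through Theorem~\ref{mainAS} keeps the argument uniform.

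\textbf{Main obstacle.} The genuine difficulty is purely bookkeeping about rank: Corollary~\ref{ideals} and Theorem~\ref{mainAS} are stated under positive-rank hypotheses, while the corollary here explicitly allows ideals \emph{without} a rank. So the crux is to justify carefully that the faithfully-flat base change $R\to R[x]_{\fm R[x]}$ (or the quotient by a suitable minimal prime, or a combination) preserves all three analytic spreads and turns the not-nilpotent/positive-height/equal-radical hypotheses into positive rank. Once that reduction is in place, both inequalities are immediate consequences of Corollary~\ref{ideals} together with the two sides of Theorem~\ref{mainAS}, with the only remaining care being the edge case where one of $I$, $J$ is the zero ideal or nilpotent (for part~(1) the hypothesis rules out both being nilpotent, and one checks the inequality directly when exactly one is nilpotent).
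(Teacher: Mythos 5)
Your high-level plan is the paper's plan (reduce to ideals of positive rank over a domain and then quote Theorem \ref{mainAS} and Corollary \ref{ideals}), but the reduction step, which is where all the content of this corollary lies, does not work as you describe it. First, the base change $R\to R':=R[x]_{\fm R[x]}$ is faithfully flat and preserves the associated primes of $R$, so an ideal consisting of zerodivisors still consists of zerodivisors in $R'$; this trick only enlarges the residue field and never creates positive rank, so it is useless for your purpose. Second, the claim that for a ``suitable'' minimal prime $\fp$ with $\dim R/\fp=\dim R$ one has $\ell(I(R/\fp))=\ell(I)$, $\ell(J(R/\fp))=\ell(J)$ and $\ell(IJ(R/\fp))=\ell(IJ)$ simultaneously is false: what is true is only $\ell(K)=\max_{\fp\in\Min(R)}\ell(K(R/\fp))$ (see \cite[5.1.7]{HS}), and the maxima for $I$, $J$, $IJ$ may be attained at different minimal primes. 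For example, in $R=k[[x,y]]/(xy)$ with $I=(x)$, $J=(y)$, every minimal prime kills one of the two ideals, so no single quotient gives both ideals positive rank; moreover here both $I$ and $J$ are non-nilpotent while $IJ=0$, a configuration your proposal cannot reach by any such reduction and which is not covered by your ``exactly one of them nilpotent'' edge case. Third, in part (2) the implication ``$IJ$ has positive height $\Rightarrow$ $I$ and $J$ have positive rank'' fails when $R$ has embedded primes (positive rank requires avoiding all associated primes, not just the minimal ones; e.g.\ $(y)\subseteq k[x,y]_{(x,y)}/(x^2,xy)$ has positive height but no rank), so applying Corollary \ref{ideals} directly in $R$ there is not justified either.

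The correct mechanism, which is what the paper uses, is to exploit \cite[5.1.7]{HS} only through the inequalities it provides and to choose the minimal prime according to the goal. For (1): dispose of the case where one ideal is nilpotent; for $\fp\not\supseteq IJ$ both $I(R/\fp)$ and $J(R/\fp)$ are nonzero ideals of the domain $R/\fp$, hence of positive rank, and Corollary \ref{ideals} together with the right-hand inequality of Theorem \ref{mainAS} gives $\ell(IJ(R/\fp))<\ell(I(R/\fp))+\ell(J(R/\fp))\ls\ell(I)+\ell(J)$; for $\fp\supseteq IJ$ one has $\ell(IJ(R/\fp))=0<\ell(I)+\ell(J)$; taking the maximum over $\fp\in\Min(R)$ gives the strict inequality for $\ell(IJ)$. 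For (2): pick $\fp$ with $\ell(I)=\ell(I(R/\fp))$; the hypothesis (positive height of $IJ$, or $\sqrt I=\sqrt J$) guarantees $J(R/\fp)\neq 0$ whenever $I(R/\fp)\neq 0$, so Corollary \ref{ideals} in the domain $R/\fp$ yields $\ell(IJ)\gs\ell(IJ(R/\fp))\gs\ell(I(R/\fp))=\ell(I)$, and symmetrically for $J$. You need to replace your ``preservation under one quotient or base change'' step by this argument (or an equivalent one) for the proof to be complete.
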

\begin{proof}
For (1), assume $I$ is not nilpotent. If $J$ is nilpotent, i.e., $\ell(J)=0$, the inequality clearly holds. Otherwise, for any $\fp$ minimal prime of $R$ that does not contain $IJ$ we have $$\ell(I)+\ell(J)\gs \ell(I(R/\fp))+\ell(J(R/\fp))>\ell(IJ(R/\fp))$$
where the first inequality follows from \cite[5.1.7]{HS} and the second one from Theorem \ref{mainAS} and Corollary \ref{ideals}. The result then follows from  \cite[5.1.7]{HS}. Similarly, for (2), let $\fp$ be a minimal prime of $R$ such that $\ell(I)=\ell(I(R/\fp))$ (\cite[5.1.7]{HS} ), then $$\ell(IJ)\gs \ell(IJ(R/\fp))\gs \ell(I(R/\fp))=\ell(I),$$
where the second inequality follows from Corollary \ref{ideals}. Likewise, $\ell(IJ)\gs \ell(J)$, and the result follows.
\end{proof}

Our results allow us to build a minimal reduction of a direct sum of multiple copies of an ideal $I$ as we show in the next corollary. This result extends \cite[8.67]{V} to arbitrary ideals. Moreover, the computation of integral closure in \cite[3.5]{Kod} follows from this result.

Given elements $a_1,\dots, a_s\in R$ and an integer $p\geq 1$, we define the matrix
$$
A^p(a_1,\ldots, a_s):=\begin{pmatrix}
a_1&a_2&a_3&\cdots &0&0&0\\
0&a_1&a_2&\cdots &0&0&0\\
\vdots&\vdots&\vdots&\ddots&\vdots&\vdots&\vdots\\
0&0&0&\cdots&a_{s-1}&a_s&0\\
0&0&0&\cdots&a_{s-2}&a_{s-1}&a_s
\end{pmatrix}.
$$

\begin{cor}\label{reducDSum}
Let $I$ be an $R$-ideal of positive rank  and let $s$ be its  analytic spread. Fix $p\in \ZZ_{>0}$ and consider the $R$-module $$M=\underbrace{I\oplus\cdots\oplus I}_{p   \text{ times}}.$$ Then, $\ell(M)=s+p-1$ and given any (minimal) reduction  $(a_1,\ldots, a_s)\subseteq I$, the $R$-submodule of $R^p$ generated by the columns of the matrix $A^p(a_1,\ldots, a_s)$ is a minimal reduction of $M$.
\end{cor}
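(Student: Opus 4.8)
The plan is to establish two things: first, the formula $\ell(M) = s + p - 1$, and second, that the submodule $U$ generated by the columns of $A^p(a_1,\ldots,a_s)$ is a reduction of $M$ with exactly $\ell(M)$ generators, which forces it to be a minimal reduction. The analytic spread computation is immediate: since $I$ has positive rank, $\rank(M) = p$, and since $(a_1,\ldots,a_s)$ is a reduction of $I$ we have $\ell(I) = s$; now Corollary~\ref{ideals} (with $I_1 = \cdots = I_{p-1} = I$ and the last module also equal to $I$) gives $\ell(M) = \ell(I \oplus \cdots \oplus I) = \ell(I^p) + p - 1$, and since $\mu(I^{pn})$ grows like $n^{s-1}$ one gets $\ell(I^p) = \ell(I) = s$, so $\ell(M) = s + p - 1$. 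Note that $A^p(a_1,\ldots,a_s)$ is a $p \times (s+p-1)$ matrix, so $U$ is generated by exactly $s + p - 1 = \ell(M)$ elements; thus it only remains to prove $U$ is a reduction of $M$, and minimality is then automatic from Remark~\ref{sameRank}(2).

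To show $U$ is a reduction of $M$, I would work inside the Rees algebra $\R(M) \subseteq \Sym(R^p) = R[t_1,\ldots,t_p]$, where the $i$-th copy of $I$ contributes $I t_i$. Under this identification the columns of $A^p(a_1,\ldots,a_s)$ correspond to the elements $a_1 t_j + a_2 t_{j+1} + \cdots$ — more precisely, column $k$ (for $1 \le k \le s+p-1$) is supported on rows $\max(1,k-s+1)$ through $\min(p,k)$ with entries $a_{k-j+1}$ in row $j$. The key point is the generating set of $[\R(M)]_n$: it consists of all monomials $c\, t_1^{n_1}\cdots t_p^{n_p}$ with $n_1+\cdots+n_p = n$ and $c \in I^{n_1}\cdots I^{n_p} = I^n$. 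Since $(a_1,\ldots,a_s)$ reduces $I$, for large $n$ we have $I^n = (a_1,\ldots,a_s) I^{n-1}$, so every such monomial lies in $(a_1,\ldots,a_s)\,[\R(M)]_{n-1}$. The heart of the argument is to check that each product $a_\ell \cdot (\text{monomial of degree } n-1)$ can in fact be rewritten as a $\R(M)$-combination of the column elements of $A^p(a_1,\ldots,a_s)$ times elements of $[\R(M)]_{n-1}$ — this is a Toeplitz/convolution bookkeeping argument: the column elements span, within $[\R(M)]_1$ modulo lower-index interactions, enough of the degree-one piece that multiplying by the $I^{n-1}$-part of a lower-degree element recovers $a_\ell t_1^{n_1}\cdots t_p^{n_p}$.

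Concretely, I would prove by descending/ascending induction on the "spread" of the exponent vector (or directly) that for any $\mathbf n$ with $|\mathbf n| = n$ and any $\ell$, the element $a_\ell\, t^{\mathbf n}$ lies in $U\,[\R(M)]_{n-1}$, by isolating the leftmost occupied coordinate and peeling off the appropriate column of $A^p$ whose top entry matches; the "staircase" shape of $A^p(a_1,\ldots,a_s)$ is exactly engineered so that the columns, read against a monomial $t^{\mathbf n}$, reproduce the shift pattern $a_\ell t^{\mathbf n} = (\text{column}) \cdot t^{\mathbf n'}$ up to terms with strictly smaller leftmost coordinate, which are handled inductively. Combining this with $I^n = (a_1,\ldots,a_s)I^{n-1}$ for $n \gg 0$ yields $[\R(M)]_n = U[\R(M)]_{n-1}$ for $n \gg 0$, which is precisely the condition (Definition~\ref{theDefn}(3)) that $U$ is a reduction of $M$.

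The main obstacle I anticipate is the combinatorial verification in the last step: making the Toeplitz-convolution rewriting precise and checking that the induction on the leftmost occupied coordinate genuinely closes, so that no monomial is left uncovered and no spurious terms outside $U[\R(M)]_{n-1}$ appear. Everything else — the analytic spread formula, the count of generators, and the deduction of minimality — is a direct application of the results already proved (Corollary~\ref{ideals}, Proposition~\ref{posAS}, Remark~\ref{sameRank}).
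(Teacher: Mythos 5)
Your computation of the analytic spread and the deduction of minimality are fine and coincide with the paper's argument: $\ell(M)=\ell(I^p)+p-1=s+p-1$ by Corollary \ref{ideals}, and a reduction generated by $s+p-1=\ell(M)$ elements is minimal by Remark \ref{sameRank}(2). The gap is in the one substantive step, namely that the column module $U$ of $A^p(a_1,\ldots,a_s)$ is a reduction of $M$. You correctly reduce this to showing $[\R(M)]_n=U[\R(M)]_{n-1}$ for $n\gg 0$, but the ``Toeplitz peeling'' you describe --- match $a_\ell$ against the column whose top entry is $a_\ell$ at the leftmost occupied coordinate, and induct on that coordinate --- does not close as stated. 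Two concrete problems appear already for $p=s=2$, where the columns are $u_1=a_1t_1$, $u_2=a_2t_1+a_1t_2$, $u_3=a_2t_2$. First, for a term $a_1c\,t_2^{\,n}$ there is no column whose topmost nonzero entry is $a_1$ in row $2$, so the recipe has no move; you must use $u_2$, which reintroduces the term $a_2c\,t_1t_2^{\,n-1}$ supported further to the \emph{left}, so the leftmost occupied coordinate does not decrease (indeed peeling preserves the sum of the subscript of $a$ and the row index, so neither the coordinate nor the subscript alone is a valid induction measure). Second, if you then treat $a_2c\,t_1t_2^{\,n-1}$ at its leftmost coordinate via $u_2$, you recover $a_1c\,t_2^{\,n}$ and the procedure cycles; the correct move is to match $a_2$ at position $2$ using $u_3$, i.e.\ at a coordinate that is not the leftmost. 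The identity you are after is true, but it requires a genuinely more careful well-ordering or choice of column (or an induction on $p$, splitting off a row), and as written the heart of your proof is missing.

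For comparison, the paper bypasses this combinatorics entirely: by Bruns--Vetter, the ideal of maximal minors of the staircase matrix is $\I_p(U)=(a_1,\ldots,a_s)^p$, whose integral closure equals $\overline{I^p}=\overline{\I_p(M)}$ because $(a_1,\ldots,a_s)$ is a reduction of $I$; since adjoining any $h\in M$ to $U$ does not change the rank $p$ nor the integral closure of the ideal of $p\times p$ minors, Theorem \ref{icm} (equivalently Corollary \ref{IrMh}) gives $M\subseteq\overline U$ at once. Importing that determinantal identity is the cheapest way to close the gap while keeping the rest of your argument intact.
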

\begin{proof}
Let $U$ be  the module generated by the columns of this matrix and notice that $U\subseteq M$. We first show that $U$ is a reduction of $M$. For this, note that by \cite[page 15]{BV}, $\I_p(U)=I^p$, and the latter is clearly also equal to $\I_p(M)$. By Theorem \ref{icm}, %\cite[1.2]{Rees87},
 it follows that $U$ is a reduction of $M$.

It remains to show $\ell(M)=s+p-1$, but this follows from  Corollary \ref{ideals} since $$\ell(M)=\ell(I^p)+p-1=s+p-1,$$
finishing the proof.
\end{proof}

\begin{ex}
Let $I$ be a monomial ideal of $\O_2$.
Let $\Gamma_+(I)$ denote the Newton polyhedron of $I$ (see the definition of this notion before Example \ref{exidcd})
and let $\{(a_1,b_1),\, (a_2,b_2),\,\ldots,\, (a_n,b_n)\}\subset \NN^2$
be the set of vertices of $\Gamma_+(I)$, with $n\geq 2$ and $a_1<a_2<\cdots<a_n$ and $b_1>b_2>\cdots>b_n$. Consider the polynomials
of $\C[x,y]$ given by
$$
g_1=\displaystyle\sum_{i \text{ is  odd}}x^{a_i}y^{b_i} \hspace{0.7cm} \textnormal{and}\hspace{0.7cm} g_2=\displaystyle\sum_{i \text{ is  even}}x^{a_i}y^{b_i}.$$
By \cite{BFS} (see also \cite[3.6]{CQ} or \cite[3.7]{CL}), the ideal $(g_1,g_2)$ is a reduction of $I$. Thus, by Corollary \ref{reducDSum}, the module generated by the columns of $A^p(g_1,g_2)$ is a minimal reduction of the module $M=I\oplus\cdots \oplus I\subset \O_2^p$.
\end{ex}

%%%%%%%%%%%%%%%%%%%%%%%%%%%%%%%%%%%%%%%%%%%%%%%%%%%%%%%%%%%%%%%%%%%%%%%%%%%%%%%%%%%%%%%%%%%%%%%%%%%%%%%%%%%%%%%%%%%%%%%%

\section{Integrally decomposable modules, Newton non-degeneracy, and the computation of the integral closure}\label{WD}

In this section we address the task of computing the integral closure of modules.  In general, this is  a difficult and involved process as it requires the computation of the normalization of Rees algebras. In our main results we focus on a  wide family of modules, that we call {\it integrally decomposable}, for which an important example are the {\it Newton non-degenerate} modules (see Definitions \ref{WDdef} and \ref{NNDm}). In our main results, we express the integral closure of these modules in terms of the integral closure of  its component ideals (see  Theorem \ref{WDcentral} and Corollary \ref{icmcentral}). Therefore, we translate the problem of computing integral closures of modules to integral closures of ideals, for which several algorithms are available in the literature (see for instance  \cite[Chapter 6]{V2}). %We  implement these ideas in an algorithm (see Algorithm \ref{theAlg}).

Throughout this section $R$ is a Noetherian ring.

\subsection{Integrally decomposable modules}

Let $M$ be a submodule of $R^p$ and let $r=\rank(M)$. We identify $M$ with any matrix of generators and denote by $\Lambda_M$ the set of vectors $(i_1,\dots, i_r)\in \Z^r_{>0}$ such that
$1\leq i_1<\cdots <i_r\leq p$ and there exists some non-zero minor of $M$ formed from rows $i_1,\dots, i_r$.

\begin{defn}\label{WDdef}
Let $M$ be submodule of $R^p$ and let $r=\rank(M)$. We say that $M$ is {\it integrally decomposable} when
$\overline{M_\tL}$ is decomposable, for all $\tL\in \Lambda_M$.
%\begin{enumerate}
%\item  First assume $r=p$. We say that $M$ is {\it integrally decomposable} when
%$$
%\overline{M}=\overline{M_1}\oplus\cdots\oplus \overline{M_p}.
%$$

%\item Now assume $r<p$. We say that $M$ is {\it  integrally decomposable} when $M_\tL$
% is integrally decomposable  as a submodule (of rank $r$) of $R^r$, for any $\tL\in \Lambda_M$.
%\end{enumerate}
\end{defn}

We remark that, under the conditions of the above definition, if $\tL\in\Lambda_M$ and
we write $\tL=(i_1,\dots, i_r)$, where $1\leq i_1<\cdots <i_r\leq p$, then
$\overline{M_\tL}$ is decomposable if and only if $\overline{M_\tL}=(\overline{M_\tL})_{i_1}
\oplus \cdots\oplus (\overline{M_\tL})_{i_r}$. In particular, we observe that Definition \ref{WDdef} constitutes a void condition
when $\rank(M)=1$.

\begin{lem}\label{Mibarra}
Let $M$ be submodule of $R^p$. Then $\overline{(\overline M)_i}=\overline{M_i}$, for all $i=1,\dots, p$.
\end{lem}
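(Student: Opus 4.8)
The plan is to prove the two inclusions separately, using only the monotonicity and idempotency of integral closure together with Proposition \ref{projections}.

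First I would establish $\overline{M_i}\subseteq \overline{(\overline M)_i}$. Since $M\subseteq \overline M$, projecting onto the $i$-th component gives $M_i\subseteq (\overline M)_i$ as ideals of $R$. Integral closure of ideals is monotone with respect to inclusion, so $\overline{M_i}\subseteq \overline{(\overline M)_i}$.

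For the reverse inclusion $\overline{(\overline M)_i}\subseteq \overline{M_i}$, I would apply Proposition \ref{projections} with $\tL=\{i\}$ to the submodule $M\subseteq R^p$, which yields $(\overline M)_i=(\overline M)_{\{i\}}\subseteq \overline{M_{\{i\}}}=\overline{M_i}$. Applying integral closure again and using that the integral closure of an ideal is integrally closed (idempotency), we get $\overline{(\overline M)_i}\subseteq \overline{\,\overline{M_i}\,}=\overline{M_i}$. Combining the two inclusions gives the claimed equality $\overline{(\overline M)_i}=\overline{M_i}$ for all $i=1,\dots,p$.

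There is essentially no obstacle here: the statement follows formally once Proposition \ref{projections} is available, the only slightly substantive ingredient being the special case $\tL=\{i\}$ of that proposition, which identifies the $i$-th row ideal of $\overline M$ as a subideal of $\overline{M_i}$.
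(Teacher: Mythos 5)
Your argument is correct and coincides with the paper's own proof: both establish $\overline{M_i}\subseteq\overline{(\overline M)_i}$ from $M\subseteq\overline M$, and both obtain the reverse inclusion by applying Proposition \ref{projections} with $\tL=\{i\}$ and then taking integral closures using idempotency. No gaps.
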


\begin{proof}
Fix an index $i\in\{1,\dots, p\}$.
The inclusion $M\subseteq \overline M$ implies that $M_i\subseteq (\overline M)_i$. Thus $\overline{M_i}\subseteq \overline{(\overline M)_i}$.
From Proposition \ref{projections} we deduce that $(\overline M)_i\subseteq \overline{M_i}$. Therefore,
$\overline{(\overline M)_i}\subseteq \overline{M_i}$, and hence the result follows.
\end{proof}

\begin{prop}\label{lemaid}
Let $M$ be submodule of $R^p$ and let $r=\rank(M)$. Then
$M$ is integrally decomposable if and only if
\begin{equation}\label{mbarradec}
\overline {M_\tL}=\overline{M_{i_1}}\oplus \cdots \oplus \overline{M_{i_r}}.
\end{equation}
for all $\tL=(i_1,\dots,i_r)\in\Lambda_M$, where $1\leq i_1<\cdots<i_r\leq p$.
\end{prop}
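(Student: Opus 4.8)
The plan is to reduce both directions to Lemma \ref{Mibarra} and the elementary inclusion \eqref{icfonam} applied to the projected module $M_\tL\subseteq R^r$. First I would unwind the definition: by Definition \ref{WDdef}, $M$ is integrally decomposable exactly when, for every $\tL=(i_1,\dots,i_r)\in\Lambda_M$, the module $\overline{M_\tL}$ is decomposable, i.e. $\overline{M_\tL}=(\overline{M_\tL})_{i_1}\oplus\cdots\oplus(\overline{M_\tL})_{i_r}$ (this is the reformulation already noted in the paragraph following Definition \ref{WDdef}). So the task is to show that, for a fixed $\tL\in\Lambda_M$, the equality $\overline{M_\tL}=(\overline{M_\tL})_{i_1}\oplus\cdots\oplus(\overline{M_\tL})_{i_r}$ holds if and only if $\overline{M_\tL}=\overline{M_{i_1}}\oplus\cdots\oplus\overline{M_{i_r}}$.

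For this it suffices to observe that the two right-hand sides coincide term by term: $(\overline{M_\tL})_{i_j}=\overline{M_{i_j}}$ for each $j=1,\dots,r$. Indeed, apply Lemma \ref{Mibarra} to the submodule $M_\tL\subseteq R^r$ (with the index set relabeled by $\tL$): it gives $\overline{(\overline{M_\tL})_{i_j}}=\overline{(M_\tL)_{i_j}}=\overline{M_{i_j}}$, since the $i_j$-th row ideal of $M_\tL$ is exactly $M_{i_j}$. On the other hand, from Proposition \ref{projections} we have $(\overline{M_\tL})_{i_j}\subseteq\overline{(M_\tL)_{i_j}}=\overline{M_{i_j}}$, while $M_{i_j}=(M_\tL)_{i_j}\subseteq(\overline{M_\tL})_{i_j}$ gives $\overline{M_{i_j}}\subseteq\overline{(\overline{M_\tL})_{i_j}}$. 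Combining these with the fact that the integral closure of a row ideal is an ideal whose further closure is itself, one gets $(\overline{M_\tL})_{i_j}\subseteq\overline{M_{i_j}}$ and $\overline{M_{i_j}}=\overline{(\overline{M_\tL})_{i_j}}$; since $(\overline{M_\tL})_{i_j}$ is integrally closed (it is a row ideal of the integrally closed module $\overline{M_\tL}$... ) I would instead argue directly: $(\overline{M_\tL})_{i_j}$ is squeezed between $M_{i_j}$ and $\overline{M_{i_j}}$ and is itself integrally closed, forcing $(\overline{M_\tL})_{i_j}=\overline{M_{i_j}}$. Granting this identification of the summands, the equivalence in Proposition \ref{lemaid} is immediate, and moreover $\overline{M_{i_1}}\oplus\cdots\oplus\overline{M_{i_r}}=\overline{M_{i_1}\oplus\cdots\oplus M_{i_r}}$ by \eqref{icfonam}, so the stated form \eqref{mbarradec} is exactly the decomposability of $\overline{M_\tL}$.

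The only genuinely delicate point is the claim that $(\overline{M_\tL})_{i_j}$ is integrally closed as an ideal of $R$; I expect this to be the main obstacle, and the cleanest route is not to assert it but to use the double inequality $\overline{M_{i_j}}\subseteq\overline{(\overline{M_\tL})_{i_j}}\subseteq\overline{\overline{M_{i_j}}}=\overline{M_{i_j}}$ together with $(\overline{M_\tL})_{i_j}\subseteq\overline{M_{i_j}}$ — that is, to conclude $(\overline{M_\tL})_{i_j}=\overline{M_{i_j}}$ by showing each contains the other: the inclusion $\subseteq$ is Proposition \ref{projections}, and for $\supseteq$ note that $(\overline{M_\tL})_{i_j}$ contains $M_{i_j}$ and, being the $i_j$-row ideal of the integrally closed module $\overline{M_\tL}$, it contains $\overline{M_{i_j}}$ because any $h$ integral over $M_{i_j}$ lifts (via Lemma \ref{Mibarra} / the valuative criterion applied row-by-row) to an element of $\overline{M_\tL}$ with $i_j$-th coordinate $h$. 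This last lifting step is where one uses that $\overline{M_\tL}$ being decomposable is precisely the hypothesis being characterized, so in the "only if" direction it is available, and in the "if" direction one argues in the reverse order. I would write the two implications separately to keep this bookkeeping transparent.
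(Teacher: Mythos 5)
Your reduction to a fixed $\tL\in\Lambda_M$ and the reformulation of Definition \ref{WDdef} are fine, but the step you base everything on --- the unconditional identification $(\overline{M_\tL})_{i_j}=\overline{M_{i_j}}$, justified either by asserting that a row ideal of an integrally closed module is integrally closed, or by lifting every $h\in\overline{M_{i_j}}$ to an element of $\overline{M_\tL}$ --- is false in general. Take $M\subseteq\O_2^2$ generated by the columns of $\left[\begin{smallmatrix} x^2 & y^2\\ x & y\end{smallmatrix}\right]$, so $r=p=2$, $\Lambda_M=\{(1,2)\}$ and $M_\tL=M$. Here $\I_2(M)=(xy(x-y))$ is principal in the normal domain $\O_2$, hence integrally closed. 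By Corollary \ref{IrMh}, an element $h=[\begin{matrix} xy & h_2\end{matrix}]^T$ lies in $\overline M$ only if $x^2(h_2-y)$ and $y^2(h_2-x)$ belong to $(xy(x-y))$, i.e.\ $h_2=y+y(x-y)a=x+x(x-y)b$ for some $a,b\in\O_2$, which forces $xb-ya=-1$; this is impossible because the left-hand side lies in $\m_2$. Hence $xy\notin(\overline M)_1$ although $xy\in\overline{M_1}=\overline{(x^2,y^2)}$; in fact $(\overline M)_1=(x^2,y^2)$, which is not integrally closed. Lemma \ref{Mibarra} only gives $\overline{(\overline M)_i}=\overline{M_i}$, and Proposition \ref{projections} only one inclusion; the equality of ideals your plan requires simply does not hold outside the situation you are trying to characterize.

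This also makes the plan circular: for $\tL\in\Lambda_M$ the identification $(\overline{M_\tL})_{i_j}=\overline{M_{i_j}}$ is essentially equivalent to the decomposability of $\overline{M_\tL}$, i.e.\ to the statement being proved, and your closing sentence concedes that the lifting is only available under the hypothesis of the ``only if'' direction, while ``one argues in the reverse order'' is not an argument --- in the ``if'' direction decomposability is the conclusion, so the lifting is unavailable there. The gap disappears once the identification is dropped altogether, which is how the paper argues. For the ``if'' direction use the chain $\overline{M_\tL}\subseteq(\overline{M_\tL})_{i_1}\oplus\cdots\oplus(\overline{M_\tL})_{i_r}\subseteq\overline{M_{i_1}}\oplus\cdots\oplus\overline{M_{i_r}}$ (the last inclusion by Proposition \ref{projections}, or from $(\overline{M_\tL})_{i_j}\subseteq\overline{(\overline{M_\tL})_{i_j}}=\overline{M_{i_j}}$ via Lemma \ref{Mibarra}); if \eqref{mbarradec} holds the two ends coincide, so $\overline{M_\tL}$ equals the middle term and is decomposable. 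For the ``only if'' direction take integral closures in $\overline{M_\tL}=(\overline{M_\tL})_{i_1}\oplus\cdots\oplus(\overline{M_\tL})_{i_r}$ and apply \eqref{icfonam} together with Lemma \ref{Mibarra} to get $\overline{M_\tL}=\overline{(\overline{M_\tL})_{i_1}}\oplus\cdots\oplus\overline{(\overline{M_\tL})_{i_r}}=\overline{M_{i_1}}\oplus\cdots\oplus\overline{M_{i_r}}$, which is \eqref{mbarradec}. No lifting of individual elements is needed in either direction.
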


\begin{proof}
Since $M_\tL$ is a submodule of $R^r$ of rank $r$, for all $\tL\in\Lambda_M$, it suffices to show the result in the case $r=p$.
So let us assume that $\rank(M)=p$.
In general we have the following inclusions:
$$
 \overline M\subseteq (\overline M)_1\oplus \cdots \oplus (\overline M)_p\subseteq
\overline{(\overline M)_1}\oplus \cdots \oplus \overline{(\overline M)_p}=\overline{M_1}\oplus \cdots \oplus \overline{M_p}
$$
where the last equality is an application of Lemma \ref{Mibarra}.
This shows that if relation (\ref{mbarradec}) holds, then $\overline M$ is decomposable.

Conversely, if $\overline M$ is decomposable, then $\overline M=(\overline M)_1\oplus \cdots \oplus (\overline M)_p$. Taking integral closures in this equality
it follows that
$$
\overline M=\overline{\overline M}=\overline{(\overline M)_1\oplus \cdots \oplus (\overline M)_p}=
\overline{(\overline M)_1}\oplus \cdots \oplus \overline{(\overline M)_p}=\overline{M_1}\oplus \cdots \oplus \overline{M_p}
$$
again by Lemma \ref{Mibarra}, and thus equality (\ref{mbarradec}) follows.
\end{proof}

In the following proposition we characterize integrally decomposable modules in terms of their ideals of minors.%, for its proof we need the following lemma.

%\begin{lem}\label{modideals}
%Let $R$ be a Noetherian integral domain. Let $N$ and $M$ be submodules of $R^p$, %then the following conditions are equivalent.
%\begin{enumerate}
%\item[$(1)$] $N\subseteq \overline M$
%\item[$(2)$] $\overline{\I_r(M+N)}=\overline{\I_r(M)}$, for all $r$.
%\item[$(3)$] $\overline{\I_r(M+N)}=\overline{\I_r(M)}$, for  $r=\rank(M+N)$.
%\end{enumerate}
%and assume $\rank(M)=p$. Then $N\subseteq \overline M$ if and only if $\overline{\I_p(M+N)}=\overline{\I_p(M)}$.
%\end{lem}

%\begin{proof}
%Let $h_1,\dots, h_s$ be a generating system of $N$. Assume $N\subseteq \overline M$, % We begin with (1)$\Rightarrow$(2). Let $r\in\Z_{\geq 1}$,
 %then the condition $h_1\in \overline M$
%implies, by Theorem \ref{icm}, that
%$
%\overline{\I_p(M + R h_1)}=\overline{\I_p(M)}.
%$
%Analogously, the condition $h_2\in \overline M\subseteq \overline{M+ R h_1 }$ gives
%$$
%\overline{\I_p(M+R h_1 +R h_2)}=\overline{\I_p(M+ R h_1)}=\overline{\I_p(M)}.
%$$
%Proceeding inductively we obtain  $\overline{\I_p(M+N)}=\overline{\I_p(M)}$.

%Since (2)$\Rightarrow$ (3) is clear, we proceed with (3)$\Rightarrow$ (1).  Let $r=\rank(M+N)$. Then assumption implies $r=\rank(M+Rh_i)$

%Conversely, for any $i=1,\ldots, s$ we have  $\I_p(M +  R h_i)\subseteq \overline{\I_p(M+N)}=\overline{\I_p(M)}$. %, for every $i=1,\dots, s$.
% Therefore
%$h_i\in \overline M$, for every $i=1,\dots, s$, by Theorem \ref{icm}. It follows that  $N\subseteq \overline M$, completing the proof.\end{proof}

\begin{prop}\label{WDprop}
Let  $M$ be  a submodule of $R^p$ and let $r=\rank(M)$.  Then the following conditions are equivalent.
\begin{enumerate}
\item[$(1)$] $M$ is integrally decomposable.
%\item $\I_r(M_\tL)$ is Newton non-degenerate and $\Gamma_+\left(\I_r(M)\right)=\Gamma_+\left(\prod_{i\in \tL}M_i^0)\right)$, for all $\tL\in \Lambda_M$.
\item[$(2)$] $\overline{\I_r(M_\tL)}=\overline{\prod_{i\in \tL}M_i}$, for all $\tL\in \Lambda_M$.
\end{enumerate}
\end{prop}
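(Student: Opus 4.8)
The plan is to reduce immediately to the case $r=p$, exactly as in Proposition \ref{lemaid}: since each $M_\tL$ is a submodule of $R^r$ having rank $r$, and since $\I_r(M_\tL)=\I_r((M_\tL)_{\mathrm{all}})$ is the ideal of maximal minors of $M_\tL$, condition (2) for $M$ is literally the conjunction over $\tL\in\Lambda_M$ of condition (2) for the rank-$r$ module $M_\tL\subseteq R^r$, and similarly condition (1) for $M$ unwinds to condition (1) for each $M_\tL$ by Definition \ref{WDdef}. So assume $\rank(M)=p$, write $I_i=M_i$ for the row ideals, and we must show
$$
\overline M=\overline{M_1}\oplus\cdots\oplus\overline{M_p}\quad\Longleftrightarrow\quad \overline{\I_p(M)}=\overline{I_1\cdots I_p}.
$$

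For the direction $(1)\Rightarrow(2)$: if $\overline M=\overline{I_1}\oplus\cdots\oplus\overline{I_p}$, then first note $\overline{\I_p(M)}=\overline{\I_p(\overline M)}$. Indeed $\I_p(M)\subseteq\I_p(\overline M)$ trivially, and the reverse inclusion up to integral closure follows because every generator of $\overline M$ lies in $\overline M$, so by Corollary \ref{IrMh} (applied one column at a time, using that $\rank$ does not increase) $\I_p(\overline M)\subseteq\overline{\I_p(M)}$; hence $\overline{\I_p(M)}=\overline{\I_p(\overline M)}$. Now $\overline M$ is the decomposable module $\overline{I_1}\oplus\cdots\oplus\overline{I_p}$, and for a decomposable module $J_1\oplus\cdots\oplus J_p\subseteq R^p$ of rank $p$ the ideal of maximal minors is precisely the product $J_1\cdots J_p$ (the determinant of a diagonal-block matrix of generators is a product of one generator from each row, and conversely each such product occurs). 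Therefore $\overline{\I_p(M)}=\overline{\I_p(\overline{I_1}\oplus\cdots\oplus\overline{I_p})}=\overline{\overline{I_1}\cdots\overline{I_p}}=\overline{I_1\cdots I_p}$, which is (2).

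For $(2)\Rightarrow(1)$: the inclusion $\overline M\subseteq\overline{I_1}\oplus\cdots\oplus\overline{I_p}$ is always available (equation \eqref{icfonam} in Remark \ref{basics}, together with Proposition \ref{projections}), so it suffices to show $\overline{I_1}\oplus\cdots\oplus\overline{I_p}\subseteq\overline M$. Fix $h=(h_1,\dots,h_p)$ with $h_i\in\overline{I_i}$ for each $i$. Since $h_i\in\overline{I_i}$ and $\rank(M)=p$ is already maximal, $M+Rh$ still has rank $p$, so by Corollary \ref{IrMh} we must check $\I_p(M+Rh)\subseteq\overline{\I_p(M)}$. Here I expect the main obstacle, and the natural tool is Lemma \ref{Cramer}: working modulo each minimal prime $\fp$ (so over an integral domain, where all ranks behave well — and using $\I_p(M)(R/\fp)=\I_p(M(R/\fp))$ as in the proof of Theorem \ref{icm}), Lemma \ref{Cramer} with $r=p=\rank(M+Rh)$ gives $\I_p(M)\,h\subseteq\I_p(M+Rh)\,M$. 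But one also has the elementary fact, for the single column $h$, that $\I_p(M+Rh)$ is spanned by $\I_p(M)$ together with the new $p\times p$ minors using the column $h$; expanding such a minor along the $h$-column and using that each entry of $h$ lies in the corresponding $\overline{I_i}=\overline{M_i}$ while the cofactors are $(p-1)\times(p-1)$ minors of $M$, one gets that every new minor lies in $\overline{M_1\cdots M_p}$ after passing to integral closure — more precisely, each new minor is an $R$-combination $\sum_i \pm h_i\,\delta_i$ with $\delta_i\in\I_{p-1}(M_{\widehat{i}})$, hence (since $h_i\in\overline{M_i}$ and $\delta_i\,M_i\subseteq$ the $p$-minor ideal) it lies in $\overline{M_1\cdots M_p}=\overline{\I_p(M)}$ by hypothesis (2). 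Therefore $\I_p(M+Rh)\subseteq\overline{\I_p(M)}$, and Corollary \ref{IrMh} yields $h\in\overline M$, as desired. The technical care needed is the bookkeeping of cofactor expansions together with the standard reductions modulo minimal primes and the identity between Fitting ideals and ideals of minors; everything else is formal.
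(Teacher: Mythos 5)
Your overall architecture---reduce to the maximal-rank case via the modules $M_\tL$, then compare with the decomposable module $M_{i_1}\oplus\cdots\oplus M_{i_r}$ through ideals of maximal minors---is sound, and it is essentially the paper's route (the paper obtains both directions at once by applying Theorem \ref{icm} to the containment $M_\tL\subseteq M_{i_1}\oplus\cdots\oplus M_{i_r}$ and then quoting Proposition \ref{lemaid}); your direction $(1)\Rightarrow(2)$ is correct. The genuine gap is at the decisive step of $(2)\Rightarrow(1)$: the parenthetical claim that $\delta_i\,M_i\subseteq\I_p(M)$, where $\delta_i$ is a $(p-1)$-minor of $M$ omitting row $i$, is false in general. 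For instance, for the rank-two submodule $M\subseteq\O_2^2$ generated by the columns of $\left[\begin{smallmatrix} x & y\\ y & x\end{smallmatrix}\right]$ one has $\I_2(M)=(x^2-y^2)$, $M_1=(x,y)$ and $\delta_1=y$, yet $xy\notin (x^2-y^2)=\overline{\I_2(M)}$. Worse, if that inclusion were true your chain would give $\I_p(M+Rh)\subseteq\overline{\I_p(M)}$ for \emph{every} $h\in\overline{M_1}\oplus\cdots\oplus\overline{M_p}$ without using hypothesis (2) at all, so every rank-$p$ submodule of $R^p$ would be integrally decomposable; this contradicts the example in Remark \ref{basics}.

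The step is easily repaired, and the repair is exactly the computation the paper performs in the proof of Theorem \ref{WDcentral}, $(2)\Rightarrow(3)$: by the Leibniz expansion, every $(p-1)$-minor $\delta_i$ taken from the rows $\{1,\dots,p\}\setminus\{i\}$ lies in $\prod_{k\neq i}M_k$, so for $h_i\in\overline{M_i}$ each new maximal minor $\sum_i\pm h_i\delta_i$ of $[M\mid h]$ lies in $\sum_i\overline{M_i}\prod_{k\neq i}M_k\subseteq\overline{M_1\cdots M_p}$ (by \cite[1.3.2]{HS}), and only at this point does hypothesis (2) convert $\overline{M_1\cdots M_p}$ into $\overline{\I_p(M)}$, whence $h\in\overline M$ by Corollary \ref{IrMh}. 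With that one-line correction your argument goes through and amounts to the paper's proof carried out with explicit cofactor bookkeeping; as written, however, the justification at the key point is wrong and proves too much.
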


\begin{proof}
Fix $\tL=(i_1,\ldots, i_r)\in \Lambda_M$ and let $N=M_{i_1}\oplus \cdots \oplus M_{i_r}$. Then
$\overline{M_\tL}\subseteq \overline{N} = \overline{M_{i_1}}\oplus \cdots \oplus \overline{M_{i_r}},$
where the last equality holds by  Remark \ref{basics}.
Therefore, by Theorem \ref{icm}, $\overline{M_\tL}=\overline{N}$ if and only if
$\overline{\I_r(M_\tL)}=\overline{\I_r(N)}=\overline{M_{i_1}\cdots M_{i_r}}$. Then the result follows as a direct application of Proposition \ref{lemaid}.
\end{proof}

Let $R$ be a Noetherian local ring of dimension $d$ and let
$I_1,\dots, I_d$ be a family of ideals of $R$ of finite colength.
We denote by $e(I_1,\dots, I_d)$ the mixed multiplicity of the family of ideals $I_1,\dots, I_d$ (see \cite[p.\,339]{HS}).
We recall that when the ideals $I_1,\dots, I_d$ coincide with a given ideal $I$ of finite colength, then
$e(I_1,\dots, I_d)=e(I)$, where $e(I)$ is the multiplicity of $I$, in the usual sense.

Let $(i_1,\dots, i_p)\in\Z_{\geq 0}^p$, for some $p\leq d$, such that $i_1+\cdots+i_p=d$.
We denote by
$e_{i_1,\dots, i_p}(I_1,\dots, I_p)$ the mixed multiplicity $e(I_1,\dots, I_1,\dots, I_p,\dots, I_p)$ where
$I_j$ is repeated $i_j$ times, for all $j=1,\dots, p$.

Let $M$ be a submodule of $R^p$ of finite colength. Following \cite[p.\,418]{BiviaJLMS}, we define
$$
\delta(M)=\sum_{\substack{i_1+\cdots +i_p=d\\ i_1,\dots,
i_p\geq 0 }}e_{i_1,\dots, i_p}(M_1,\dots, M_p).
$$
We remark that the condition that $M$ has finite colength in $R^p$ implies that $M_i$ has finite colength in $R$, for all $i=1,\dots, p$.

By a result of Kirby and Rees in \cite[p.\,444]{Kirby-Rees} (see also \cite[p.\,417]{BiviaJLMS}),
we have that $e(I_1\oplus \cdots \oplus I_p)=\delta(I_1\oplus \cdots \oplus I_p)$, for any family of ideals
$I_1,\dots,I_p$ of $R$ of finite colength. Therefore $\delta(M)=e(M_1\oplus \cdots \oplus M_p)$.

\begin{prop}\label{idcodfinita}
Let $R$ be a formally equidimensional Noetherian local ring of dimension $d>0$.
Let $M$ be a submodule of $R^p$. Let $r=\rank(M)$. Assume $M_\tL$ has finite colength, as a submodule of $R^{r}$,
for all $\tL\in \Lambda_M$. Then $M$ is integrally decomposable if and only if $e(M_\tL)=\delta(M_\tL)$, for all $\tL\in \Lambda_M$.
\end{prop}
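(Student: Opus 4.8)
The plan is to reduce everything to the characterizations already in hand: Proposition \ref{WDprop} says that $M$ is integrally decomposable if and only if $\overline{\I_r(M_\tL)}=\overline{\prod_{i\in\tL}M_i}$ for all $\tL\in\Lambda_M$, and Theorem \ref{numCr} (Rees--Katz) says that under the formally equidimensional hypothesis, for submodules $N\subseteq M'\subseteq R^q$ with $\lambda(R^q/N)<\infty$ one has $\overline{N}=\overline{M'}$ if and only if $e(N)=e(M')$. So first I would fix $\tL\in\Lambda_M$ and compare the two submodules of $R^r$ living over it: namely $M_\tL$ and $M_{i_1}\oplus\cdots\oplus M_{i_r}$, the latter being (up to reindexing $\tL=(i_1,\dots,i_r)$) a decomposable submodule of $R^r$.

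The key observation is that both of these submodules sit between a suitable finite-colength submodule and its integral closure, so Theorem \ref{numCr} applies with $q=r$. Indeed, by Remark \ref{basics} we have $\overline{M_\tL}\subseteq \overline{M_{i_1}\oplus\cdots\oplus M_{i_r}}=\overline{M_{i_1}}\oplus\cdots\oplus\overline{M_{i_r}}$, and since $M_\tL$ has finite colength in $R^r$ by hypothesis, so does $M_{i_1}\oplus\cdots\oplus M_{i_r}$ (it contains $M_\tL$). Applying Theorem \ref{numCr} with $N=M_\tL$ and $M'=M_{i_1}\oplus\cdots\oplus M_{i_r}$, we get that $\overline{M_\tL}=\overline{M_{i_1}}\oplus\cdots\oplus\overline{M_{i_r}}$ if and only if $e(M_\tL)=e(M_{i_1}\oplus\cdots\oplus M_{i_r})$. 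Now the right-hand multiplicity is exactly $\delta(M_\tL)$: by the Kirby--Rees formula quoted just before the statement, $e(M_{i_1}\oplus\cdots\oplus M_{i_r})=\delta(M_{i_1}\oplus\cdots\oplus M_{i_r})$, and the latter is by definition $\sum e_{j_1,\dots,j_r}((M_\tL)_{i_1},\dots,(M_\tL)_{i_r})$, i.e. $\delta(M_\tL)$ since $(M_\tL)_{i_k}=M_{i_k}$. Finally $\overline{M_\tL}$ being decomposable is, by Proposition \ref{lemaid}, precisely the condition $\overline{M_\tL}=\overline{M_{i_1}}\oplus\cdots\oplus\overline{M_{i_r}}$. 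Stringing these equivalences together over all $\tL\in\Lambda_M$ yields the claim.

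The main point requiring a little care is making sure the hypotheses of Theorem \ref{numCr} are genuinely met for the pair $(M_\tL,\; M_{i_1}\oplus\cdots\oplus M_{i_r})$ inside $R^r$ — one must check the finite-colength condition transfers correctly and that formal equidimensionality of $R$ is all that is needed (it is, since $R$ itself is the ring throughout). One should also confirm that the row ideals of $M_\tL$ as a submodule of $R^r$ coincide with $M_{i_1},\dots,M_{i_r}$, which is immediate from the definition of the projection $M_\tL$ and the fact that $(M_\tL)_{\{i_k\}}=M_{\{i_k\}}=M_{i_k}$. Apart from this bookkeeping, the proof is essentially a concatenation of the cited results, so no genuinely hard step is anticipated; the potential obstacle is purely notational — keeping the reindexing of $\tL$ and the identification $\delta(M_\tL)=e(M_1\oplus\cdots\text{ over }\tL)$ straight.

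In summary, the proof I would write is: fix $\tL\in\Lambda_M$; apply Theorem \ref{numCr} to $M_\tL\subseteq M_{i_1}\oplus\cdots\oplus M_{i_r}\subseteq R^r$ to see $\overline{M_\tL}=\bigoplus\overline{M_{i_k}}$ iff $e(M_\tL)=e(\bigoplus M_{i_k})$; identify $e(\bigoplus M_{i_k})=\delta(M_\tL)$ via Kirby--Rees; invoke Proposition \ref{lemaid} to rephrase decomposability of $\overline{M_\tL}$ as $\overline{M_\tL}=\bigoplus\overline{M_{i_k}}$; and conclude by quantifying over $\tL\in\Lambda_M$ and using the definition of integrally decomposable.
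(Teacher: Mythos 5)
Your proposal is correct and follows essentially the same route as the paper: Proposition \ref{lemaid} to rephrase integral decomposability as $\overline{M_\tL}=\overline{M_{i_1}}\oplus\cdots\oplus\overline{M_{i_r}}$, the Rees--Katz criterion (Theorem \ref{numCr}) applied inside $R^r$ to convert this into $e(M_\tL)=e(M_{i_1}\oplus\cdots\oplus M_{i_r})$, and the Kirby--Rees formula to identify the latter multiplicity with $\delta(M_\tL)$. The only cosmetic difference is that you apply Theorem \ref{numCr} directly to the pair $M_\tL\subseteq M_{i_1}\oplus\cdots\oplus M_{i_r}$, while the paper phrases the same step as $M_\tL$ being a reduction of $\overline{M_{i_1}}\oplus\cdots\oplus\overline{M_{i_r}}$.
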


\begin{proof}
Let us fix any $\tL=(i_1,\dots, i_r)\in\Lambda_M$.
By Proposition \ref{lemaid}, the submodule $\overline{M_\tL}\subseteq R^r$ is decomposable if and only if
$\overline{M_\tL}=\overline{M_{i_1}}\oplus \cdots \oplus \overline{M_{i_r}}$.
Let us recall that
$$
\overline{M_{i_1}}\oplus \cdots \oplus \overline{M_{i_r}}= \overline{\overline{M_{i_1}}\oplus \cdots \oplus \overline{M_{i_r}}}.
$$
Thus $M_\tL$ is integrally decomposable if and only if $M_\tL$ is a reduction of $\overline{M_{i_1}}\oplus \cdots \oplus \overline{M_{i_r}}$, which is to say
that $e(M_\tL)=e(M_{i_1}\oplus \cdots \oplus M_{i_r})$, by Theorem \ref{numCr}. But $e(M_{i_1}\oplus \cdots \oplus M_{i_r})=\delta(M_\tL)$, thus the result follows.
\end{proof}

For a submodule of $R^p$, we introduce the following objects.

\begin{defn}\label{JMCM}
Let $M\subseteq R^p$ be a submodule of rank $r$. We define the ideal
$$
J_M=\sum_{(i_1,\dots, i_r)\in\Lambda_M}M_{i_1}\cdots M_{i_r}
$$
and the following modules
\begin{align*}
Z(M)&=\left\{ h\in R^p: \rank(M)=\rank(M + R h) \right\}\\
C(M)&=Z(M)\cap \left(\overline{M_1}\oplus \cdots\oplus \overline{M_p}\right).
\end{align*}
%$$
%C(M)=\left\{ h\in \overline{M_1}\oplus \cdots\oplus \overline{M_p}: \rank(M)=\rank(M + R h) \right\}.
%$$
\end{defn}

\begin{rem}
In the previous definition, if $r=p$ then $Z(M)=R^p$ and thus $C(M)=\overline{M_1}\oplus \cdots\oplus \overline{M_p}$.
\end{rem}

From Remarks \ref{sameRank} and  \ref{basics} it follows that $\overline M$ is always contained in $C(M)$ but this containment can be strict.
%In Algorithm \ref{theAlg} we give an effective algorithm for the computation of $C(M)$.
We ask the following question.

\begin{question}\label{QuesCofM}
Let $M$ be  a submodule of $R^p$, when do we have $\overline M=C(M)?$
\end{question}

The following is the main theorem of this section, here we provide a  partial answer to Question \ref{QuesCofM} by showing that   integrally decomposable modules satisfy this equality.

\begin{thm}\label{WDcentral} Let $M$ be a submodule of $R^p$ and let $r=\rank(M)$.
Consider the following conditions.
\begin{enumerate}
\item[$(1)$] $M$ is integrally decomposable.
%\item[(b)] $\I_r(M)$ is Newton non-degenerate and $\Gamma_+(\I_r(M))=\Gamma_+(J_M)$.
\item[$(2)$] $\overline{\I_r(M)}=\overline{J_M}$.
\item[$(3)$] $\overline M=C(M)$.
\end{enumerate}
Then $\textnormal{(1)}\Rightarrow\textnormal{(2)}\Rightarrow\textnormal{(3)}$. Moreover, if $r=p$, then these implications become equivalences.
\end{thm}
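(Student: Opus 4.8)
The plan is to prove the chain $(1)\Rightarrow(2)\Rightarrow(3)$ and then the reverse implications under the hypothesis $r=p$, using Theorem~\ref{icm}, Corollary~\ref{IrMh}, and Proposition~\ref{WDprop} as the main tools.

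\textbf{Step $(1)\Rightarrow(2)$.} By Proposition~\ref{WDprop}, integral decomposability means $\overline{\I_r(M_\tL)}=\overline{\prod_{i\in\tL}M_i}$ for all $\tL\in\Lambda_M$. Since $\I_r(M)=\sum_{\tL\in\Lambda_M}\I_r(M_\tL)$ (expanding an $r\times r$ minor picks out $r$ rows), I would use the general fact that $\overline{I+J}=\overline{\overline I+\overline J}$ for ideals to get
$$\overline{\I_r(M)}=\overline{\sum_{\tL\in\Lambda_M}\I_r(M_\tL)}=\overline{\sum_{\tL\in\Lambda_M}\overline{\I_r(M_\tL)}}=\overline{\sum_{\tL\in\Lambda_M}\overline{\textstyle\prod_{i\in\tL}M_i}}=\overline{\sum_{\tL\in\Lambda_M}\textstyle\prod_{i\in\tL}M_i}=\overline{J_M}.$$

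\textbf{Step $(2)\Rightarrow(3)$.} Since $\overline M\subseteq C(M)$ always holds, it suffices to show $C(M)\subseteq\overline M$. Fix $h\in C(M)$; then $\rank(M+Rh)=r$ and $h\in\overline{M_1}\oplus\cdots\oplus\overline{M_p}$. By Corollary~\ref{IrMh}, it is enough to prove $\I_r(M+Rh)\subseteq\overline{\I_r(M)}$. Now $\I_r(M+Rh)$ is generated by $\I_r(M)$ together with minors using the column $h$; each such minor lies in some product $h_{i_1}\cdots$ times minors, and expanding along the $h$-column one sees every new generator lies in a product $\prod_{j\in\tL}(\overline{M_j}+\text{entries of }M)$ over an $r$-subset $\tL$ for which the corresponding rows of $M+Rh$ carry a nonzero minor — hence $\tL\in\Lambda_{M+Rh}$, and since $\rank(M+Rh)=\rank(M)$ one checks $\Lambda_{M+Rh}=\Lambda_M$. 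Using $h_j\in\overline{M_j}$ and basic properties of integral closure of products, each generator lies in $\overline{\prod_{j\in\tL}M_j}\subseteq\overline{J_M}=\overline{\I_r(M)}$. Thus $\I_r(M+Rh)\subseteq\overline{\I_r(M)}$ and $h\in\overline M$.

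\textbf{The case $r=p$.} Here $Z(M)=R^p$, so $C(M)=\overline{M_1}\oplus\cdots\oplus\overline{M_p}$ and $\Lambda_M$ consists of the single tuple $(1,\dots,p)$, so $J_M=M_1\cdots M_p$. For $(3)\Rightarrow(1)$: if $\overline M=C(M)=\overline{M_1}\oplus\cdots\oplus\overline{M_p}$, then $\overline M$ is decomposable, which by Proposition~\ref{lemaid} (or directly Proposition~\ref{WDprop}) is exactly integral decomposability. For $(2)\Rightarrow(1)$ when $r=p$: condition $(2)$ reads $\overline{\I_p(M)}=\overline{M_1\cdots M_p}$, which is precisely condition $(2)$ of Proposition~\ref{WDprop} for the unique $\tL$, hence $M$ is integrally decomposable. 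This closes the loop.

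\textbf{Main obstacle.} The delicate point is Step $(2)\Rightarrow(3)$: carefully verifying that adjoining the column $h$ with $h_j\in\overline{M_j}$ keeps all maximal minors inside $\overline{J_M}$, and in particular that $\Lambda_{M+Rh}=\Lambda_M$ (so no new $r$-subsets appear) when the rank is preserved. One should argue this after passing modulo each minimal prime and reducing to a domain, where Lemma~\ref{Cramer} and the Cramer's-rule bookkeeping control exactly which minors can be nonzero; the containment $h_j\in\overline{M_j}$ then propagates through products via $\overline{IJ}=\overline{\overline I\,\overline J}$. If pushing this through the generators of $\I_r(M+Rh)$ directly proves cumbersome, the alternative is to invoke Corollary~\ref{IrMh} together with Theorem~\ref{icm} applied to the submodule $M\subseteq M+Rh$ of $R^p$ directly, reducing everything to the ideal-theoretic identity $\overline{\I_r(M+Rh)}=\overline{\I_r(M)}$, which follows from $(2)$ and the analogous identity for $J_{M+Rh}$.
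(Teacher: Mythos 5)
Your overall route is the paper's: (1)$\Rightarrow$(2) via Proposition \ref{WDprop} and $\I_r(M)=\sum_{\tL\in\Lambda_M}\I_r(M_\tL)$; (2)$\Rightarrow$(3) by reducing, through Theorem \ref{icm}/Corollary \ref{IrMh}, to $\I_r(M+Rh)\subseteq\overline{\I_r(M)}$ and expanding each minor of $[M\mid h]$ row by row; and the equivalences for $r=p$ via Proposition \ref{lemaid}. The genuine gap is precisely the point you flag as the ``main obstacle'' and then only assert: that every $r$-subset of rows $\tL$ supporting a nonzero $r\times r$ minor of $[M\mid h]$ already lies in $\Lambda_M$ (i.e.\ $\Lambda_{M+Rh}\subseteq\Lambda_M$; the reverse inclusion is trivial and not needed). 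Without this, a minor supported on rows outside $\Lambda_M$ is only known to lie in $\overline{\prod_{i\in\tL}M_i}$ for a product that is not a summand of $J_M$, and the chain $\subseteq\overline{J_M}=\overline{\I_r(M)}$ collapses.

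The paper fills this with Lemma \ref{ranks}: $\rank(M)=\rank(M+Rh)$ means $h$ is a $Q(R)$-linear combination of the columns of $M$, and projecting onto the rows in $\tL$ gives $\rank(M_\tL)=\rank(M_\tL+Rh_\tL)$; since a nonzero minor stays nonzero in $Q(R)$, this common rank is $r$, so $\I_r(M_\tL)\otimes_R Q(R)\neq 0$ and $\tL\in\Lambda_M$. Your proposed substitute --- pass modulo each minimal prime and let Lemma \ref{Cramer} ``control which minors can be nonzero'' --- is not the right mechanism: Lemma \ref{Cramer} produces containments of ideals (it is the engine behind Theorem \ref{icm}), not the non-vanishing of a minor of $M_\tL$ itself, and reduction modulo minimal primes can kill a nonzero nilpotent minor when $R$ is not reduced. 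Relatedly, your alternative ending invokes $\overline{\I_r(M+Rh)}=\overline{J_{M+Rh}}$, which is not available (hypothesis (2) concerns $M$ only); what is true and suffices is the elementary containment $\I_r(M+Rh)\subseteq J_{M+Rh}$ together with $\overline{J_{M+Rh}}=\overline{J_M}$, and that last equality rests on exactly the same row-set claim. Once Lemma \ref{ranks} (or the equivalent $Q(R)$-linear-algebra argument) is inserted, your proof coincides with the paper's.
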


We remark that $\textnormal{(3)}\nRightarrow\textnormal{(2)}$. In particular
$\textnormal{(3)}\nRightarrow\textnormal{(1)}$ in general. This is shown in Example \ref{CMnotID}.
In a wide variety of examples of modules $M\subseteq R^3$ with $\rank(M)=2$ we have verified that $M$ is integrally decomposable when
$\overline{\I_r(M)}=\overline{J_M}$. However we have not yet found a proof or a counterexample of the implication
$\textnormal{(2)}\Rightarrow\textnormal{(1)}$; we conjecture that this implication holds in general.

We present the proof of Theorem \ref{WDcentral} after the following remark and lemma.

\begin{rem}
We observe that $\overline{\I_r(M)}\subseteq \overline{J_M}$. %$\Gamma_+(\I_r(M))\subseteq \Gamma_+(J_M)$.
In general, this inclusion might be strict. For instance, consider the submodule $M\subseteq \O_2^3$ generated by the columns of the following matrix
$$
\left[\begin{matrix}
x^2 & xy & x^3 \\
y^2 & y^2 & y^2  \\
x+y & 2y & x^2+y
\end{matrix}\right].
$$
Notice that $M_1=( x^2, xy)$, $M_2=( y^2)$
and $M_3=( x,y)$. We see that $\rank(M)=2$ and
$$
\overline{J_M}=\overline{M_1M_2+M_1M_3+M_2M_3}=\overline{( x^3, y^3)}.
$$
However, $\I_2(M)=(x^2y, xy^2, y^3)$. Therefore $\overline{\I_r(M)}$ is strictly contained in $\overline{J_M}$.
\end{rem}

We need one more lemma prior presenting the proof of the theorem.

\begin{lem}\label{ranks}
Let $M\subseteq R^p$ be a submodule and let $h\in R^p$. If $\rank(M)=\rank(M + R h)$, then
$\rank(M_\tL)=\rank(M_\tL+ R h_\tL)$, for any $\tL\subseteq\{1,\dots, p\}$, $\tL\neq \emptyset$.
\end{lem}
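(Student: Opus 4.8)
The plan is to prove Lemma \ref{ranks} by reducing to the case of an integral domain and then using the behavior of ranks under localization, together with a rank-counting argument via ideals of minors. First I would recall that the rank of a module over $R$ is detected after passing to $R/\fp$ for each minimal prime $\fp$ of $R$: indeed, $\rank(M)=e$ means $\I_e(M)\otimes_R Q(R)\neq 0$ and $\I_{e+1}(M)\otimes_R Q(R)=0$, and since $Q(R)$ is a product of the fields $R_\fp/\fp R_\fp$ over the minimal primes $\fp$ (with appropriate multiplicities, or at least $\Min(R)=\Ass(R)$ in the reduced case, but in general $\Min(R)\subseteq\Ass(R)$), one has $\rank(M)=\max_{\fp\in\Min(R)}\rank(M(R/\fp))$ when $M$ has a rank. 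The cleanest route is therefore: the hypothesis $\rank(M)=\rank(M+Rh)=:r$ means for every minimal prime $\fp$, $\rank(M(R/\fp))\leq r$ and $\rank((M+Rh)(R/\fp))\leq r$, with equality for at least one $\fp$ in each case; moreover since $M\subseteq M+Rh$ we always have $\rank(M(R/\fp))\leq \rank((M+Rh)(R/\fp))\leq \rank(M(R/\fp))+1$.

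The heart of the argument is the domain case, so next I would fix a minimal prime $\fp$ and work in $\overline R:=R/\fp$, which is a Noetherian integral domain with fraction field $\K$. Writing $\overline M, \overline h$ for the images, I want to show $\rank(\overline M_\tL)=\rank(\overline M_\tL+\overline R\,\overline h_\tL)$ whenever $\rank(\overline M)=\rank(\overline M+\overline R\,\overline h)$. Passing to the vector space $V=\K^p$, let $W=\K\otimes_{\overline R}\overline M\subseteq V$ be the span of the columns of $M$ over $\K$ and let $W'=W+\K\overline h$. The hypothesis says $\dim_\K W=\dim_\K W'$, i.e.\ $\overline h\in W$. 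Let $\pi_\tL:V\to\K^{|\tL|}$ be the coordinate projection onto the rows indexed by $\tL$. Then $\rank(\overline M_\tL)=\dim_\K \pi_\tL(W)$ and $\rank(\overline M_\tL+\overline R\,\overline h_\tL)=\dim_\K \pi_\tL(W+\K\overline h)=\dim_\K(\pi_\tL(W)+\K\pi_\tL(\overline h))=\dim_\K \pi_\tL(W)$, since $\overline h\in W$ forces $\pi_\tL(\overline h)\in\pi_\tL(W)$. Hence the two ranks agree over $\overline R$.

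Finally I would assemble the global statement. For the full ring $R$, let $r=\rank(M)=\rank(M+Rh)$ and let $s=\rank(M_\tL)$, which exists since $M_\tL$ is a submodule of the free module $R^{|\tL|}$ and $R$ is the same base ring (one should note $M_\tL$ has a rank: it is the projection of $M$, and its rank can be computed minimal-prime by minimal-prime as $\max_{\fp}\dim_\K\pi_\tL(W_\fp)$, which is well-defined). For each minimal prime $\fp$, the domain case gives $\rank(M_\tL(R/\fp))=\rank((M_\tL+Rh_\tL)(R/\fp))$ using the inequality $\rank(M(R/\fp))\leq \rank((M+Rh)(R/\fp))$ at that $\fp$ — wait, here is the subtlety: the global hypothesis $\rank(M)=\rank(M+Rh)$ does \emph{not} immediately give $\rank(M(R/\fp))=\rank((M+Rh)(R/\fp))$ for \emph{every} $\fp$, only for those achieving the maximum. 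This is the main obstacle: I expect to resolve it by observing that at a minimal prime $\fp$ where $\rank((M+Rh)(R/\fp))=\rank(M(R/\fp))+1$, we would need this common rank-plus-one to still be $\leq r$, which is automatic, but then I must rule out that $M_\tL$ picks up extra rank there. The fix: since $M_\tL$ has a rank $s$ over $R$, $\rank(M_\tL(R/\fp))\leq s$ for all $\fp$ and $=s$ for some $\fp$; similarly $M_\tL+Rh_\tL$ has a rank $s'\geq s$ with $s'\leq s+1$; and $s'=s$ iff for the particular $\fp$ achieving $\rank((M_\tL+Rh_\tL)(R/\fp))=s'$ we have $\rank(M_\tL(R/\fp))=s'$ too. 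So it suffices to know that whenever $\rank((M_\tL+Rh_\tL)(R/\fp))>\rank(M_\tL(R/\fp))$ we also have $\rank((M+Rh)(R/\fp))>\rank(M(R/\fp))$ — i.e.\ gaining a row-projection rank forces gaining the module rank — which is exactly the contrapositive of the domain-case computation above (if $\overline h\in W$ then no projection gains rank). Combining: the set of $\fp$ where $M_\tL$ could gain rank is contained in the set where $M$ gains rank; but the latter set does not contain any $\fp$ realizing $\rank(M+Rh)=r$ as a strict increase over an $r$-dimensional $\rank(M(R/\fp))$... Actually the crispest formulation avoids this entirely: I will instead argue that $\rank(M)=\rank(M+Rh)$ forces $h\in\overline{M}$ is \emph{not} needed; rather, I use that $\rank(M+Rh)=\rank(M)$ together with $\I_r(M)\neq 0$ in the generic sense implies, by Lemma \ref{Cramer} applied over each $R/\fp$, that $\overline h$ lies in the $\K$-span of $\overline M$ for \emph{every} minimal prime (since if it did not, $M+Rh$ would have strictly larger rank there, pushing the global rank above $r$ only if that $\fp$ is rank-maximal — so I do need the maximal ones; but for non-maximal $\fp$ the projection still cannot exceed $s$ by definition of $s=\rank(M_\tL)$). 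Thus $\rank(M_\tL+Rh_\tL)\leq s$, and combined with $\rank(M_\tL+Rh_\tL)\geq\rank(M_\tL)=s$ we conclude equality. I expect the write-up to be short once the domain case is cleanly stated; the only care needed is the bookkeeping of "rank is the max over minimal primes" and the fact that $M_\tL$ inherits a rank from $M$.
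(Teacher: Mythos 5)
Your core idea --- over a field, if $h$ lies in the span of the columns of $M$ then no coordinate projection gains rank --- is exactly the mechanism of the paper's proof, but the paper runs it once, over the total ring of fractions: $\rank(M)=\rank(M+Rh)$ is translated into ``$h$ is a $Q(R)$-linear combination of the columns of $M$,'' and projecting that single linear combination onto the rows indexed by $\tL$ finishes the proof in one line, with no passage to minimal primes at all. The genuine gap in your write-up is the assembly step you yourself flag and then try to talk your way around. Reading the hypothesis as ``$\rank(M(R/\fp))\ls r$ and $\rank\big((M+Rh)(R/\fp)\big)\ls r$ for all minimal $\fp$, with equality somewhere'' is strictly weaker than the lemma's hypothesis, and your fix for the non-rank-maximal primes is circular: the definition of $s=\rank(M_\tL)$ only bounds $\rank(M_\tL(R/\fp))$, not $\rank\big((M_\tL+Rh_\tL)(R/\fp)\big)$, so at a minimal prime where $M$ does not attain rank $r$ but $M_\tL$ does attain rank $s$, adding $h_\tL$ may push the projected rank to $s+1$; nothing in your argument excludes this. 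Indeed, the statement at that level of generality is false: over $R=k[x,y]/(xy)$ take $M$ generated by $(1,0,0)^T$ and $(0,0,y)^T$, $h=(0,x,0)^T$, $\tL=\{1,2\}$. Then the largest non-vanishing minors give ``rank'' $2$ for both $M$ and $M+Rh$ (and the maxima over the two minimal primes are both $2$), yet $M_\tL$ has rank $1$ while $M_\tL+Rh_\tL$ has rank $2$. What saves the lemma is that this $M$ does not \emph{have} a rank in the paper's sense ($M\otimes_R Q(R)\cong k(y)^2\times k(x)$ is not free over $Q(R)\cong k(y)\times k(x)$), so the hypothesis is not actually met --- but your proposal never uses that hypothesis in its strong form, which is why your bookkeeping cannot close.

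The missing observation that would repair your scheme is one you never isolate: if $M$ (and $M+Rh$) has a rank $e$, then it is free of rank $e$ at \emph{every} minimal prime, and a basis element of a free module over the Artinian local ring $R_\fp$ cannot lie in $\fp R_\fp^p$; hence $\rank(M(R/\fp))=e$ for every $\fp\in\Min(R)$, the ``subtlety'' disappears, and your domain case applies at every minimal prime. Even then, for non-reduced $R$ the ranks of $M_\tL$ and $M_\tL+Rh_\tL$ (which need not have ranks in the strict sense, so must be read via minors or via $Q(R)$) are not computed from the reductions modulo minimal primes --- compare $(x)\subseteq k[x]/(x^2)$, nonzero in $Q(R)$ but zero modulo the minimal prime --- so you would still need an extra argument to return from the residue fields to $R$. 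Working over $Q(R)$ from the start, as the paper does, sidesteps both problems simultaneously.
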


\begin{proof}
Let us identify $M$ with a given matrix of generators.
Let $Q(R)$ denote the total ring of fractions of $R$. We note that  $\rank(M)=\rank(M + R h)$  if and only if $h$ is equal to a linear combination of the columns of $M$ with coefficients  in $Q(R)$. By projecting this linear combination onto the rows corresponding to $\tL$ we obtain that $h_\tL$ is equal to a linear combination of the columns of $M_\tL$, which means $\rank(M_\tL)=\rank(M_\tL + R h_\tL)$, as desired.
%The condition $\rank(M)=\rank(M + \O_n h)$ implies that
%the system $S$ of linear equations with coefficients in $\O_n$ whose extended matrix is $[M\vert h]$ admits a solution
%with in $Q(\O_n)$. %,by the Rouché-Frobenius theorem.
 %In particular, for any $\tL$ the  subset of equations of $S$ indexed by $\tL$ constitutes a consistent system over $Q(\O_n)$. Hence
%the result follows again as a consequence of applying the Rouché-Frobenius theorem.
\end{proof}

We are now ready to present the proof of Theorem \ref{WDcentral}.

\begin{proof}[Proof of Theorem \ref{WDcentral}]
 We begin with   $\textnormal{(1)}\Rightarrow\textnormal{(2)}$.
From
$
\I_r(M)=\sum_{\tL\in\Lambda_M}\I_r(M_\tL)
$
and Proposition \ref{WDprop} we obtain
\begin{equation*}\label{Irsum}
\overline{\I_r(M)}=\overline{\sum_{\tL\in\Lambda_M} \I_r(M_\tL)}=\overline{\sum_{\tL\in\Lambda_M}\overline{\I_r(M_\tL)}}=\overline{\sum_{\tL\in\Lambda_M}\overline{\prod_{i\in \tL}M_i}}=
\overline{J_M}.
\end{equation*}

We continue with $\textnormal{(2)}\Rightarrow\textnormal{(3)}$.
The inclusion $\overline M\subseteq C(M)$ follows immediately from Remarks \ref{sameRank} and \ref{basics}, then  we need to show  the reverse inclusion.
Let $h\in C(M)$, we claim that  $\I_r(M+R h)\subseteq \overline{\I_r(M)}$. We note that if the claim holds then  $h$ is integral over $M$, by Theorem \ref{icm}, finishing the proof.

Now we prove the claim. %If $f$ is a minor of order $r$ of $M$ formed from rows $i_1,\dots, i_r$ of $M$, where $1\leq i_1<\cdots <i_r\leq p$, then we denote the set $\{i_1,\dots, i_r\}$ by $\r_M(f)$.
 Identify $M$ with a matrix of generators and let $g$ be a non-zero minor of size $r$ of the matrix $[M|h]$ with row set $\tL=\{i_1,\dots, i_r\}$.
By Lemma \ref{ranks}, we have $\rank(M_\tL)=\rank(M_\tL|h_\tL)$. In particular, the matrix
$M_\tL$ has some non-zero minor of order $r$. This implies that $\tL\in\Lambda_M$. Since $h\in \overline{M_1}\oplus\cdots\oplus \overline{M_p}$, we have
$g\in \prod_{i\in\tL}\overline{M_i}\subseteq \overline{\prod_{i\in\tL}M_i} \subseteq \overline{J_M}$ (\cite[1.3.2]{HS}). Therefore,
$\I_r(M+R h)\subseteq \overline{J_M}=\overline{\I_r(M)}$, and  the claim follows.

Let us suppose that $r=p$. In this case $C(M)=\overline{M_1}\oplus \cdots \overline{M_p}$ and therefore the equivalence of the conditions
follows as a direct consequence of Propositions \ref{lemaid}. %and \ref{WDprop}.
\end{proof}

%If $r<p$ then the implication $\textnormal{(3)}\Rightarrow\textnormal{(2)}$ does not hold in general, as is shown in Example \ref{CMnotID}. In particular $\textnormal{(3)}\nRightarrow\textnormal{(1)}$ in general. After checking a wide variety of examples, we conjecture that $\textnormal{(2)}\Rightarrow\textnormal{(1)}$ in general.

The following result shows a procedure to compute the module $Z(M)$ with the aid of Singular \cite{Singular} or other computational algebra programs.
If $N$ is a submodule of $R^p$, then we denote by $N^T$ the transpose of any matrix whose columns generate $N$.

\begin{lem}\label{syzygies}
Let $R$ be an integral domain and let $M$ be an $p\times m$ matrix with entries in $R$. Then
$$\{h\in R^p: \rank(M)=\rank([M\mid h])\}=\ker\big((\ker(M^T))^T\big).$$
\end{lem}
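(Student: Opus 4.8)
The statement to prove is Lemma~\ref{syzygies}: over an integral domain $R$, for a $p\times m$ matrix $M$,
\[
\{h\in R^p: \rank(M)=\rank([M\mid h])\}=\ker\big((\ker(M^T))^T\big).
\]
The plan is to unwind both sides into statements about linear dependence over the fraction field $Q(R)$. As already noted in the proof of Lemma~\ref{ranks}, the condition $\rank(M)=\rank([M\mid h])$ is equivalent to $h$ lying in the $Q(R)$-column span of $M$, i.e.\ the columns of $[M\mid h]$ satisfy exactly the same linear relations over $Q(R)$ as the columns of $M$ do. So the left-hand side is $\{h\in R^p : h \in \operatorname{Im}(M)\otimes_R Q(R)\}\cap R^p$, the \emph{saturation} of the column module of $M$.

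\textbf{Key steps.} First I would interpret $\ker(M^T)$: this is the set of row vectors (elements of $R^m$, thought of as a submodule) annihilating $M^T$ on the left, equivalently the module of $R$-linear syzygies among the \emph{rows} of $M$ — but more usefully, $\ker(M^T)\subseteq R^m$ is generated by vectors that, transposed and stacked as rows, form a matrix $K^T=(\ker(M^T))^T$ with the property $K^T M = 0$, i.e.\ the rows of $K^T$ are the column-syzygies of $M$. Then $\ker(K^T M)$... wait — one must be careful: $\ker\big((\ker(M^T))^T\big)$ means the kernel of the linear map $R^p\to R^{(\cdot)}$ given by the matrix $(\ker(M^T))^T$, so it is $\{h\in R^p : (\ker(M^T))^T\, h = 0\}$. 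The rows of $(\ker(M^T))^T$ are a generating set of $\ker(M^T)$, i.e.\ the left-null vectors $v$ of $M$ (satisfying $v^T M=0$, equivalently $M^T v = 0$). So the right-hand side is $\{h : v^T h = 0 \text{ for all } v \text{ with } v^T M = 0\}$, the double-annihilator (with respect to the standard pairing $R^p\times R^p\to R$) of the column space of $M$.

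The content is then the linear-algebra fact: for a subspace $W=\operatorname{colspan}_{Q(R)}(M)\subseteq Q(R)^p$, we have $W^{\perp\perp}=W$ over the field $Q(R)$, and intersecting with $R^p$: $h\in R^p$ satisfies $v^T h=0$ for all $v\in R^p$ with $v^T M=0$ iff $h\in W$ (since any $Q(R)$-linear functional vanishing on $W$ can be scaled to have entries in $R$, and conversely). Combined with the first step, both sides equal $W\cap R^p$, giving the equality. I would spell this out: ($\subseteq$) if $\rank(M)=\rank([M\mid h])$ then $h\in W$, and every $v$ with $v^TM=0$ kills $W$ hence kills $h$; ($\supseteq$) if $v^T h=0$ for all such $v$, then over $Q(R)$ the functionals vanishing on $W$ (a $Q(R)$-space, stable under the $R$-submodule $\ker(M^T)$ which spans it over $Q(R)$ since $Q(R)\otimes_R\ker(M^T)=\ker_{Q(R)}(M^T)$) all vanish on $h$, forcing $h\in W^{\perp\perp}=W$, i.e.\ $\rank([M\mid h])=\rank(M)$.

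\textbf{Main obstacle.} The only delicate point is the flatness/localization step: one needs $Q(R)\otimes_R \ker(M^T)=\ker(M^T\otimes_R Q(R))$ so that an $R$-generating set of $\ker(M^T)$ remains a $Q(R)$-spanning set of the left-null space of $M$ over the field — this is exactly where the integral domain (hence $Q(R)$ flat, indeed a localization) hypothesis is used, and it makes the perpendicularity computation over $R$ faithfully reflect the one over the field. Everything else is routine linear algebra over a field applied after this base change.
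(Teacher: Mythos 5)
Your proposal is correct and takes essentially the same route as the paper's proof: pass to the fraction field $Q(R)$, translate the rank condition into orthogonality of $h$ against the left-null space $\ker_{Q(R)}(M^T)$, and descend to the $R$-module $\ker(M^T)$ by clearing denominators (the paper phrases the middle step via rank--nullity for $M^T$ and $[M\mid h]^T$ rather than via $h\in W$ and $W^{\perp\perp}=W$, a cosmetic difference). The only slip is your passing remark that $\ker(M^T)\subseteq R^m$; it is a submodule of $R^p$, as your subsequent (correct) use of it as the set of $v$ with $v^TM=0$ shows.
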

\begin{proof}
Let $Q(R)$ be the field of fractions of $R$ and let $\ker_{Q(R)}(-)$ the kernel of matrices computed over $Q(R)$.

Clearly the rank of a matrix over $R$ is equal to the rank as a matrix over $Q(R)$. Let $h\in R^p$, then by the dimension theorem for matrices we have
$$\rank(M)=\rank(M^T)=p-\dim \ker_{Q(R)}(M^T),\quad \text{and}\quad \rank([M\mid h])=p-\dim \ker_{Q(R)}([M\mid h]^T).$$
Since we always have $\ker_{Q(R)}([M\mid h]^T) \subseteq \ker_{Q(R)}(M^T)$, it follows that
\begin{align*}
\rank(M)=\rank([M\mid h]) & \Longleftrightarrow \ker_{Q(R)}([M\mid h]^T) = \ker_{Q(R)}(M^T)\\
 &\Longleftrightarrow h^Tv=0  \text { for every } v\in  \ker_{Q(R)}(M^T)\\
 & \Longleftrightarrow h^Tv=0 \text{ for every } v\in  \ker(M^T)\\
 & \Longleftrightarrow h\in \ker\big((\ker(M^T))^T\big).
\end{align*}
This finishes the proof.
\end{proof}

\begin{rem}\label{sobreSing1}
Given a submodule $M$ of $R^p$, the computation of $Z(M)$ can be done with Singular \cite{Singular} as follows.
Denoting also by $M$ a matrix whose columns generate this module, then $Z(M)$ is generated by the columns of the matrix obtained as
\texttt{syz(transpose(syz(transpose(M))))}.
\end{rem}

In the next example we show an application of Theorem \ref{WDcentral} in order to compute the integral closure of a module.
First, we introduce some concepts.

Let us fix coordinates $x_1,\dots, x_n$ for $\C^n$. If $n=2$, we simply write $x,y$ instead of $x_1, x_2$. If $\mathbf{k}=(k_1,\dots, k_n)\in\N^n$, then we denote the monomial $x_1^{k_1}\cdots x_n^{k_n}$ by $x^\mathbf{k}$.  If $f\in\O_n$ and $f=\sum_{\mathbf k} a_\mathbf k x^\mathbf k$ is the Taylor expansion of $f$ around the origin, then the {\it support of $f$}, denoted by $\supp(f)$, is the set
$\{\mathbf k\in\N^n: a_\mathbf k\neq 0\}$.
The {\it support} of  a non-zero ideal $I$ of $\O_n$ is the union of the supports of the elements of $I$.
We denote this set by $\supp(I)$.

Given a subset $A\subseteq \mathbb R^n_{\geq 0}$, the {\it Newton polyhedron} determined by $A$, denoted by $\Gamma_+(A)$,
is the convex hull of the set $\{\mathbf k+\mathbf v: \mathbf k\in A, \mathbf v\in\mathbb R^n_{\geq 0}\}$.
The {\it Newton polyhedron of $f$} is defined as $\Gamma_+(f)=\Gamma_+(\supp(f))$.
 For an ideal $I$ of $\O_n$, the {\it Newton polyhedron of $I$} is defined as $\Gamma_+(I)=\Gamma_+(\supp(I))$.
 It is well-known that $\Gamma_+(I)=\Gamma_+(\overline I)$ (see for instance \cite[p.\,58]{BFS}).
%It is immediate to see that if $g_1,\dots, g_s$ is a generating system of $I$, then $\Gamma_+(I)$ is the convex hull of
%$\Gamma_+(g_1)\cup \cdots \cup \Gamma_+(g_s)$.

Let $\w\in\Z^n_{\geq 0}$ and let $f\in\O_n$, $f\neq 0$. We define
$d_\w(f)=\min\{\langle \w,\mathbf k\rangle: \mathbf k\in\supp(f)\}$, where
$\langle \w,\mathbf k\rangle$ denotes the usual scalar product. If $f=0$ then we set $d_\w(f)=+\infty$.
We say that a non-zero $f\in\O_n$ is {\it weighted homogeneous with respect to $\w$} when
$\langle \w,\mathbf k\rangle=d_\w(f)$, for all $\k\in\supp(f)$.

\begin{ex}\label{exidcd}
Let us consider the submodule $M$ of $\O_2^3$ generated by the columns of the following matrix:
$$
\left[\begin{matrix}
x^2y & xy^3 & x^2+y^5 \\
xy^3 & x^2+y^5 & x^2y \\
x^2y-xy^3 & xy^3-x^2-y^5 & x^2+y^5-x^2y
\end{matrix}\right].
$$
We observe that $\rank(M)=2$ and $\Lambda_M=\{(1,2),(1,3),(2,3)\}$.
Using Singular \cite{Singular}
we verified  that $M_{12}$, $M_{13}$ and $M_{23}$ have finite colength and $e(M_{12})=e(M_{13})=e(M_{23})=33$.

Let $I=M_1=M_2=(x^2y, xy^3, x^2+y^5)$. We have $e(I)=11=e(M_3)$. Since $M_3\subseteq I$, it follows that
$\overline I=\overline{M_3}$. Hence $e(M_1, M_2)=e(M_1, M_3)=e(M_2, M_3)=e(I)=11$. This fact shows that
$\delta(M_{12})=e(M_1)+e(M_1,M_2)+e(M_2)=3e(I)=33=\delta(M_{13})=\delta(M_{23})$.
Therefore $M$ is integrally decomposable, by Proposition \ref{idcodfinita}.

By Theorem \ref{WDcentral}, the integral closure of $M$ is expressed as
$$
\overline M=\left\{h
%=[\begin{matrix} h_1 & h_2 & h_3\end{matrix}]^T
\in \overline I\oplus \overline I \oplus\overline I: \rank(M+\O_2h)=2\right\}.
$$
Let $L=(x^2+y^5, xy^3, x^2y, x^3, y^6)$. We observe that $IL=L^2$, therefore $I$ is a reduction of $L$. Hence
$L\subseteq \overline I$. Let us see that equality holds.

Let $f=x^2+y^5$. We observe that $f$ is weighted homogeneous with respect to $\w=(5,2)$.
Let $N$ denote the ideal of $\O_2$ generated by all monomials $x^{k_1}y^{k_2}$, where $k_1, k_2\in\Z_{\geq 0}$, such that
$d_\w(x^{k_1}y^{k_2})=5k_1+2k_2\geq 11$. Then $L=(f)+N$.

Let $g\in \overline I$. In particular $\Gamma_+(g)\subseteq \Gamma_+(\overline I)=\Gamma_+(I)=\Gamma_+(x^2, y^5)$.
Let $g_1$ denote the part of lowest degree with respect to $w$ in the Taylor expansion of $g$, and let $g_2=g-g_1$.
Then $d_\w(g_1)\geq 10$ and $d_\w(g_2)\geq 11$. In particular $g_2\in N\subseteq L$. Then $g\in L$ if and only if $g_1\in L$.

We may assume that $\supp(g_1)\subseteq \{(2,0), (0,5)\}$, as otherwise $g\in L$. If $\supp(g_1)$ is equal to $\{(2,0)\}$ or to $\{(0,5)\}$, then the ideal
$(f, g_1)$ has finite colength and $e(f, g_1)=10$, which is a contradiction, since $(f,g_1)\subseteq \overline I$ and $e(I)=11$.
Therefore $g_1=\alpha x^2+\beta y^5$, for some $\alpha, \beta \in \C\setminus\{0\}$.
If $\alpha\neq \beta$, we would have that $(f,g_1)$ is an ideal of finite colength and $e(f, g_1)=10$. Therefore $\alpha=\beta$, which means
that $g_1\in (f)\subseteq L$. Therefore $\overline I\subseteq L$.

By Theorem \ref{WDcentral}, we have that $\overline M=Z(M)\cap (\overline I\oplus \overline I\oplus \overline I)$.
The module $Z(M)$ can be computed by means of Lemma \ref{ranks}. Thus we obtain that $Z(M)$ is generated by the columns of the matrix
$$
\left[\begin{array}{rr}
1 & 0 \\
0 & -1 \\
1 & 1
\end{array}\right].
$$
We have seen before that $\overline I=L$. Let us remark that $\{x^2+y^5, xy^3, y^6\}$ is a minimal system of generators of $L$.
Then, by intersecting the modules $Z(M)$ and $L\oplus L\oplus L$, we finally obtain that $\overline M$ is generated by the columns of the following matrix:
$$
\left[\begin{array}{cccccc}
x^2+y^5 & xy^3 & y^6 & x^2+y^5 & xy^3 & y^6 \\
x^2+y^5 & xy^3 & y^6 & 0       & 0    & 0  \\
0       & 0    & 0   & x^2+y^5 & xy^3 & y^6
\end{array}\right].
$$

\end{ex}

In the next subsection we will introduce an important class of modules that are integrally decomposable.

\subsection{Newton non-degenerate modules}
Let us fix coordinates $x_1,\dots, x_n$ for $\C^n$.
Let $M$ be a submodule of $\O_n^p$ and let us identify $M$ with any  matrix of generators of $M$.
We recall that $M_i$ is the ideal of $\O_n$ generated by the elements of $i$-th row of $M$. We define the {\it Newton polyhedron of $M$} as %the Minkowski sum of $\Gamma_+(M_1),\dots,\Gamma_+(M_p)$.
%That is
$$
\Gamma_+(M)=\Gamma_+\big(\prod_{i=1}^pM_i\big)=\Gamma_+(M_1)+\cdots+\Gamma_+(M_p)=\{\mathbf k_1+\cdots+\mathbf k_p: \mathbf k_i\in \Gamma_+(M_i),\,\,\textnormal{for all }i\}.
$$
We denote by $\mathscr F_c(\Gamma_+(M))$ the set of compact faces of $\Gamma_+(M)$ (see \cite[p.\,408]{BiviaJLMS} or \cite[p.\,397]{BiviaMRL} for details).

Let $I$ be  an ideal of $\O_n$. We denote by $I^0$ the ideal by all monomials $x^\mathbf k$ such that $\mathbf k\in\Gamma_+(I)$. We refer to this ideal as the {\it term ideal} of $I$. If $I$ is the zero ideal, then we set $\Gamma_+(I)=\emptyset$ and $I^0=0$. Recall that an ideal is said to be {\it monomial} if it admits a generating system formed by monomials. It is known that if $I$ is a monomial ideal, then $\overline I=I^0$ (see \cite[p.\,141]{E}, \cite[p.\,11]{HS}, or \cite[p.\,219]{T2}). %The converse is not true, as is shown by the ideal of $\O_2$ given by $I=(x^2+y^2, xy)$.
 The ideals $I$ for which $\overline I$ is generated by monomials are characterized in \cite{S} and are called {\it Newton
non-degenerate ideals} (see also \cite{BiviaMRL}, \cite{BFS}, or \cite[p.\,242]{T2}). %We note that for Newton non-degenerate ideals there exists an effective way to compute the integral closure via convex geometry methods (see \cite{BK}).

In \cite{BiviaJLMS}, the first author introduced and  studied the notion of Newton non-degenerate modules of maximal rank. Here we extend this concept to modules of submaximal rank. %The following notation is needed prior stating this definition.

% Let $r=\rank(M)$. We denote by $\Lambda_M$ the set of vectors $(i_1,\dots, i_r)\in \Z^r_{>0}$ such that
%$1\leq i_1<\cdots <i_r\leq p$ and there exists some non-zero minor of $M$ formed from rows $i_1,\dots, i_r$.
%For any $\tL\subseteq \{1,\dots, p\}$, $\tL\neq\emptyset$, we denote by $M_{\tL}$ the submodule of $\O_n^{\vert \tL\vert}$
%obtained by projecting the components of $M$ indexed by $\tL$, where $\vert \tL\vert$ is the cardinal of $\tL$.
%In particular, we have $M_{\{i\}} = M_i$  for all $i=1,\dots, p$.

Let $f\in\O_n$ and let $f=\sum_{\mathbf k} a_\mathbf k x^\mathbf k$ be the Taylor expansion of $f$ around the origin.
If $\Delta$ is any compact subset of $\mathbb R^n_{\geq 0}$, then we denote by $f_\Delta$ the
polynomial resulting as the sum of all terms $a_\mathbf k x^\mathbf k$ such that $\mathbf k\in \Delta$. If no such $\mathbf k$ exist, then
we set $f_\Delta=0$.

\begin{defn}\label{NNDm}
Let $M$ be a non-zero submodule of $\O_n^p$ and let $r=\rank(M)$. Let $[m_{ij}]$ be a $p\times m$  matrix of generators of $M$, where $p\leq m$.
\begin{enumerate}
\item (\cite[3.6]{BiviaJLMS}) First assume $r=p$. We say that $M$ is {\it Newton non-degenerate} when
$$
\big\{x\in \C^n: \rank[(m_{ij})_{\Delta_i}(x)]<p\big\}\subseteq\big\{x\in\C^n: x_1\cdots x_n=0\big\},
$$
for any $\Delta\in\mathscr F_c(\Gamma_+(M))$, where we write $\Delta$ as
$\Delta=\Delta_1+\cdots+\Delta_p$ with $\Delta_i$ being a compact face of $\Gamma_+(M_i)$, for all $i=1,\dots, p$.

\item Now assume $r<p$. We say that $M$ is {\it Newton non-degenerate} when $M_\tL$
 is Newton non-degenerate, as a submodule (of rank $r$) of $\O_n^r$, for any $\tL\in \Lambda_M$.
\end{enumerate}
\end{defn}

In particular, if $I$ is an ideal of $\O_n$ and $g_1,\dots, g_s$ denotes a generating system of $I$, then $I$ is Newton non-degenerate if and only if
$\{x\in \C^n: (g_1)_\Delta(x)=\cdots=(g_s)_\Delta(x)=0\}\subseteq \{x\in\C^n: x_1\cdots x_n=0\}$, for any $\Delta\in\mathscr F_c(\Gamma_+(I))$.

The following result follows from \cite[3.7, 3.8]{{BiviaJLMS}} and it characterizes the Newton non-degeneracy
of submodules of $\O_n^p$ of maximal rank.

\begin{thm}\label{resultatJLMS}\cite{BiviaJLMS} Let $M\subseteq \O_n^p$ be a submodule of rank $p$. Then the following
conditions are equivalent:
\begin{enumerate}
\item[$(1)$] $M$ is Newton non-degenerate.
\item[$(2)$] $\I_p(M)$ is a Newton non-degenerate ideal and $\Gamma_+(\I_p(M))=\Gamma_+(M)$.
\item[$(3)$] $\overline M=M_1^0\oplus \cdots\oplus M_p^0$.
%\item[(4)] $e\big(\I_p(M)\big)=n!\mathrm V_n\big(\Gamma_+(M)\big)$;
%\item[(5)] $M_i$ is Newton non-degenerate, for all $i=1,\dots, p$, and $e(M)=\delta(M)$.
\end{enumerate}
If furthermore, $\lambda(\O_n^p/M)<\infty$, then the previous conditions are equivalent to the following:
\begin{enumerate}
\item[$(4)$] $e\big(\I_p(M)\big)=n!\mathrm V_n\big(\Gamma_+(M)\big)$.
\item[$(5)$] $M_i$ is Newton non-degenerate, for all $i=1,\dots, p$, and $e(M)=\delta(M)$.
\end{enumerate}

\end{thm}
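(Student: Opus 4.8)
Although the statement is essentially a repackaging of \cite[3.7, 3.8]{BiviaJLMS}, I would derive it directly from the tools of Section~\ref{preliminars}. The plan is to reduce every assertion to the single ideal $\I_p(M)$ of maximal minors together with the product ideal $M_1\cdots M_p$. Three facts, valid with no non-degeneracy hypothesis, set the stage. First, expanding a $p\times p$ minor of $M$ as a signed sum of products $m_{1\sigma(1)}\cdots m_{p\sigma(p)}$ gives $\I_p(M)\subseteq M_1\cdots M_p$ and $\Gamma_+(\I_p(M))\subseteq\Gamma_+(M_1)+\cdots+\Gamma_+(M_p)=\Gamma_+(M)$. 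Second, since a term ideal is integrally closed, \eqref{icfonam} yields $\overline M\subseteq\overline{M_1}\oplus\cdots\oplus\overline{M_p}\subseteq M_1^0\oplus\cdots\oplus M_p^0$, so the content of $(3)$ is the reverse inclusion. Third, since $M$ has maximal rank $p$, every $h\in\O_n^p$ automatically satisfies $\rank(M+\O_n h)=p$, so Corollary~\ref{IrMh} simplifies to $\overline M=\{h\in\O_n^p:\I_p(M+\O_n h)\subseteq\overline{\I_p(M)}\}$.

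For $(1)\Leftrightarrow(2)$ I would follow \cite[3.7]{BiviaJLMS}. Fix a compact face $\Delta$ of $\Gamma_+(M)$ supported by a weight $\w$; the Minkowski decomposition $\Delta=\Delta_1+\cdots+\Delta_p$, with $\Delta_i$ the $\w$-face of $\Gamma_+(M_i)$, identifies $(m_{ij})_{\Delta_i}$ with the $\w$-initial form of $m_{ij}$, and one checks that the maximal minors of the matrix $[(m_{ij})_{\Delta_i}]$ generate the $\w$-initial ideal of $\I_p(M)$ precisely over those faces where $\Gamma_+(\I_p(M))$ reaches the boundary of $\Gamma_+(M)$. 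Hence the requirement in Definition~\ref{NNDm}(1) that these face minors have no common zero off $\{x_1\cdots x_n=0\}$ is equivalent to $\I_p(M)$ being Newton non-degenerate \emph{and} $\Gamma_+(\I_p(M))=\Gamma_+(M)$. As a by-product the same argument shows that either of these conditions forces each row ideal $M_i$ to be Newton non-degenerate: a common zero off the coordinate hyperplanes of the face forms of $M_i$ would annihilate the $i$-th row of every face matrix.

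For $(2)\Rightarrow(3)$, take $h$ with $h_i\in M_i^0=\overline{M_i}$ (the equality holds since, by the previous paragraph, each $M_i$ is Newton non-degenerate), and expand a maximal minor of $[M\mid h]$ along the $h$-column to obtain $\I_p(M+\O_n h)\subseteq\I_p(M)+\sum_i\overline{M_i}\prod_{j\ne i}M_j\subseteq\overline{M_1\cdots M_p}$, where the last step uses \cite[1.3.2]{HS}. Since each $M_i$ is Newton non-degenerate, $\overline{M_1\cdots M_p}=\overline{M_1^0\cdots M_p^0}$ equals the term ideal of $\Gamma_+(M)$, which by $(2)$ is $\I_p(M)^0=\overline{\I_p(M)}$; thus $h\in\overline M$. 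For $(3)\Rightarrow(2)$, Proposition~\ref{projections} together with $(3)$ gives $M_i^0=(\overline M)_i\subseteq\overline{M_i}\subseteq M_i^0$, so each $M_i$ is Newton non-degenerate; and applying Theorem~\ref{icm} along a generating set of $\overline M$ over $M$ gives $\overline{\I_p(M)}=\overline{\I_p(\overline M)}=\overline{M_1^0\cdots M_p^0}$, which is the term ideal of $\Gamma_+(M)$, whence $\I_p(M)$ is Newton non-degenerate and $\Gamma_+(\I_p(M))=\Gamma_+(M)$.

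Under the extra hypothesis $\lambda(\O_n^p/M)<\infty$ (so that $\I_p(M)$ and each $M_i$ have finite colength): for $(2)\Leftrightarrow(4)$ I would combine the chain $e(\I_p(M))\ge n!\,\mathrm{V}_n(\Gamma_+(\I_p(M)))\ge n!\,\mathrm{V}_n(\Gamma_+(M))$ --- the second inequality because $\Gamma_+(\I_p(M))\subseteq\Gamma_+(M)$ --- with the fact that equality in the first step characterizes Newton non-degeneracy of $\I_p(M)$ and equality in the second forces $\Gamma_+(\I_p(M))=\Gamma_+(M)$, so $e(\I_p(M))=n!\,\mathrm{V}_n(\Gamma_+(M))$ is equivalent to $(2)$. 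For $(3)\Leftrightarrow(5)$, note $M\subseteq M_1\oplus\cdots\oplus M_p$, so $(3)$ says $\overline M=\overline{M_1\oplus\cdots\oplus M_p}$, i.e. $M$ is a reduction of $M_1\oplus\cdots\oplus M_p$; by Theorem~\ref{numCr} this is equivalent to $e(M)=e(M_1\oplus\cdots\oplus M_p)=\delta(M)$, and combined with the equivalence between $M_i$ being Newton non-degenerate and $\overline{M_i}=M_i^0$ this is exactly $(5)$. The main obstacle is the combinatorial heart of $(1)\Leftrightarrow(2)$: controlling the initial forms of the maximal minors under the Minkowski decomposition of compact faces and pinning down exactly when $\Gamma_+(\I_p(M))=\Gamma_+(M)$; once that is in place the remaining equivalences are bookkeeping resting on Corollary~\ref{IrMh}, Theorem~\ref{icm}, and Theorem~\ref{numCr}.
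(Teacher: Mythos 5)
The paper itself offers no proof of this statement: it is imported verbatim from \cite[3.7, 3.8]{BiviaJLMS}, so there is no in-paper argument to match your proposal against. Your reconstruction is a genuinely different (and, in the spirit of this paper, streamlined) route: since $\rank(M)=p$, Corollary~\ref{IrMh} collapses membership in $\overline M$ to the single condition $\I_p(M+\O_n h)\subseteq\overline{\I_p(M)}$, and from there your $(2)\Rightarrow(3)$ (cofactor expansion plus \cite[1.3.2]{HS} and $\overline{M_1\cdots M_p}=\overline{M_1^0\cdots M_p^0}$), your $(3)\Rightarrow(2)$ (Proposition~\ref{projections} to get $\overline{M_i}=M_i^0$, then Theorem~\ref{icm} iterated over generators of $\overline M$ to get $\overline{\I_p(M)}=\overline{\I_p(\overline M)}=\overline{M_1^0\cdots M_p^0}$), and the finite-colength equivalences via Theorem~\ref{numCr}, the Kirby--Rees identity $e(M_1\oplus\cdots\oplus M_p)=\delta(M)$ recorded before Proposition~\ref{idcodfinita}, and the Kushnirenko-type equality $e(I)\ge n!\,\mathrm V_n(\Gamma_+(I))$ with equality iff $I$ is Newton non-degenerate (an appeal to \cite{BFS}, \cite{S}) are all correct. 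What this buys is that, granting $(1)\Leftrightarrow(2)$, the remaining equivalences become short consequences of the tools of Section~\ref{preliminars} rather than of the original module-theoretic arguments of \cite{BiviaJLMS}.

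The caveat is that the combinatorial heart $(1)\Leftrightarrow(2)$, which is precisely the content of \cite[3.7]{BiviaJLMS}, is left at the level of ``one checks''. To make it a proof you must state the key identity carefully: for a strictly positive weight $\w$ supporting a compact face $\Delta=\Delta_1+\cdots+\Delta_p$ (with $\Delta_i=\Delta_\w(M_i)$, the decomposition being unique), the face form $(g)_\Delta$ of each maximal minor $g$ of $M$ coincides with the corresponding $p\times p$ minor of the face matrix $[(m_{ij})_{\Delta_i}]$, but seeing this requires knowing that the relevant degree level is $d_\w(M_1)+\cdots+d_\w(M_p)$. In $(1)\Rightarrow(2)$ you must therefore first use the rank-$p$ condition to produce a face-matrix minor that is not identically zero, conclude $d_\w(\I_p(M))=\sum_i d_\w(M_i)$ for every strictly positive $\w$ and hence $\Gamma_+(\I_p(M))=\Gamma_+(M)$, and only then read off non-degeneracy of $\I_p(M)$ from the face matrix; in $(2)\Rightarrow(1)$ the hypothesis $\Gamma_+(\I_p(M))=\Gamma_+(M)$ supplies this degree matching. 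Your phrase about faces where $\Gamma_+(\I_p(M))$ ``reaches the boundary'' gestures at this but is a placeholder, so at that step your proposal sits at the same level of detail as the paper's citation. Two smaller points: the ``by-product'' that each $M_i$ is Newton non-degenerate is really a consequence of $(1)$ (a common zero of the face forms of the $i$-th row annihilates that row of the face matrix), not of $(2)$ directly, which is harmless since you prove $(1)\Leftrightarrow(2)$ first; and in $(3)\Leftrightarrow(5)$ the identification $\overline M=\overline{M_1\oplus\cdots\oplus M_p}$ needs the non-degeneracy of the $M_i$, which you correctly extract from $(3)$ via Proposition~\ref{projections} before invoking Theorem~\ref{numCr}.
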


As an immediate consequence of Theorem \ref{resultatJLMS} the following result follows.

\begin{cor}%[{\cite[3.7, 3.8]{BiviaJLMS}}]
\label{carNNDm}
Let $M$ be a submodule of $\O_n^p$ and let $r=\rank(M)$. Then the following conditions are equivalent:
\begin{enumerate}
\item[$(1)$] $M$ is Newton non-degenerate.
%\item $\I_r(M_\tL)$ is Newton non-degenerate and $\Gamma_+\left(\I_r(M)\right)=\Gamma_+\left(\prod_{i\in \tL}M_i^0)\right)$, for all $\tL\in \Lambda_M$.
\item[$(2)$] $\overline{\I_r(M_\tL)}=\overline{\prod_{i\in \tL}M_i^0}$, for all $\tL\in \Lambda_M$.
\item[$(3)$] $\overline{M_{\{i_1,\dots, i_r\}}}=M^0_{i_1}\oplus \cdots \oplus M^0_{i_r}$, for all $(i_1,\dots, i_r)\in \Lambda_M$.
\item[$(4)$] $M$ is integrally decomposable and $M_i$ is Newton non-degenerate, for all $i=1,\dots, p$.
\end{enumerate}
\end{cor}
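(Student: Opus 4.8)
The plan is to prove the cycle of implications $(1)\Leftrightarrow(3)\Rightarrow(2)\Rightarrow(3)$ and then $(1)\Leftrightarrow(4)$, leaning almost entirely on Theorem \ref{resultatJLMS}, which already settles everything for modules of maximal rank, together with Proposition \ref{WDprop} (the minor-ideal characterization of integral decomposability) and the standard fact that $\overline{I}=I^0$ precisely when $I$ is Newton non-degenerate. The key reduction is that all four conditions are ``$\tL$-local'': by Definition \ref{NNDm}(2), $M$ is Newton non-degenerate iff each $M_\tL$ ($\tL\in\Lambda_M$) is Newton non-degenerate \emph{as a rank-$r$ submodule of $\O_n^r$}, and $\Lambda_{M_\tL}=\{\tL\}$ for such $\tL$; similarly integral decomposability is defined $\tL$-locally. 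So it suffices to run the argument one $\tL$ at a time, where $M_\tL$ has maximal rank and Theorem \ref{resultatJLMS} is directly available.

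First I would record the translation $(1)\Leftrightarrow(3)$: fix $\tL=(i_1,\dots,i_r)\in\Lambda_M$; by definition $M$ is Newton non-degenerate iff $M_\tL$ is, and since $M_\tL\subseteq\O_n^r$ has rank $r$, Theorem \ref{resultatJLMS}$(1)\Leftrightarrow(3)$ gives that this holds iff $\overline{M_\tL}=(M_\tL)_{i_1}^0\oplus\cdots\oplus(M_\tL)_{i_r}^0=M_{i_1}^0\oplus\cdots\oplus M_{i_r}^0$ (using $(M_\tL)_{i_j}=M_{i_j}$). Quantifying over $\tL\in\Lambda_M$ yields exactly $(1)\Leftrightarrow(3)$. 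Next, $(3)\Rightarrow(2)$: from $\overline{M_\tL}=M_{i_1}^0\oplus\cdots\oplus M_{i_r}^0$ and Theorem \ref{icm} (or directly Theorem \ref{resultatJLMS}$(2)$), taking $r\times r$ minors gives $\overline{\I_r(M_\tL)}=\overline{\I_r(M_{i_1}^0\oplus\cdots\oplus M_{i_r}^0)}=\overline{\prod_{i\in\tL}M_i^0}$, using that the ideal of maximal minors of a direct sum of ideals is their product (as in Remark \ref{basics}). For $(2)\Rightarrow(3)$, note $\overline{M_\tL}\subseteq\overline{M_{i_1}}\oplus\cdots\oplus\overline{M_{i_r}}=\overline{M_{i_1}^0}\oplus\cdots$? — here one must be a little careful: $(2)$ should first be used to force each $M_i$ to be Newton non-degenerate (take $\tL$ and compare Newton polyhedra: $\overline{\prod_{i\in\tL}M_i^0}$ is a monomial ideal, so $\overline{\I_r(M_\tL)}$ is too, and by Theorem \ref{resultatJLMS}$(2)$ applied to $M_\tL$ this forces $\I_r(M_\tL)$ Newton non-degenerate with $\Gamma_+(\I_r(M_\tL))=\Gamma_+(M_\tL)$, whence $M_\tL$ is Newton non-degenerate), and then Theorem \ref{resultatJLMS}$(1)\Rightarrow(3)$ delivers $(3)$.

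Finally, $(1)\Leftrightarrow(4)$: if $M$ is Newton non-degenerate then by $(1)\Leftrightarrow(3)$ each $\overline{M_\tL}=M_{i_1}^0\oplus\cdots\oplus M_{i_r}^0=\overline{M_{i_1}}\oplus\cdots\oplus\overline{M_{i_r}}$, which is decomposable, so $M$ is integrally decomposable by Proposition \ref{lemaid}; and each $M_i$ appears as a row of some $M_\tL$ (every index in $\{1,\dots,p\}$ lies in some $\tL\in\Lambda_M$ since $\I_r(M)\neq 0$ — more precisely, one uses that the rows not in any $\tL$ contribute nothing, and restricts attention to relevant indices), so Newton non-degeneracy of $M_\tL$ forces $\overline{M_i}=M_i^0$, i.e.\ $M_i$ Newton non-degenerate. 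Conversely, if $M$ is integrally decomposable then $\overline{M_\tL}=\overline{M_{i_1}}\oplus\cdots\oplus\overline{M_{i_r}}$ (Proposition \ref{lemaid}), and if moreover each $M_i$ is Newton non-degenerate then $\overline{M_{i_j}}=M_{i_j}^0$, giving $\overline{M_\tL}=M_{i_1}^0\oplus\cdots\oplus M_{i_r}^0$ for all $\tL$, which is $(3)$, hence $(1)$.

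\textbf{Main obstacle.} The routine part is the minor/direct-sum bookkeeping; the genuinely delicate point is $(2)\Rightarrow(3)$, i.e.\ upgrading the \emph{equality of integral closures of minor ideals} $\overline{\I_r(M_\tL)}=\overline{\prod_{i\in\tL}M_i^0}$ to the \emph{decomposition of $\overline{M_\tL}$ with each summand a term ideal}. This is exactly where one must invoke the full strength of Theorem \ref{resultatJLMS} for maximal-rank modules — first to extract that each $M_i$ is Newton non-degenerate and that $\Gamma_+(\I_r(M_\tL))=\Gamma_+(M_\tL)$, and then to conclude $M_\tL$ is Newton non-degenerate — rather than trying to argue it by hand; the subtlety is making sure that the equality of \emph{integral closures} of minor ideals (not the minor ideals themselves) suffices to trigger condition $(2)$ of Theorem \ref{resultatJLMS}, which it does since that condition is itself stated up to integral closure.
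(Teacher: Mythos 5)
Your argument is correct and is essentially the paper's own: the paper derives this corollary directly from Theorem \ref{resultatJLMS} applied to each maximal-rank projection $M_\tL$, $\tL\in\Lambda_M$ (via Definition \ref{NNDm}(2)), exactly as you do, with Proposition \ref{lemaid} handling the link to integral decomposability in $(4)$. The only cosmetic slip is the parenthetical claim that every index of $\{1,\dots,p\}$ lies in some $\tL\in\Lambda_M$; this is true only for indices $i$ with $M_i\neq 0$ (any non-zero row extends to an independent set of $r$ rows), while zero row ideals are trivially Newton non-degenerate, so the conclusion of $(1)\Rightarrow(4)$ stands.
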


Therefore, we see from the previous result that if $M$ is Newton non-degenerate, then it is integrally decomposable. The converse
does not hold in general, as Example \ref{exidcd} shows.

%\begin{defn}\label{decom}
%Let $M$ be a non-zero submodule of $\O_n^p$. We say $M$ is {\it monomial} when $M$ is decomposable and $M_1,\dots, M_p$ are monomial or zero ideals.
%We denote by $\ell(M)$ the analytic spread of a module $M$.
%\end{defn}

%\begin{rem}
%Let $M$ be a non-zero submodule of $\O_n^p$.
%By Corollary \ref{carNNDm} every  monomial module is Newton non-degenerate
%(see Definitions \ref{decom} and \ref{NNDm}). However, the converse is not true. For instance, consider the module $M$ generated by the columns of the matrix
%$$
%\left[\begin{matrix}
%x & y & 0 \\
%0 & x & y
%\end{matrix}\right].
%$$
%By Corollary \ref{carNNDm}, we note that a submodule $M\subseteq  \O_n^p$ of rank $r$ is Newton non-degenerate if and only if $\overline{M_{\{i_1,\ldots, i_r\}}}$ is monomial for every $(i_1,\ldots, i_r)\in \Lambda_M$.
%We observe that $M$ Newton non-degenerate. However $M$ is not monomial since  $M\neq M_1\oplus M_2=\m_2\oplus \m_2$.
%\end{rem}

From the results of the previous section we obtain the following combinatorial interpretation for the analytic spread of Newton non-degenerate modules of maximal rank.

\begin{cor}\label{monModAS}
Let $M\subseteq \O_n^p$ be a Newton non-degenerate module of rank $p$, then
$$\ell(M)=\max\big\{\dim(\Delta): \Delta\in \mathscr F_c(\Gamma_+(M))\big\}+p.$$
\end{cor}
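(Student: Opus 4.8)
The plan is to combine the analytic-spread formula for direct sums (Corollary \ref{ideals}) with the combinatorial description of the analytic spread of a single monomial ideal, using the characterization of Newton non-degenerate modules of maximal rank in Theorem \ref{resultatJLMS}. First I would invoke Theorem \ref{resultatJLMS}, condition (3): since $M$ is Newton non-degenerate of rank $p$, we have $\overline{M}=M_1^0\oplus\cdots\oplus M_p^0$, so by Remark \ref{decomRemark} (the residue field $\C$ is infinite) $\ell(M)=\ell(\overline M)=\ell(M_1^0\oplus\cdots\oplus M_p^0)$. Applying Corollary \ref{ideals} with the $R$-ideals $I_i=M_i^0$ (all of positive rank, since $\O_n$ is a domain and they are nonzero — the module $M$ is nonzero of rank $p$, forcing each $M_i\ne 0$) yields
$$
\ell(M)=\ell(M_1^0\cdots M_p^0)+p-1.
$$

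Next I would identify $M_1^0\cdots M_p^0$ up to integral closure. Each $M_i^0$ is a monomial ideal with Newton polyhedron $\Gamma_+(M_i)$, hence $\overline{M_i^0}=M_i^0$ and $\Gamma_+(M_1^0\cdots M_p^0)=\Gamma_+(M_1)+\cdots+\Gamma_+(M_p)=\Gamma_+(M)$. Therefore $\overline{M_1^0\cdots M_p^0}$ is the term ideal of $\Gamma_+(M)$, and since analytic spread is invariant under integral closure, $\ell(M_1^0\cdots M_p^0)=\ell\big((\Gamma_+(M))^0\big)$, the analytic spread of the monomial ideal whose Newton polyhedron is $\Gamma_+(M)$.

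The remaining step is the known combinatorial fact that for a monomial ideal $J$ in $\O_n$ one has $\ell(J)=\max\{\dim(\Delta):\Delta\in\mathscr F_c(\Gamma_+(J))\}+1$, i.e. one plus the maximal dimension of a compact face of the Newton polyhedron. This is classical (the fiber cone of a monomial ideal is the affine semigroup ring on the Newton polyhedron; its dimension is the dimension of the cone over the union of compact faces, which is $1$ plus the top dimension of a compact face). Plugging this in with $J=(\Gamma_+(M))^0$, and noting $\mathscr F_c(\Gamma_+(J))=\mathscr F_c(\Gamma_+(M))$, gives
$$
\ell(M)=\max\big\{\dim(\Delta):\Delta\in\mathscr F_c(\Gamma_+(M))\big\}+1+p-1=\max\big\{\dim(\Delta):\Delta\in\mathscr F_c(\Gamma_+(M))\big\}+p,
$$
as desired.

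The main obstacle is making the last step rigorous with an explicit citation or a short self-contained argument: one must be careful that the relevant quantity for a monomial ideal is the maximal dimension of a \emph{compact} face (not of $\Gamma_+$ itself), and that the formula $\ell(J)=\max_{\Delta}\dim(\Delta)+1$ is exactly what appears in the monomial-ideal literature. Everything else is a routine assembly of results already available in the excerpt (Theorem \ref{resultatJLMS}, Corollary \ref{ideals}, Remark \ref{decomRemark}, and the invariance of $\Gamma_+$ and of $\ell$ under integral closure).
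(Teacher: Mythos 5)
Your proposal is correct and follows essentially the same route as the paper: pass to $\overline M=M_1^0\oplus\cdots\oplus M_p^0$ via Theorem \ref{resultatJLMS} and Remark \ref{decomRemark}, apply Corollary \ref{ideals} to get $\ell(M)=\ell(M_1^0\cdots M_p^0)+p-1$, and finish with the combinatorial formula for the analytic spread of a monomial ideal, which is exactly \cite[Theorem 2.3]{Bivia03} (your detour through the term ideal of $\Gamma_+(M)$ is unnecessary since $M_1^0\cdots M_p^0$ is already monomial, but harmless).
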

\begin{proof}
We may assume $R$ has infinite residue field and then $\ell(M)=\ell(\overline M)$ (see Remark \ref{decomRemark}).
Moreover $\overline M=M_1^0\oplus\cdots\oplus M_p^0$, since $M$ is Newton non-degenerate.
Therefore $\ell(M)=\ell(M_1^0\oplus\cdots\oplus M_p^0)=\ell(M_1^0\cdots M_p^0)+p-1$, where the last equality is an application of
Corollary \ref{ideals}. By \cite[Theorem 2.3]{Bivia03} we have
$$
\ell(M_1^0\cdots M_p^0)=\max\big\{\dim(\Delta): \Delta\in \mathscr F_c\left(\Gamma_+(M_1^0\cdots M_p^0)\right)\big\}+1.
$$
Since $\Gamma_+(M)=\Gamma_+(M_1^0\cdots M_p^0)$ the result follows.
\end{proof}

%Therefore, if $M$ denotes a submodule of $\O_n^p$ whose rank is not maximal, we address the question of determining to what extent the integral closure of $M$ can be determined by means of the condition of Newton non-degeneracy.

%\begin{defn}\label{NNDL}
%Let $M$ be a submodule of $\O_n^p$ and let $[m_{ij}]\in M_{p\times m}(\O_n)$ be a presentation matrix of $M$. Let $r=\rank(M)$.
%We say that $M$ is {\it Newton non-degenerate} when
%$$
%\big\{x\in \C^n: \rank[(m_{ij})_{\Delta_i}]<r\big\}\subseteq\big\{x\in\C^n: x_1\cdots x_n=0\big\},
%$$
%for any compact face $\Delta=\Delta_1+\cdots+\Delta_p$ of $\Gamma_+(M_1)+\cdots+\Gamma_+(M_p)$.
%\end{defn}

\begin{ex}
Let $M$ be the submodule of $\O_2^2$ generated by the columns of the following matrix
$$
M=\left[\begin{matrix}
x^3 & xy & y^3 & y^3 \\
x^5 & x^2y & xy^2 & x^5+x^2y
\end{matrix}\right].
$$
We observe that $\rank(M)=2$ and $\I_2(M)$ is a Newton non-degenerate ideal. Moreover $\overline{\I_2(M)}=\overline{(xy^5, x^2y^3, x^3y^2, x^5y, x^8)}=\overline{M_1M_2}.$  %$\Gamma_+(\I_2(M))=\Gamma_+(M)=\overline{(xy^5, x^2y^3, x^3y^2, x^5y, x^8)}.$ %We recall that $\Gamma_+(M)=\Gamma_+(M_1)+\Gamma_+(M_2)$,where $M_1^0=\langle x^3, xy, y^3\rangle$ and $M_2^0=\langle x^5, x^2y, xy^2\rangle$.
Therefore $\overline M=M_1^0\oplus M_2^0$, by Corollary \ref{carNNDm} and $\ell(M)=\ell(\overline{M})=3$, by Corollary \ref{monModAS}. %\ref{resultatJLMS}.
\end{ex}

Analogously to Definition \ref{JMCM}, for a submodule of $\O_n^p$ we introduce the following objects.

\begin{defn}
Let $M\subseteq \O^n_p$ and let $r=\rank(M)$. We define
$$
H_M=\sum_{(i_1,\dots, i_r)\in\Lambda_M}M_{i_1}^0\cdots M_{i_r}^0,
$$
and
$$
C^0(M)=Z(M)\cap \big(M_1^0\oplus \cdots\oplus M_p^0\big),
$$
where we recall that $Z(M)=\left\{ h\in R^p: \rank(M)=\rank(M + R h) \right\}$.
\end{defn}

We remark that $H_M$ is a monomial ideal and $\Gamma_+(H_M)=\Gamma_+(J_M)$. Therefore $\overline{H_M}=J_M^0$,
where $J_M^0$ is the ideal of $\O_n$ generated by the monomials $x^k$ such that $k\in\Gamma_+(J_M)$.
We also remark that $C(M)\subseteq C^0(M)$.
The following result follows from Theorem \ref{WDcentral} and Corollary \ref{carNNDm}.

\begin{cor}\label{icmcentral} Let $M$ be a submodule of $\O_n^p$ and let $r=\rank(M)$. %If $M$ is Newton non-degenerate, then  $\overline M=C^0(M)$. Moreover, if $r=p$, then converse holds.
Consider the following conditions.
\begin{enumerate}
\item[$(1)$] $M$ is Newton non-degenerate.
%\item[(b)] $\I_r(M)$ is Newton non-degenerate and $\Gamma_+(\I_r(M))=\Gamma_+(J_M)$.
\item[$(2)$] $\overline{\I_r(M)}=J_M^0$.
\item[$(3)$] $\overline M=C^0(M)$.
\end{enumerate}
Then $\textnormal{(1)}\Rightarrow\textnormal{(2)}\Rightarrow\textnormal{(3)}$. Moreover, if $r=p$, then these implications become equivalences.
\end{cor}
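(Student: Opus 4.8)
The plan is to deduce the three implications from the corresponding implications in Theorem \ref{WDcentral} by translating each of the three conditions of Corollary \ref{icmcentral} into (a strengthening of) the corresponding condition in Theorem \ref{WDcentral}. The key observation, which I would state first, is the dictionary between ``term ideals'' and integral closure: for any ideal $I$ of $\O_n$ the ideal $I^0$ is a monomial ideal, hence $\overline{I^0}=(I^0)^0=I^0$, and moreover $\overline{\overline{I}}=\overline{I}$; combined with the fact that $\Gamma_+(I)=\Gamma_+(\overline I)$ this gives $\overline{I^0}=I^0$ and $I\subseteq\overline I\subseteq I^0$. In particular $I$ is Newton non-degenerate exactly when $\overline I=I^0$, and $\overline{H_M}=J_M^0$, $\overline{H_M}=\overline{J_M}$, as already recorded in the remark preceding the corollary.

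For $\textnormal{(1)}\Rightarrow\textnormal{(2)}$: if $M$ is Newton non-degenerate, then by Corollary \ref{carNNDm} it is integrally decomposable (so Theorem \ref{WDcentral}(1) holds) and each $M_i$ is Newton non-degenerate. Theorem \ref{WDcentral} then gives $\overline{\I_r(M)}=\overline{J_M}$. Since each $M_i$ is Newton non-degenerate, $\overline{M_i}=M_i^0$ for all $i$, whence $\overline{\prod_{i\in\tL}M_i}=\overline{\prod_{i\in\tL}\overline{M_i}}=\overline{\prod_{i\in\tL}M_i^0}=\overline{H_M^{(\tL)}}$ summand by summand; summing over $\tL\in\Lambda_M$ and taking integral closures yields $\overline{J_M}=\overline{H_M}=J_M^0$. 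Hence $\overline{\I_r(M)}=J_M^0$, which is (2). For $\textnormal{(2)}\Rightarrow\textnormal{(3)}$: assume $\overline{\I_r(M)}=J_M^0$. Since $J_M^0=\overline{H_M}=\overline{J_M}$, this is exactly condition (2) of Theorem \ref{WDcentral}, so by that theorem $\overline M=C(M)=Z(M)\cap(\overline{M_1}\oplus\cdots\oplus\overline{M_p})$. It remains to replace $\overline{M_i}$ by $M_i^0$ inside this intersection. I would argue $\overline{M_i}=M_i^0$ for every $i$ as follows: from $\overline{\I_r(M)}=J_M^0$ one gets, for each $\tL\in\Lambda_M$, the chain $\overline{\prod_{i\in\tL}M_i}\subseteq\overline{\I_r(M_\tL)}\subseteq\overline{\I_r(M)}=J_M^0$, while the reverse containment $\prod_{i\in\tL}M_i^0\subseteq J_M^0$ combined with $\prod_{i\in\tL}M_i\subseteq\prod_{i\in\tL}M_i^0$ forces $\overline{\prod_{i\in\tL}M_i}=\overline{\prod_{i\in\tL}M_i^0}$, and since the latter equals $\overline{\I_r(M_\tL)}$ when $\overline{M_i}=M_i^0$ one reduces, via Theorem \ref{resultatJLMS} applied to $M_\tL$, to the Newton non-degeneracy of each $M_i$; this yields $\overline{M_i}=M_i^0$ and hence $C(M)=C^0(M)$, so $\overline M=C^0(M)$.

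For the last sentence, suppose $r=p$. Then $\Lambda_M$ consists of the single tuple $(1,\dots,p)$, $Z(M)=\O_n^p$, $C(M)=\overline{M_1}\oplus\cdots\oplus\overline{M_p}$ and $C^0(M)=M_1^0\oplus\cdots\oplus M_p^0$, and $\I_r(M)=\I_p(M)$, $J_M=M_1\cdots M_p$, $J_M^0=M_1^0\cdots M_p^0$. In this case Theorem \ref{resultatJLMS}, specifically the equivalence of its conditions (1), (2), (3), directly identifies (1), (2), (3) of the present corollary with each other, so the implications are equivalences. I would simply invoke Theorem \ref{resultatJLMS} for this part rather than re-deriving anything.

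The main obstacle I anticipate is the $\textnormal{(2)}\Rightarrow\textnormal{(3)}$ step in the submaximal-rank case, precisely the claim that $\overline{\I_r(M)}=J_M^0$ forces each $M_i$ to be Newton non-degenerate (equivalently $\overline{M_i}=M_i^0$), which is what is needed to replace $C(M)$ by $C^0(M)$. One has $M_i\subseteq\overline{M_i}\subseteq M_i^0$ automatically, and the hypothesis controls products $\prod_{i\in\tL}M_i$ only up to integral closure and only for $\tL\in\Lambda_M$; extracting non-degeneracy of the individual factors will require carefully using that for $\tL\in\Lambda_M$ the submodule $M_\tL$ has rank $r$ in $\O_n^r$ and then invoking the equivalence $(2)\Leftrightarrow(3)$ of Theorem \ref{resultatJLMS} together with the multiplicativity of Newton polyhedra under products ($\Gamma_+(IJ)=\Gamma_+(I)+\Gamma_+(J)$) and the fact that a product of ideals is Newton non-degenerate iff each factor is. I would handle this by first passing to $M_\tL$, where the maximal-rank theorem applies verbatim, and only afterwards assembling the global statement.
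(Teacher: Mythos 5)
Your $(1)\Rightarrow(2)$ step and your treatment of the case $r=p$ via Theorem \ref{resultatJLMS} are correct and in line with the paper's one-line attribution to Corollary \ref{carNNDm} and Theorem \ref{WDcentral}. A small inaccuracy first: the identity $\overline{H_M}=\overline{J_M}$ is \emph{not} recorded in the paper and is false in general (the remark only gives $\overline{H_M}=J_M^0$, and $\overline{J_M}\subseteq J_M^0$ can be strict). Under hypothesis $(2)$ this is harmless, because $\I_r(M)\subseteq J_M\subseteq H_M\subseteq \overline{H_M}=J_M^0$, so $\overline{\I_r(M)}=J_M^0$ does force $\overline{\I_r(M)}=\overline{J_M}$, and Theorem \ref{WDcentral} then yields $\overline M=C(M)$.

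The genuine gap is in how you pass from $\overline M=C(M)$ to $\overline M=C^0(M)$ when $r<p$. You need $C^0(M)\subseteq C(M)$, and for this you claim that $(2)$ forces $\overline{M_i}=M_i^0$ for every $i$. Your derivation is invalid: the first inclusion in your chain, $\overline{\prod_{i\in\tL}M_i}\subseteq \overline{\I_r(M_\tL)}$, goes the wrong way --- the true containment is $\I_r(M_\tL)\subseteq \prod_{i\in\tL}M_i$, and the reverse inclusion up to integral closure is exactly integral decomposability (Proposition \ref{WDprop}), which is not known to follow from $(2)$; indeed this is essentially the open implication the authors explicitly leave unresolved. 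Moreover the subsequent step, that $\prod_{i\in\tL}M_i\subseteq\prod_{i\in\tL}M_i^0\subseteq J_M^0$ ``forces'' $\overline{\prod_{i\in\tL}M_i}=\overline{\prod_{i\in\tL}M_i^0}$, is a non sequitur: both ideals lying inside $J_M^0$ says nothing about their closures coinciding. (You also invoke that Newton non-degeneracy of a product implies that of each factor, which is not in the paper and would itself need proof.) None of this detour is needed. The intended argument is to rerun the proof of $(2)\Rightarrow(3)$ of Theorem \ref{WDcentral} with $M_i^0$, $H_M$, $J_M^0$ in place of $\overline{M_i}$, $J_M$, $\overline{J_M}$: given $h\in C^0(M)$, any nonzero $r$-minor of $[M\vert h]$ with row set $\tL$ has $\tL\in\Lambda_M$ by Lemma \ref{ranks} (using $h\in Z(M)$), and that minor lies in $\prod_{i\in\tL}M_i^0\subseteq H_M\subseteq\overline{H_M}=J_M^0=\overline{\I_r(M)}$; hence $\I_r(M+\O_n h)\subseteq\overline{\I_r(M)}$ and $h\in\overline M$ by Theorem \ref{icm}. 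This proves $(2)\Rightarrow(3)$ directly, with no non-degeneracy claim about the individual row ideals $M_i$.
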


\begin{rem}\begin{enumerate}
\item The implication $\textnormal{(3)}\Rightarrow\textnormal{(2)}$ in Corollary \ref{icmcentral} does not hold in general, as shown in
Example \ref{IrJMcb}. Analogous to Theorem \ref{WDcentral}, in a wide variety of examples of modules
$M\subseteq \O_2^3$ with $\rank(M)=2$, we have checked that $M$ is Newton non-degenerate whenever
$\overline{\I_r(M)}=J_M^0$. However we have not still found a proof or a counterexample of the implication
$\textnormal{(2)}\Rightarrow\textnormal{(1)}$ of Corollary \ref{icmcentral} in general.

\item We remark that the advantage of Corollary \ref{icmcentral} over Theorem \ref{WDcentral} is that it is usually easy to verify if a module is Newton non-degenerate via Theorem \ref{resultatJLMS}. Moreover, $C^0(M)$  admits a faster computation than $C(M)$ as we can use convex-geometric methods to compute the integral closure of monomial ideals.%(see \cite{BK}, Algorithm \ref{theAlg}).
\end{enumerate}\end{rem}

In the following example we use Corollary \ref{icmcentral} to compute the integral closure of a family of modules.

\begin{ex}\label{exCzero}
Let us consider the submodule $M\subseteq \O_2^3$ generated by the columns of the following matrix:
$$
\left[\begin{matrix}
x^a & xy & y^a \\
y^a & x^a & xy \\
x^a+y^a & xy+x^a & y^a+xy
\end{matrix}\right],
$$
where $a\in \Z_{\geq 2}$. We remark that $\rank(M)=2$. Let $J=( x^a, xy, y^a)$. The ideal $J$ is integrally closed and
$M_1^0=M_2^0=M_3^0=J$.
An elementary computation shows that $\I_2(M)=(xy^{a+1}-x^{2a}, x^{a+1}y-y^{2a}, x^2y^2)$ and that
$\I_2(M)$ is Newton non-degenerate. Moreover $J_M^0=\overline{( x^{2a}, x^2y^2, y^{2a})}$
and then $\overline{\I_2(M)}=J_M^0$, since $\Gamma_+(\I_2(M))=\Gamma_+(J_M^0)$.
Therefore, by Corollary \ref{icmcentral}, we conclude that $\overline M=C^0(M)=C(M)$.
Given any element $h=(h_1,h_2,h_3)\in \O_2^3$, we have that $\rank(M)=\rank(M+\O_2  h)$ if and only if $h_3=h_1+h_2$ (Lemma \ref{syzygies}). Therefore
$$
\overline M=\left\{ h\in M_1^0\oplus M_2^0 \oplus M_3^0: \rank(M)=\rank(M+\O_2 h) \right\}=\left\{ [h_1\hspace{0.4cm}h_2\hspace{0.4cm}h_1+h_2]^T\in \O_2^3: h_1,h_2\in J \right\}.
$$
Therefore, a minimal generating system of $\overline M$ is given by the columns of the following matrix
$$
\left[\begin{matrix}
x^a & xy & y^a & 0 & 0 & 0 \\
0 & 0 & 0 & x^a & xy & y^a\\
x^a & xy & y^a & x^a & xy & y^a
\end{matrix}\right].
$$
% Comments about the computation of $\overline M$ using Macaulay.
\end{ex}

\begin{ex}\label{IrJMcb}
Let $M$ be the submodule of $\O_2^2$ generated by the columns of the following matrix
$$
\left[\begin{matrix}
x^3 & x^2y \\
x(x+y) & y(x+y)
\end{matrix}\right].
$$
We observe that $\rank(M)=1$. The ideal $\I_1(M)$ is given by
$$
\I_1(M)=( x^3, x^2y, x(x+y), y(x+y))=M_1+M_2=J_M.
$$
We have $\Gamma_+(\I_1(M))=\Gamma_+(x^2, y^2)$. Let $\Delta$ denote the unique compact face of dimension $1$ of $\Gamma_+(x^2, y^2)$.
Hence $(x^3)_\Delta=0$, $(x^2y)_\Delta=0$, $(x(x+y))_\Delta=x(x+y)$ and $(y(x+y))_\Delta=y(x+y)$.
Since the line of equation $y=-x$ is contained in the set of solutions of the system
$x(x+y)=y(x+y)=0$, we conclude that $I_1(M)$ is Newton degenerate.
Therefore $\overline{\I_1(M)}\neq J^0_M$ (otherwise $I_1(M)$ would be a reduction of the monomial ideal $J_M^0$ and hence
$I_1(M)$ would be Newton non-degenerate, which
is not the case). Let us observe that $\overline{\I_1(M)}=(x(x+y), y(x+y))+\m_2^3$. Therefore, by Corollary \ref{IrMh} and applying
Singular \cite{Singular} (see Remark \ref{sobreSing1}), we deduce that $\overline M=M$.

By computing explicitly a generating system of $C^0(M)=Z(M)\cap (M_1^0\oplus M_2^0)$, we also obtain that $C^0(M)=M$ and hence
$C^0(M)=\overline M$. Then
$\textnormal{(3)}\nRightarrow\textnormal{(2)}$ in Corollary \ref{icmcentral}.
\end{ex}

\begin{rem}\label{intClRemark}
Let $R$ be a Noetherian normal domain. We note that the only general approach to compute the integral closure of an arbitrary submodule $M\subseteq R^p$  is to  compute the normalization $\overline{\R(M)}$ of the Rees algebra $\R(M)$. Indeed, by  \cite{Rees87} we have $[\overline{\R(M)}]_1=\overline{M}$ and this algebra can be computed via algorithms such as the one in \cite{DeJong}, which is implemented in Macaulay2 under the command {\tt integralClosure}.

We note that  Theorem \ref{WDcentral} and Corollary \ref{icmcentral} %, and Algorithm \ref{theAlg}
can be used to compute the effectively  the  integral closure of integrally decomposable modules. Other algorithms that compute integral closures of modules under special conditions can be found in the literature (see for instance \cite[9.23]{V}).
\end{rem}

The following two examples are motivated by Example 5.8 of \cite{Kod}.

\begin{ex}\label{CMnotID}
Let us consider the submodule $M\subseteq \O_2^3$ generated by the columns of the following matrix
$$
\left[\begin{matrix}
x^2 & y   & 0 \\
0   & x   & y^2 \\
x^2 & x+y & y^2 \\
\end{matrix}\right].
$$

The rank of $M$ is $2$ and $\I_2(M)=(x^3, x^2y^2, y^3)$. Thus $\overline{\I_2(M)}=\m_2^3$. By Corollary \ref{IrMh}, we have
$\overline M=Z(M)\cap A(M)$, where
\begin{equation}\label{MbarraZM}
A(M)=\left\{h=[h_1\hspace{0.3cm}h_2\hspace{0.3cm}h_3]^T\in\O_2^3: \I_2(M,h)\subseteq \m_2^3  \right\}.
\end{equation}
In general, the submodule $Z(M)$ can be computed by using Singular \cite{Singular}, as explained in Remark \ref{sobreSing1}. In this case it is immediate to see that
$$
Z(M)=\left[\begin{matrix}
1 & 0   \\
0 & 1  \\
1 & 1 \\
\end{matrix}\right].
$$
%Let $A(M)=\left\{h=[h_1\hspace{0.3cm}h_2\hspace{0.3cm}h_3]^T\in\O_2^3: \I_2(M,h)\subseteq \m_2^3  \right\}$.  %$A(M)$ is generated by all elements
%$h=[h_1\hspace{0.4cm}h_2]^T\in\O_2^2$ such that
%$x^2h_2, yh_2-xh_1, y^2h_1, x^2(h_3-h_1), yh_3-(x+y)h_1, xh_3-(x+y)h_2, y^2(h_3-h_2)\in \m_2^3$.
In (\ref{MbarraZM}) the minors of size $2$ of the matrix $(M,h)$ are
$x^2h_2, yh_2-xh_1, y^2h_1, x^2(h_3-h_1), yh_3-(x+y)h_1, xh_3-(x+y)h_2$ and $y^2(h_3-h_2)$. Then
$A(M)$ is equal to the intersection of the following submodules of $\O_2^3$:
\begin{align*}
N_1&=\left\{h=[h_1\hspace{0.3cm}h_2\hspace{0.3cm}h_3]^T\in\O_2^3:  x^2h_2        \in \m_2^3 \right\}\\
N_2&=\left\{h=[h_1\hspace{0.3cm}h_2\hspace{0.3cm}h_3]^T\in\O_2^3:  yh_2-xh_1     \in \m_2^3 \right\}\\
N_3&=\left\{h=[h_1\hspace{0.3cm}h_2\hspace{0.3cm}h_3]^T\in\O_2^3:  y^2h_1        \in \m_2^3 \right\}\\
N_4&=\left\{h=[h_1\hspace{0.3cm}h_2\hspace{0.3cm}h_3]^T\in\O_2^3:  x^2(h_3-h_1)  \in \m_2^3 \right\}\\
N_5&=\left\{h=[h_1\hspace{0.3cm}h_2\hspace{0.3cm}h_3]^T\in\O_2^3:  yh_3-(x+y)h_1 \in \m_2^3 \right\}\\
N_6&=\left\{h=[h_1\hspace{0.3cm}h_2\hspace{0.3cm}h_3]^T\in\O_2^3:  xh_3-(x+y)h_2 \in \m_2^3 \right\}\\
N_7&=\left\{h=[h_1\hspace{0.3cm}h_2\hspace{0.3cm}h_3]^T\in\O_2^3:  y^2(h_3-h_2)  \in \m_2^3 \right\}.
\end{align*}
Each of the above submodules can be computed with Singular. For instance, to obtain a generating system of $N_5$ we can
use the following procedure. Let $S$ denote the quotient ring $\O_2/\m_2^3$ and let us consider the submodule
of $S^3$ given by $\syz_S(-x-y, 0, y)=\{(g_1,g_2, g_3)\in S^3: (-x-y)g_1+yg_3=0\}$. Once we have obtained a matrix of generators of
$\syz_S(-x-y, 0, y)$ with Singular, if $B$ is any submodule of $\O_2^3$ whose image in $S^3$ generates $\syz_S(-x-y, 0, y)$, then
$N_5=B+ (\m_2^3\oplus \m_2^3\oplus \m_2^3)$. Therefore it follows that $N_5$ is generated by the columns of the matrix
$$
\left[\begin{matrix}
y^2 & y^2 & xy-y^2 & x^2-xy+y^2 & y   & 0 \\
0   & 0   & y^2    & 0          & 0   & 1    \\
0   & y^2 & -y^2   & y^2        & x+y & 0
\end{matrix}\right].
$$

By computing a minimal generating system of $Z(M)\cap N_1\cap \cdots \cap N_7$, it follows that
$$
\left[\begin{matrix}
x^2 & xy & y^2 & y   & 0 \\
0   & 0  & 0   & x   & y^2  \\
x^2 & xy & y^2  & x+y & y^2 \\
\end{matrix}\right].
$$
We remark that $\overline M_1=M_1$, $\overline M_2=M_2$ and $\overline M_3=(x+y)+\m_2^2$.
Therefore, a computation with Singular shows that the module
$C(M)$, which is defined as $Z(M)\cap (\overline{M_1}\oplus\overline{M_2}\oplus \overline{M_3})$, is equal to $\overline M$.

However we have the strict inclusion $\overline{\I_2(M)}\subseteq \overline{J_M}$ in this case, since $J_M=\m_2^2$.
Hence we have $\textnormal{(3)}\nRightarrow\textnormal{(2)}$ in Theorem \ref{WDcentral}.

The inequality $\overline{\I_2(M)}\neq \overline{J_M}$ implies that $M$ is not integrally decomposable, by Theorem \ref{WDcentral}. Actually, none of the submodules $\overline{M_{\{1,2\}}}$, $\overline{M_{\{1,3\}}}$ and
$\overline{M_{\{2,3\}}}$ are integrally decomposable, by Proposition \ref{idcodfinita}, since
they $\delta(M_{1,2})=\delta(M_{1,3})=\delta(M_{2,3})=5$ and $e(M_{1,2})=e(M_{1,3})=e(M_{2,3})=8$.
\end{ex}

\begin{ex}
Let us consider the submodule $M\subseteq \O_2^2$ generated by the columns of the following matrix:
$$
\left[\begin{matrix}
x^a & y^b & 0 \\
0 & x^c & y^d \\
\end{matrix}\right],
$$
where $a,b,c,d\in\Z_{\geq 1}$. Let $I=\I_2(M)=(x^{a+c}, x^ay^d, y^{b+d})$. Since the ideals $M_1$ and $M_2$ are generated by monomials
we have, from Theorem \ref{resultatJLMS}, that
\begin{align}
\textnormal{$\overline M$ is decomposable} &\Longleftrightarrow \overline M=M_1^0\oplus M_2^0\nonumber\\
&\Longleftrightarrow\textnormal{$\overline M$ is Newton non-degenerate}\nonumber\\
&\Longleftrightarrow\textnormal{$I$ is Newton non-degenerate and $\Gamma_+(I)=\Gamma_+(M_1M_2)$}.\label{ulteq}
\end{align}
Therefore $\overline M$ is not decomposable if and only if $\Gamma_+(I)$ is strictly contained in $\Gamma_+(M_1M_2)$.
We see that $\Gamma_+(M_1M_2)=\Gamma_+(x^{a+c}, x^ay^d, y^{b+d},x^cy^b)$.
Let us observe that $\Gamma_+(I)=\Gamma_+(x^{a+c}, y^{b+d})$ if and only if $ad\geq bc$.

Let us suppose first that $ad\geq bc$. Then
$\Gamma_+(I)$ is strictly contained in $\Gamma_+(M_1M_2)$ if and only if $(c,b)$ lies below the line determined
by the two vertices of $\Gamma_+(I)$, which is to say that $ad>bc$.

If $ad<bc$, then the Newton boundary of $\Gamma_+(I)$ is equal to the union of two segments and $(c,b)$ belongs to the interior
of $\Gamma_+(I)$. Hence $\Gamma_+(I)=\Gamma_+(M_1M_2)$ and this implies that $\overline M$ is decomposable by (\ref{ulteq}).

Thus we have shown that $\overline M$ is not decomposable if and only if $ad>bc$. In this case, we have $\Gamma_+(I)=
\Gamma_+(x^{a+c}, y^{b+d})$. Let $\w=(b+d, a+c)$. By Corollary \ref{IrMh} we obtain that
\begin{align}
\overline M=\big\{h=[h_1\hspace{0.4cm}h_2]^T\in\O_2^2:\hspace{0.2cm} &d_\w(x^ah_2)\geq (a+c)(b+d),\label{embar}\\
& d_\w(y^bh_2-x^ch_1)\geq (a+c)(b+d) \hspace{0.5cm}\textnormal{and}\nonumber\\
&d_\w(y^dh_1)\geq (a+c)(b+d)\big\}.\nonumber
\end{align}
Once positive integer values are assigned to $a,b,c,d$, it is possible to obtain a generating system of $\overline M$ with Singular \cite{Singular}
by following an analogous procedure as in Example \ref{CMnotID}.
\end{ex}

In the following example we show an example of a non-decomposable integrally closed submodule of $\O_2^2$ of rank $2$ whose ideal
of maximal minors is not simple (that is, it is factorized as the product of two proper integrally closed ideals).  
%counterexample to a question raised
%by Kodiyalam in \cite[p.\,3572]{Kod}.

\begin{ex}\label{deKod}
Let $M$ be the submodule of $\O_2^2$ generated by the columns of the following matrix:
$$
\left[\begin{matrix}
x^5 & xy   & y^5 \\
y^2 & x+y   & y^2 \\
\end{matrix}\right].
$$

We observe that $e(M_1)=10$, $e(M_2)=2$ and $e(M_1, M_2)=2$. Therefore $\delta(M)=14$. However $e(M)=22$.
Then $\overline M$ is not decomposable by Proposition \ref{idcodfinita}. In particular, since $\overline M$ is a submodule of
$\O_2^2$, $\overline M$ does not split as the direct sum of two proper ideals of $\O_2$. Let $I=\I_2(M)=( -xy^3+xy^5+y^6, -x^5y^2+y^7, -xy^3+x^6+x^5y)$.
By Corollary \ref{IrMh} it follows that
$$
\overline M=\left\{h\in\O_2^2: \I_2(M+\O_2 h)\subseteq \overline{\I_2(M)}\right\}.
$$

An easy computation shows that $I$ is Newton non-degenerate and $\Gamma_+(I)=\Gamma_+(x^6, xy^3, y^6)$.
The ideal generated by all monomials $x^{k_1}y^{k_2}$ such that $(k_1,k_2)\in\Gamma_+(I)$ is
$J=(x^6, x^5y, x^3y^2, xy^3, y^6)$. Hence $\overline I=J$ and this implies that
$$
\overline M=\left\{h=[h_1\hspace{0.4cm}h_2]^T\in\O_2^2: x^5h_2-y^2h_1,\, y^5h_2-y^2h_1,\,\, xyh_2-(x+y)h_1\in J\right\}.
$$
Hence $\overline M=N_1\cap N_2 \cap N_3$, where
\begin{align*}
N_1&=\left\{h=[h_1\hspace{0.3cm}h_2]^T\in\O_2^3:  x^5h_2-y^2h_1       \in J \right\}\\
N_2&=\left\{h=[h_1\hspace{0.3cm}h_2]^T\in\O_2^3:  y^5h_2-y^2h_1    \in J \right\}\\
N_3&=\left\{h=[h_1\hspace{0.3cm}h_2]^T\in\O_2^3:  xyh_2-(x+y)h_1          \in J \right\}.
\end{align*}

As in Example \ref{CMnotID}, using Singular \cite{Singular} we obtain that
\begin{align*}
N_1&=\left[\begin{matrix}
y^4 & x^3 & xy & 0 & 0 \\
0 & 0 & 0 & y & x
\end{matrix}\right] \hspace{2cm}
N_2
=\left[\begin{matrix}
y^4 & x^3 & xy & y^3  \\
0 & 0 & 0 & 1
\end{matrix}\right] \\
N_3&=
\left[\begin{matrix}
y^5 & x^4y-x^3y^2+x^2y^3-xy^4 & x^5-x^4y+x^3y^2-x^2y^3+xy^4 &x^2y^2-xy^3 & 0 & xy \\
0 & 0 & 0 & 0 & y^2 & x+y \\
\end{matrix}\right].
\end{align*}
Using Singular again, we have $N_3\subseteq N_1\cap N_2$. Therefore $\overline M=N_3$.
As we have discussed before, $\overline M$ is not decomposable and obviously it is integrally closed. However we have that
$$
\I_2(\overline M)=\overline{\I_2(M)}=(x^6, x^5y, x^3y^2, xy^3, y^6)=(x, y^3)(x^5, x^4y, x^2y^2, y^3).
$$
That is, the ideal $\I_2(\overline M)$ is not simple.
We also refer to \cite{Hayasaka} for another type and wide class of examples of integrally closed
and non-decomposable submodules
$N\subseteq \O_2^2$ of rank $2$ for which the ideal $\I_2(N)$ is not simple (these examples are motivated 
by a question raised by Kodiyalam in \cite[p.\,3572]{Kod}
about the converse of Theorem 5.7 of \cite{Kod}).
\end{ex}

%%%%%%%%%%%%%%%%%%%%%%%%%%%%%%%%%%%%%%%%%%%%%%%%%%%%%%%%%%%%%%%%%%%%%%%%%%%%%%%%%%%%%%%%%%%%%%%%%%%%%%%%%%%%%%%%%%
%\aci

%Inspired by Definition \ref{NNDm} (a) we also introduce the following notion.

%\begin{defn}
%Let $M$ be a submodule of $\O_n^p$ and let $[m_{ij}]\in M_{p\times m}(\O_n)$ be a presentation matrix of $M$, where $p\leq m$. Let $r=\rank(M)$.
%We say that $M$ is {\it $*$-Newton non-degenerate} $\*$ when
%$$
%\big\{x\in \C^n: \rank[(m_{ij})_{\Delta_i}]<r\big\}\subseteq\big\{x\in\C^n: x_1\cdots x_n=0\big\},
%$$
%for any compact face $\Delta=\Delta_1+\cdots+\Delta_p$ of $\Gamma_+(M_1)+\cdots+\Gamma_+(M_p)$.
%\end{defn}

%\begin{prop}
%Let $M\subseteq \O_n^p$ be a submodule, $r=\rank(M)$. Let $h_1,\dots, h_m$ be the non-zero minors of order $r$ of $M$.
%Then $M$ is Newton non-degenerate if and only if $(h_1,\dots, h_m)$ is a joint reduction of the family of ideals
%$(N_1,\dots, N_m)$, where $N_i=\prod_{j\in \r_M(h_i)}M_j^0$, for all $i=1,\dots, m$.
%\end{prop}

%\begin{proof}
%$\*$
%\end{proof}

%\aci
%%%%%%%%%%%%%%%%%%%%%%%%%%%%%%%%%%%%%%%%%%%%%%%%%%%%%%%%%%%%%%%%%%%%%%%%%%%%%%%%%%%%%%%%%%%%%%%%%%%%%%%%%%%%%%%%%%%%%%%%%%%%%%

%\section{Newton non-degenerate modules on regular sequences}%%% En la línea de Kiyek-Stückrad en Revista Matemática Iberoamericana, sección breve

%\cite{KS}

%%%%%%%%%%%%%%%%%%%%%%%%%%%%%%%%%%%%%%%%%%%%%%%%%%%%%%%%%%%%%%%%%%%%%%%%%%%%%%%%%%%%%%%%%%%%%%%%%%%%%%%%%%%%%%%%%%%%%%%%%%%%%%

%\section{Applications}
%%%%%%%%%%%%%%%%%%%%%%%%%%%%%%%%%%%%%%%%%%%%%%%%%%%%%%%%%%%%%%%%%%%%%%%%%%%%%%%%%%%%%%%%%%%%%%%%%%%%%%%%%%%%%%%%%%%%%%%%%%%%%%

%\vspace{0.5cm}

\section*{ Acknowledgements} This work started during the stay of the authors at the
Mathematisches Forschungsinstitut Oberwolfach in June 2018.
The authors wish to thank this institution for hospitality and financial support.
The first author also acknowledges the Department of Mathematical Sciences of New Mexico State University
(Las Cruces, NM, USA) also for hospitality and financial support. We also acknowledge Prof. F. Hayasaka for informing us about
the existence of his preprint \cite{Hayasaka}. 
 The authors would like to thank the referees for their helpful  suggestions and comments.

%\newpage
%%%%%%%%%%%%%%%%%%%%%%%%%%%%%%%%%%%%%%%%%%%%%%%%%%%%%%%%%%%%%%%%%%%%%%%%%%%%%%%%%%%%%%%%%%%%%%%%%%%%%%%%%%%%%%%%%%%%%%%%%%%%%%%


\begin{thebibliography}{AA}

\bibitem{Bivia03} C. Bivi\`a-Ausina,
\newblock{\it The analytic spread of monomial ideals},
\newblock Comm. Algebra {\bf 31} (2003), 3487--3496.
%------------------------------------------------------------------------------------------------

\bibitem{BiviaJLMS} C.\,Bivi\`a-Ausina,
\newblock{\it The integral closure of modules, Buchsbaum-Rim multiplicities and Newton polyhedra},
\newblock J. London Math. Soc. (2) {\bf 69} (2004), 407--427.
%------------------------------------------------------------------------------------------------

\bibitem{BiviaMRL} C.\,Bivià-Ausina,
\newblock{\it Joint reductions of monomial ideals and multiplicity of complex analytic maps},
\newblock Math. Res. Lett. {\bf 15}, No. 2 (2008), 389--407.
%------------------------------------------------------------------------------------------------

\bibitem{BFS} C.\,Bivià-Ausina, T.\,Fukui \and M.J.\,Saia,
\newblock{\it Newton graded algebras and the codimension of non-degenerate ideals},
\newblock Math. Proc. Cambridge Philos. Soc. {\bf 133} (2002), 55--75.
%------------------------------------------------------------------------------------------------

\bibitem{BS1} A.\,L.\,Branco\,\,Correia and S.\,Zarzuela,
\newblock{\it On equimultiple modules},
\newblock Comm. Algebra {\bf 37}, No. 6 (2009), 1949--1976.
%------------------------------------------------------------------------------------------------

\bibitem{BS2} A.\,L.\,Branco\,\,Correia and S.\,Zarzuela,
\newblock{\it On the depth of the Rees algebra of an ideal module},
\newblock J. Algebra {\bf 323}, (2010), 1503--1529.
%------------------------------------------------------------------------------------------------

\bibitem{BUV} J.\,Brennan, B.\,Ulrich \and W.V.\,Vasconcelos,
\newblock{\it The Buchsbaum-Rim polynomial of a module},
\newblock J. Algebra {\bf 241}, No. 1 (2001), 379–392.
%------------------------------------------------------------------------------------------------

\bibitem{BK} W.\,Bruns and R.\,Koch,
\newblock{\it Computing the integral closure of an affine semigroup},
\newblock Univ. Iagel. Acta Math. {\bf 39}, (2001), 59--70.
%------------------------------------------------------------------------------------------------

\bibitem{BH} W.\,Bruns and J.\,Herzog,
\newblock Cohen-Macaulay Rings,
\newblock Cambridge Studies in Advanced Mathematics {\bf 39}, Cambridge, Cambridge University Press, 1993.
%------------------------------------------------------------------------------------------------

\bibitem{BV} W.\,Bruns and U.\,Vetter,
\newblock Determinantal rings,
\newblock Lecture Notes in Mathematics {\bf 1327}, Springer-Verlag, Berlin, 1988.
%------------------------------------------------------------------------------------------------

\bibitem{BRim} D.A.\,Buchsbaum and D.S.\,Rim,
\newblock{\it A generalized Koszul complex II, depth and multiplicity},
\newblock Trans. Amer. Math. Soc. {\bf 111} (1964), 197--224.
%------------------------------------------------------------------------------------------------

\bibitem{CL} C-Y.\,J.\,Chan and J-C.\,Liu,
\newblock{\it A note on reductions of monomial ideals in $k[x,y]_{(x,y)}$},
\newblock Commutative algebra and its connections to geometry. Contemp. Math. {\bf 555} (2011), 13--34.
%------------------------------------------------------------------------------------------------

\bibitem{CQ} V.\,Crispin Qui\~nonez,
\newblock{\it Minimal reductions of monomial ideals},
\newblock Research Reports in Mathematics, {\bf 10} (2004), Department of Mathematics, Stockholm University, available at
http://www.math.su.se/reports/2004/10/.
%------------------------------------------------------------------------------------------------

\bibitem{Singular} W. Decker, G.-M. Greuel, G. Pfister and H. Sch{\"o}nemann,
\newblock {\sc Singular} {4-1-2} --- {A} computer algebra system for polynomial computations.
\newblock {http://www.singular.uni-kl.de} (2019).
%------------------------------------------------------------------------------------------------

\bibitem{DeJong} T.\,De Jong,
\newblock{\it An algorithm for computing the integral closure},
\newblock J. Symbolic Computation {\bf 26} (1998), 273--277.
%------------------------------------------------------------------------------------------------

\bibitem{E} D.\,Eisenbud,
\newblock Commutative algebra with a view toward algebraic geometry,
\newblock Graduate Texts in Mathematics {\bf 150}, Springer-Verlag (2004).
%------------------------------------------------------------------------------------------------

\bibitem{EHU} D.\,Eisenbud, C.\,Huneke and B.\,Ulrich,
\newblock{\it What is the Rees algebra of a module},
\newblock Proc. Amer. Math. Soc. {\bf 131} (2002), 701--708.
%------------------------------------------------------------------------------------------------

\bibitem{Gaffney92} T.\,Gaffney,
\newblock{\it Integral closure of modules and Whitney equisingularity},
\newblock Invent. Math. {\bf 107} (1992), 301--322.
%------------------------------------------------------------------------------------------------

\bibitem{Gaffney96} T.\,Gaffney,
\newblock{\it Multiplicities and equisingularity of ICIS germs},
\newblock Invent. Math. {\bf 123} (1996), 209--220.
%------------------------------------------------------------------------------------------------

\bibitem{GTW} T.\,Gaffney, D.\,Trotman and L.\,Wilson,
\newblock{\it Equisingularity of sections, $(t^r)$ condition, and the integral closure of modules},
\newblock J. Algebraic Geom. {\bf 18} (2009), no. 4, 651--689.
%------------------------------------------------------------------------------------------------

\bibitem{GS} D.\,Grayson and M.\,Stillman,
\newblock Macaulay2, a software system for research in algebraic geometry.
\newblock Available at http://www.math.uiuc.edu/Macaulay2.
%------------------------------------------------------------------------------------------------

\bibitem{Harts} R.\,Hartshorne,
\newblock Algebraic geometry,
\newblock Springer-Verlag, New York-Heidelberg, 1977. Graduate Texts in Mathematics, No. 52.
%------------------------------------------------------------------------------------------------

\bibitem{Hayasaka} F.\,Hayasaka,
\newblock{\it Constructing indecomposable integrally closed modules over a two-dimensional regular local ring},
\newblock J. Algebra {\bf 556} (2020), 879–-907.
%------------------------------------------------------------------------------------------------

\bibitem{Hayasaka2} F.\,Hayasaka,
\newblock{\it Asymptotic vanishing of homogeneous components of multigraded modules and its applications},
\newblock J. Algebra {\bf 513} (2018), 1–-26.
%------------------------------------------------------------------------------------------------

\bibitem{HM} J.-P. Henry \and M.\,Merle,
\newblock{\it Conormal space and Jacobian modules. A short dictionary}.
\newblock Singularities (Lille, 1991), 147–174, London Math. Soc. Lecture Note Ser., 201, Cambridge Univ. Press, Cambridge, 1994.
%------------------------------------------------------------------------------------------------

\bibitem{HIO} M.\,Herrmann, S.\,Ikeda, and U.\,Orbanz,
\newblock \emph{Equimultiplicity and Blowing Up. An Algebraic Study. With an
Appendix by B. Moonen},
\newblock Springer-Verlag, Berlin (1988).
%------------------------------------------------------------------------------------------------

\bibitem{HS} C.\,Huneke and I.\,Swanson,
\newblock Integral Closure of Ideals, Rings, and Modules.
\newblock London Math. Soc. Lecture Note Series {\bf 336} (2006), Cambridge
University Press.
%------------------------------------------------------------------------------------------------

\bibitem{Hyry99} E.\,Hyry,
\newblock{\it The diagonal subring and the Cohen-Macaulay property of a multigraded ring},
\newblock Trans. Amer. Math. Soc. {\bf 351} (1999), 2213--2232
%------------------------------------------------------------------------------------------------

\bibitem{Hyr02} E.\,Hyry,
\newblock{\it Cohen-Macaulay multi-Rees algebras},
\newblock Compositio Math. {\bf 130} (2002), 319-343.
%------------------------------------------------------------------------------------------------

\bibitem{Katz} D.\,Katz,
\newblock{\it Reduction criteria for modules},
\newblock Comm. Algebra {\bf  23} (12) (1995), 4543--4548.
%------------------------------------------------------------------------------------------------

%\bibitem{Katz} D.\,Katz,
%\newblock{\it Note on multiplicities},
%\newblock Proc. Amer. Math. Soc. {\bf 104}, No. 4 (1988), 1021--1026.
%------------------------------------------------------------------------------------------------

\bibitem{Kirby} D.\,Kirby,
\newblock{\it On the Buchsbaum-Rim multiplicity associated with a matrix},
\newblock J. London Math. Soc. (2) {\bf 32} (1985), 57--61.
%------------------------------------------------------------------------------------------------

\bibitem{Kirby-Rees} D.\,Kirby and D.\,Rees,
\newblock{\it Multiplicities in graded rings II: Integral equivalence and the
Buchsbaum-Rim multiplicity},
\newblock Math. Proc. Cambridge Philos. Soc. {\bf 119} (3) (1996), 425--445.
%------------------------------------------------------------------------------------------------

\bibitem{Kirby-Rees2} D.\,Kirby and D.\,Rees,
\newblock{\it Multiplicities in graded rings. I. The general theory},
\newblock Commutative algebra: syzygies, multiplicities, and birational algebra.
Contemp. Math. {\bf 159} (1994),  209–-267.
%------------------------------------------------------------------------------------------------

%\bibitem{KS} K.\,Kiyek and J.\,Stückrad,
%\newblock{\it Integral closure of monomial ideals on regular sequences},
%\newblock Rev. Mat. Iberoamericana {\bf 19}, No. 2 (2004), 483–-508.
%------------------------------------------------------------------------------------------------

\bibitem{Kod} V.\,Kodiyalam,
\newblock{\it Integrally closed modules over two-dimensional regular local rings},
\newblock Trans. Amer. Math. Soc. {\bf 347}, No. 9 (1995), 3551–3573.
%------------------------------------------------------------------------------------------------

%\bibitem{K} A.G.\,Kouchnirenko,
%\newblock{\it Polyèdres de Newton et nombres de Milnor},
%\newblock Invent. Math. {\bf 32} (1976), 1--31.
%------------------------------------------------------------------------------------------------

\bibitem{LT} M.\,Lejeune and B.\,Teissier,
\newblock{\it Cl\^oture int\'egrale des id\'eaux et equisingularit\'e}, with an appendix by J.J. Risler.
\newblock Centre de Mathématiques, École Polytechnique (1974) and Ann. Fac. Sci. Toulouse Math. (6) {\bf 17}, No. 4 (2008), 781--859.
%------------------------------------------------------------------------------------------------

\bibitem{Lipman} J.\,Lipman,
\newblock {\it Equimultiplicity, reduction and blowing up},
\newblock in Commutative Algebra, Analytical Methods,
Edited by R. N. Draper, Lecture Notes in Pure and Applied
Mathematics \newblock Marcel Dekker, Inc., New York and Basel
(1982), 111--147.
%------------------------------------------------------------------------------------------------

\bibitem{Matsumura} H.\,Matsumura,
\newblock Commutative ring theory.
\newblock Cambridge studies in advanced mathematics 8, Cambridge University Press (1986).
%------------------------------------------------------------------------------------------------

%\bibitem{Northcott} D.G.\,Northcott,
%\newblock{Lessons on Rings, Modules and Multiplicities},
%\newblock Cambridge University Press, 1968.
%------------------------------------------------------------------------------------------------

%\bibitem{NR} D.\,G.\,Northcott and D.\,Rees,
%\newblock{\it Reductions of ideals in local rings},
%\newblock Proc. Cambridge Philos. Soc. {\bf 50} (1954), 145--158.
%------------------------------------------------------------------------------------------------

%\bibitem{OCarroll} L.\,O'Carroll
%\newblock{On two theorems concerning reductions in local rings},
%\newblock J. Math. Kyoto Univ. {\bf 27}, No. 1 (1987), 61--67.
%------------------------------------------------------------------------------------------------

\bibitem{Rees61} D.\,Rees,
\newblock{\it $a$-Transforms of local Rings and a theorem on multiplicities of ideals},
\newblock Math. Proc. Cambridge Philos. Soc. {\bf 57} (1961), 8--17.
%------------------------------------------------------------------------------------------------

%\bibitem{Rees2} D.\,Rees,
%\newblock{\it Generalizations of reductions and mixed multiplicities},
%\newblock J. London Math. Soc. (2) {\bf  29}  (1984), 397--414.
%------------------------------------------------------------------------------------------------

\bibitem{Rees87} D.\,Rees,
\newblock{\it Reduction of modules},
\newblock Math. Proc. Cambridge Philos. Soc. {\bf 101} (1987) 431--449.
%------------------------------------------------------------------------------------------------

%\bibitem{RS} D.\,Rees and J.\,Sally,
%\newblock{\it General elements and joint reductions},
%\newblock Mich. Math. J. {\bf  35}, No. 2 (1988), 241--254.
%------------------------------------------------------------------------------------------------

\bibitem{S}  M.J.\,Saia,
\newblock{\it The integral closure of ideals and the Newton filtration},
\newblock J. Algebraic Geom. {\bf 5} (1996), 1--11.
%------------------------------------------------------------------------------------------------

%\bibitem{Swanson} I.\,Swanson,
%\newblock{\it Mixed multiplicities, joint reductions and quasi-unmixed local rings},
%\newblock J. London Math. Soc. (2) {\bf  48}, No. 1 (1993), 1--14.
%------------------------------------------------------------------------------------------------

\bibitem{SUV} A.\,Simis, B.\,Ulrich, and W.\,V.\,Vasconcelos,
\newblock{\it Rees algebras of modules},
\newblock Proc. London Math. Soc. {\bf 87} (2003), 610--646.
%------------------------------------------------------------------------------------------------

\bibitem{Cargese} B.\,Teissier,
\newblock{\it Cycles évanescents, sections planes et conditions of Whitney},
\newblock Singularit\'es \`a Carg\`ese, Ast\'erisque, No. 7--8 (1973),
285--362.
%------------------------------------------------------------------------------------------------

\bibitem{TeissierIM} B.\,Teissier,
\newblock{\it Variétés polaires. I. Invariants polaires des singularités d'hypersurfaces},
\newblock Invent. Math. {\bf 40} (1977), No. 3, 267-–292.
%------------------------------------------------------------------------------------------------

\bibitem{T2} B.\,Teissier,
\newblock{\it Monomial ideals, binomial ideals, polynomial ideals},
\newblock Trends in commutative algebra, Math. Sci. Res. Inst. Publ., {\bf 51}, 211--246. Cambridge Univ. Press, Cambridge, 2004.
%------------------------------------------------------------------------------------------------

\bibitem{V2} W. Vasconcelos,
\newblock Computational Methods in Commutative Algebra and Algebraic Geometry. Algorithms and Computation in Mathematics, 2.
Springer-Verlag, Berlin, 1998.
%------------------------------------------------------------------------------------------------

\bibitem{V} W.\,Vasconcelos,
\newblock Integral closure. Rees algebras, multiplicities, algorithms.
\newblock Springer Monographs in Mathematics. Springer-Verlag, Berlin, 2005.
%------------------------------------------------------------------------------------------------

%%%%%%q%%%%%%%%%%%d%%%%%%%%%%%%%%%%e%%%%%%%%%%%%%%%%%%a%%%%%%%%%%%a%%%%%%%%%%%%%%t%%%%%%%%%a%%%%d%%%%%%v%%%%%%%%%%%%%%%%%%
\end{thebibliography}
\end{document}